\let\spacecal=\mathscr
\DeclareMathAlphabet{\mathpzc}{OT1}{pzc}{m}{it}
\definecolor{maroon}{rgb}{0.5, 0.0, 0.0}
\definecolor{brick}{rgb}{0.79, 0.25, 0.33}
\newcommand{\blue}[1]{\textcolor{blue}{#1}}
\newcommand{\marginnote}[1]{\relax}
\let\tempone\itemize
\let\temptwo\enditemize
\let\temponenum\enumerate
\let\temptwonum\endenumerate
\renewenvironment{itemize}{\tempone\addtolength{\itemsep}{0.2\baselineskip}}{\temptwo}
\renewenvironment{enumerate}{\temponenum\addtolength{\itemsep}{0.2\baselineskip}}{\temptwonum}
\let\tempsec\section
\renewcommand{\section}{\par\medskip\tempsec}
\newcommand{\qee}{\parbox{5pt}{\hfill}\hfill $\triangle$}
\newenvironment{rem}{\begin{remqee}}{\qee\end{remqee}}
\newtheorem{thm}{Theorem}[section] 
\newtheorem{corol}[thm]{Corollary}
\newtheorem{lemma}[thm]{Lemma} 
\newtheorem{prop}[thm]{Proposition}
\newtheorem{defin}[thm]{Definition}
\theoremstyle{definition}
\theoremstyle{remark}
\newtheorem{remqee}[thm]{Remark}
\newtheorem{example}[thm]{Example}
\newenvironment{remark}{\begin{remqee}}{\qee\end{remqee}}
\numberwithin{equation}{section}
\newcommand\Id{\operatorname{Id}}
\newcommand\Ima{\operatorname{Im}}
\newcommand\Spec{\operatorname{Spec}}
\newcommand\Proj{\operatorname{Proj}}
\newcommand\Ext{\operatorname{Ext}}
\newcommand\Hom{\operatorname{Hom}}
\newcommand\iso{\kern.35em{\raise3pt\hbox{$\sim$}\kern-1.1em\to}\kern.3em}
\newcommand\rest[2]{#1_{\vert #2}}
\newcommand\F{{\mathcal F}}
\newcommand\Oc{{\mathcal O}}
\newcommand\M{{\mathcal M}}
\newcommand\R{{\mathbb R}}
\newcommand\Z{{\mathbb Z}}
\newcommand\C{{\mathbb C}}
\newcommand\K{{\mathbb K}}
\newcommand\U{{\mathcal U}}
\newcommand\Dcal{{\mathcal D}}
\newcommand\Nc{{\mathcal N}}
\newcommand\Lcl{{\mathcal L}}
\newcommand\Ec{{\mathcal E}}
\newcommand\Ac{{\mathcal A}}
\newcommand\Ps{{\mathbb P}}
\newcommand\Xcal{{\spacecal X}}
\newcommand\Ycal{{\spacecal Y}}
\newcommand\Sc{{\spacecal S}}
\newcommand\Tc{{\spacecal T}}
\newcommand\Zc{{\spacecal Z}}
\newcommand\Wc{{\spacecal W}}
\newcommand\Hc{{\mathcal H}}
\newcommand\Ucal{{\spacecal U}}
\newcommand\Rcal{{\spacecal R}}
\newcommand\Ic{{\mathcal I}}
\newcommand\Jc{{\mathcal J}}
\newcommand\Cc{{\mathcal C}}
\newcommand\bR{{\mathbf R}}
\newcommand\bL{{\mathbf L}}
\newcommand\Ber{{{\mathcal B}er}}
\newcommand\Homsh{{{\mathcal H}om}}
\newcommand\Xf{{\mathfrak X}}
\newcommand\Sf{{\mathfrak S}}
\newcommand\mf{{\mathfrak m}}
\newcommand\af{{\mathfrak a}}
\newcommand\bfr{{\mathfrak b}}
\newcommand\nf{{\mathfrak n}}
\newcommand\Mf{{\mathfrak M}}
\newcommand\As{{\mathbb A}}
\newcommand\Bs{{\mathbb B}}
\newcommand\SSpec{\mathbb{S}\mathrm{pec}\,}
\newcommand\SProj{\mathbb{P}\mathrm{roj}\,}
\newcommand\Hs{{\mathbb H}}
\newcommand{\rra}{\rightrightarrows}
\newcommand\Extsh{{{\mathcal E}xt}}
\newcommand{\bos}{{\operatorname{bos}}}
\newcommand{\ord}{\operatorname{ord}}
\newcommand{\divi}{\operatorname{div}}
\newcommand{\ber}{\operatorname{ber}}
\begin{document}

\renewcommand{\leftmark}{\sc U.Bruzzo, D. Hern\'andez Ruip\'erez, Y. I. Manin}
 
\title[Supercycles, stable supermaps and SUSY Nori Motives]{SUPERCYCLES, STABLE SUPERMAPS AND SUSY NORI MOTIVES}
\author{Ugo Bruzzo, Daniel Hern\'andez Ruip\'erez, Yuri I.\, Manin}
 
\subjclass[2020]{Primary: 14C15; Secondary: 14M30, 14H10, 83E30} 
 \thanks{U.B.: Research partly supported  by the  Brazilian CNPq ``Bolsa de Produtividade em Pesquisa'' 313333/2020, by GNSAGA-INdAM, and by the PRIN project ``Moduli Theory and Birational Classification.'' DHR: Research partly supported by research projects  ``Espacios finitos y functores integrales'', MTM2017-86042-P (Ministerio de Econom\'{\i}a, Industria y Competitividad), and  ``STAMGAD, SA106G19'' (Junta de Castilla y Le\'on).}
 
\date{Revised \today}

\begin{abstract} 
We define stable supercurves and stable supermaps, and based on these notions we develop a theory of Nori motives for the category of stable supermaps of SUSY curves with punctures. This will require several preliminary constructions, including the development of a basic theory of supercycles.
\end{abstract}

\maketitle



\tableofcontents

\section{Introduction}
This is a revision of the paper \cite{BrHRMan} published in Algebraic Geometry and Physics, which incorporates the corrections we made in the  proofs, and an Erratum which is going to be published in the same journal. The corrections amount to some typos, and a partial reformulation of Remark \ref{rem:rifatto}, which however does not affect the rest of the paper.

 Motives were introduced by Grothendieck in a letter to Serre in 1964,  with the aim to introduce  an abelian category attached to algebraic varieties through whose objects all cohomology theories factorize. Pure motives are  attached to projective smooth varieties and mixed motives are attached  to all kinds of varieties, regardless of smoothness. In spite of the enormous progress made in the field, the theory is still quite conjectural. There are various constructions of   categories of motives, but their expected properties are often not proved, or depend on the proof of the standard conjectures. There are three candidates for an abelian category of mixed motives: the absolute Hodge motives of Deligne and Jansen,   the categories of Ayoub, and that of Nori (see the introduction of \cite{HuM-St17}). There are also candidates of triangulated categories of motives, due to Hanamura, Levine and Voevodsky, that one can somehow   think as the derived categories of the ``true'' category of motives.

 As remarked in \cite{HuM-St17} regarding the cohomology theories ``similar''  to singular cohomology (in a suitable sense), they all take values in rigid tensor categories. We then expect   the conjectural abelian category of motives to be a Tannakian category such that singular cohomology gives a faithful exact functor to the category of rational vector spaces. Nori started from there, and his category of motives is universal for all the cohomology theories compatible with singular cohomology, and has a natural functor from the categories of pairs $(X,Y)$ where is $Y$ is a closed subvariety of $X$ and it is compatible with the exact sequences given by triples $(Z,Y,X)$.
The basic combinatorial objects in the theory of Nori motives are categories of diagrams \cite{MaMar20}. In \cite{BoMa08} and \cite{Ma99} a slightly modified formalism refers to graphs, and we shall indeed use the formalism of \cite{BoMa08, Ma99} rather than \cite{MaMar20}.

An object of such a category, a graph $\tau$, is a family of structures  $(F_\tau,V_\tau,\partial_\tau,j_\tau)$, just sets or structured sets and maps between them in the simplest cases. Elements of $F_\tau$, resp.\,$V_\tau$, are called \emph{flags}, resp.\,\emph{vertices} of $\tau$. The map $\partial_\tau\colon F_\tau \to V_\tau$ associates to each flag a vertex, called its \emph{boundary}. The map $j_\tau\colon F_\tau \to F_\tau$ must satisfy the condition $j_\tau^2 = \Id$, the identity map of $F_\tau$ to itself. Pairs of flags $(f,j_\tau f)$ are called \emph{edges}, connecting boundary vertices of these two flags.
A morphism of graphs $\sigma \to \tau$ consists of two maps $F_\sigma \to F_\tau$, $V_\sigma\to V_\tau$ compatible with the $\partial$'s and $j$'s. 
To each (small) category $\Cc$ one can associate its graph $D(\Cc)$, whose vertices are objects of $\Cc$ and flags are morphisms $\ast\to X$ and $X \to\ast$, where $\ast$ runs over all objects of $\Cc$. According to \cite[Def.\, 0.1.1]{MaMar20}, edges, oriented from $X$ to $Y$, are represented by diagrams $X \to Z \to Y$, that is, by decompositions of morphisms from $X$ to $Y$, presented as a  composition  of two morphisms.
Functors between two categories produce morphisms between their diagrams.

After the  
the introduction of super analogues of moduli spaces of stable curves (see \cite{Del87,FKP20, BrHRPo20,BrHR21}), it became clear that super-analogues of motives   can and should be developed.  In particular, to embed the notion of supersymmetry  into the theory, one may like to consider   a  {\em category of SUSY motives},  which should contain at least motives of moduli spaces/stacks of stable supersymmetric (SUSY) curves. The construction of SUSY motives is related to the graphs associated with stable supermaps, which should be a generalization of SUSY graphs. The latter have been recently studied in~\cite{KeMaWu22}.

These notes introduce super-analogues of Nori motives (for the classical case, see \cite{HuM-St17}). In a further paper we plan to introduce and study the analogues of Beilinson motives, whose classical case is developed in \cite{CiDe19}.
The introduction of SUSY Nori motives requires a previous study of stable supercurves and supermaps; for that reason, a sizeable part of this paper is devoted to these topics.

In Section \ref{s:derham} we recall and develop some basic theory of de Rham and Hodge cohomology in the super setting, including some elements of the cohomology of integral superforms.  This material is included here  by   sake of completeness and   is not used anywhere else in the paper.

In Section \ref{s:supercycles} we define  supercycles, together with rational equivalence, flat pullbacks and proper push-forwards, while in Section  \ref{supercurves}
we give a definition of pre-stable and stable supersymmetric (SUSY) supercurves. In Section~\ref{s:stsupermaps} we introduce stable supermaps using the notion of supercycle and push-forward of supercycle previously given. We also prove that the category fibred in groupoids of stable supermaps is a superstack in the sense of~\cite{CodViv17}.

In Section \ref{s:Norimotives} we start with a general formalism of Nori geometric categories and Nori diagrams as it is sketched in \cite{MaMar20}, and then present its specialisation in the context of supergeometry.
 Moreover  in Section \ref{ss:effNori}\footnote{This section  was  added during the revision process, after Professor Y.I. Manin passed away on January 7, 2023.}  we present three geometrical representations of SUSY Nori diagrams on abelian categories, thus providing concrete examples of effective mixed SUSY Nori motives as   in Definition \ref{def:effNori}.

We use notation and definitions from \cite{BrHRPo20} and references therein.
All schemes and superschemes   are \emph{locally noetherian} and, \blue{starting from Subsection \ref{ss:CM},}  all superscheme morphisms are \emph{locally of finite type}, and therefore also {\em locally of finite presentation.}
Starting from Section \ref{s:derham}, and throughout the rest of the paper  we assume that superschemes are locally of finite type over a field, usually the complex numbers.
 
\section{Basic algebraic supergeometry}\label{s:supergeom}
In this Section we collect  some basic definitions in supergeometry.  We adopt the following convention: for every $\Z_2$-graded module $M$, we write $M_+$ for its even part and $M_-$ for its odd part, so that $M=M_+\oplus M_-$. 

\subsection{Superrings and superschemes}
A \emph{superring} $\As$ is a $\Z_2$-graded supercommutative ring such that  
  the ideal  $J_\As$ generated by   odd elements  is finitely generated. Under this condition the graded ring $Gr_{J_\As}(\As)=\bigoplus_{i\geq 0} J_\As^i/J_\As^{i+1}$ is a finitely generated  module over  the \emph{bosonic reduction}  $A=\As/J_\As$ of $\As$. 
We shall say that $\As$ is \emph{split} if there exists a finitely generated projective $A$-module $M$ such that $\As\simeq {\bigwedge}_AM$. 

The notion of \emph{noetherian superring} generalizes the usual one, i.e., every ascending chain of $\mathbb Z_2$-graded ideals stabilizes \cite{We09}. 

\begin{defin}
A   locally ringed superspace is a pair $\Xcal=(X,\Oc_{\Xcal})$, where $X$ is a topological space, and $\Oc_{\Xcal}$ is a sheaf of $\Z_2$-graded commutative rings  such that for every point $x\in X$ the  stalk $\Oc_{\Xcal,x}$ is a local superring. 
A morphism of locally ringed superspaces is a pair $(f,f^\sharp)$, where $f\colon X \to Y$ is  a continuous map, and  $f^\sharp\colon \Oc_{\Sc}\to f_\ast\Oc_{\Xcal}$ is a homogeneous morphism of graded commutative sheaves, such that for every point $x\in X$, the induced morphism of 
local superrings
$\Oc_{\Sc,f(x)}\to \Oc_{\Xcal,x}$ is local.
\end{defin} 
 
If  $\Jc_\Xcal=(\Oc_{\Xcal,-})^2\oplus\Oc_{\Xcal,-}$
is  the homogeneous ideal   generated by the odd elements, $\Oc_X:=\Oc_{\Xcal}/\Jc_\Xcal$ is a purely even sheaf of rings, and the locally ringed space $X=(X,\Oc_X)$ is the \emph{ordinary locally ringed space underlying} $\Xcal$,  also called the \emph{bosonic reduction of} $\Xcal$ and  {sometimes} denoted   $\Xcal_{\bos}$.  There is a closed immersion of locally ringed superspaces
 {$i\colon X \hookrightarrow \Xcal$.}
A  {locally ringed superspace} $\Xcal$ is said to be \emph{projected} if there exists a morphism of locally ringed superspaces $\rho\colon \Xcal \to X$ such that $\rho\circ i=\Id$. As in the case of superrings, we said that $\Xcal$ is \emph{split} when $\Oc_\Xcal\simeq \bigwedge_{\Oc_X}\Ec$ for a locally free sheaf $\Ec$ on $X$ which generates the ideal $\Jc$, so that $\Ec\simeq\Jc_\Xcal/\Jc_\Xcal^2$. Analogously, $\Xcal$ is called \emph{locally split} if it can be covered by split open locally ringed superspaces.

The group $\Gamma=\{\pm1\}$ acts on $\Oc_\Xcal$ by $f^{(-1)}=f_+-f_-$ and defines another locally ringed superspace $\Xcal/\Gamma=(X, \Oc_{\Xcal,+})$, called the \emph{bosonic quotient} of~$\Xcal$.

The \emph{superspectrum} of a superring $\As$ is 
{the} {locally ringed superspace}  {$\SSpec\As=(X,\Oc)$}, where $X$ is the spectrum of the bosonic reduction $A$ of $\As$,
and $\Oc$ is a sheaf of $\Z_2$-graded commutative rings defined as follows:    any non-nilpotent element $f\in\As$ defines in the usual way a \emph{basic open subset}
$D(f)\subset X$, and one defines $\Oc(D(f))=\As_f$, the localization of $\As$ at the multiplicative subsystem defined by $f$.
The locally ringed superspaces of this form are called \emph{affine superschemes}.

\begin{defin}\label{def:superscheme} \label{def:dimension}
A superscheme is a locally ringed superspace $\Xcal=(X,\Oc_{\Xcal})$ which is locally isomorphic to the superspectrum of a superring. 

A superscheme $\Xcal=(X,\Oc_{\Xcal})$ is \emph{noetherian} if $X$ has a finite open cover $\{U_i\}$ such that every restriction $\Xcal_{\vert U_i}$ is the superspectrum of a noetherian superring.  
\end{defin}

When $\Xcal$ is a noetherian superscheme, the ideal sheaf $\Jc_\Xcal$ is coherent, and   $\Jc_\Xcal/\Jc_\Xcal^2$ is a coherent $\Oc_X$-module.

 {The even dimension of $\Xcal$ is the dimension of the scheme $X$. When $\Xcal$ is locally split, we define its odd dimension as the rank of the locally free $\Oc_X$-module $\Jc_\Xcal/\Jc_\Xcal^2$ and the dimension of $\Xcal$ as  the pair $\dim\Xcal=\operatorname{even-dim} \Xcal \,\vert\, \operatorname{odd-dim}\Xcal$.
There are various possibilities for a definition of the odd dimension of a superscheme that generalize the one given for split superschemes (see \cite[7.1.1]{We09} for the affine case). In this paper we shall use the following.
\begin{defin}\label{def:dim} The odd dimension of a superscheme $\Xcal$ is  the smallest integer number $n$ such that  $\Jc_\Xcal^{n+1}=0$, that is, the odd dimension is the order according to \cite[2.1]{BrHRPo20}.  We say that $\Xcal$ 
has \emph{pure} odd dimension $n$ if 
\begin{enumerate}
\item $\Xcal$ has odd dimension $n$, and
\item the $\Oc_X$-module $\Jc_\Xcal/\Jc_\Xcal^2$ is generically of rank $n$, that is, it is of rank $n$ on a dense open subset of every irreducible component of $X$.
\end{enumerate}
\end{defin}
}
We say that a morphism $f\colon \Xcal\to \Sc$ of superschemes has \emph{relative dimension $m\vert n$} if the fibres $\Xcal_s$ are superschemes of dimension $m\vert n$.

\subsubsection{Irreducible superschemes}

A superring $\As$ is \emph{integral} or it is a \emph{superdomain} \cite{We09} (resp. reduced, resp. a \emph{superfield})   if its bosonic reduction  is an integral commutative ring (resp. reduced, resp. a field). 
If $\As$ is a superdomain, its superring of total fractions is the localization $\K(\As)=S^{-1}\As$ by the multiplicative system of the even elements that are not zero divisors. Note that the non zero divisors always belong to $\As-J$; we then easily see that $\K(\As)$ is a superfield, because its bosonic reduction is the field of fractions $K(A)$ of $A$.

We say that a superscheme $\Xcal=(X,\Oc_\Xcal)$ is \emph{integral} (resp. \emph{reduced}) if the superring $\Oc_\Xcal(U)$ is a superdomain (resp. a reduced superring) for every affine open sub-superscheme $\U$. This is equivalent to its bosonic reduction $X$ to be an integral (resp. reduced) scheme.
If $\Xcal$ is an integral superscheme, we can define its superring $\K(\Xcal)$ of rational functions as the local superring $\Oc_{\Xcal,x_0}$, where $x_0$ is the generic point of $X$. We have   $\K(\Xcal)=\K(\Oc_\Xcal(U))$ for every affine open sub-superscheme $\U$.

In the sequel we shall use the words \emph{supervariety} and \emph{sub-supervariety} as  synonyms for \emph{integral superscheme} and \emph{integral closed sub-superscheme}, respectively.

\subsection{Separated, proper and  flat morphisms of superschemes}\label{ss:propermor}\label{s:smoothmorphisms}
The definition of some types of scheme morphisms can be straightforwardly generalized to the superscheme setting.
\begin{defin}\label{def:proper}
A morphism $f\colon \Xcal\to\Sc$ of superschemes is:
\begin{enumerate}
\item affine, if for every affine open sub-superscheme $\Ucal\subset \Sc$ the inverse image $f^{-1}(\Ucal)$ is affine;
\item finite, if it is affine, and for $\Ucal = \SSpec \As\subset \Sc$, then $f^{-1}(\Ucal)=\SSpec \Bs$, where $\Bs$ is a finitely generated graded $\As$-module;
\item locally of finite type, if $\Sc$ has an affine open cover $\{\Ucal_i=\SSpec \As_i\}$ such that every inverse image
 $f^{-1}(\Ucal_i)$ has an open affine cover $\mathfrak V_i =\{\mathcal V_{ij} = \SSpec \Bs_{ij}\}$, where
 each $\Bs_{ij}$ is a finitely generated graded $\As_i$-algebra.
 \item $f$ is of finite type if in addition each $\mathfrak V_i$ can be taken to be finite. In other words, if it is locally of finite type and quasi-compact.
\item separated, if the diagonal morphism $\delta_f\colon \Xcal\to \Xcal\times_\Sc \Xcal$ is a closed immersion (actually it is enough to ask that it is a closed morphism);
\item proper, if it is separated, of finite type and universally closed;
\item flat, if for every point $x\in X$, $\Oc_{\Xcal,x}$ is flat over $\Oc_{\Sc,f(x)}$;
\item faithfully flat, if it is flat and surjective.
\end{enumerate}
\end{defin}

As usual,   flatness and faithful flatness are stable under  base change and composition.
Separatedness and properness   only depend  on the   morphisms between the underlying ordinary schemes
 {(\cite[Prop.~A.13]{BrHRPo20})}:

\begin{prop}\label{prop:proper}
A morphism $f\colon \Xcal\to\Sc$ of superschemes which is  {locally of finite type}  is proper (resp.~separated) if and only if the induced scheme morphism $f_{\bos}\colon X \to S$ is proper (resp.~separated).
\end{prop}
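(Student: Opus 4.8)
The plan is to transport the classical scheme-theoretic arguments through the bosonic reduction functor $\Xcal\mapsto X$, using two of its formal properties. First, bosonic reduction changes neither underlying topological spaces nor continuous maps: by construction the space underlying $\SSpec\As$ is $\Spec A$, where $A$ is the bosonic reduction of $\As$, and the canonical closed immersion $i\colon X\hookrightarrow\Xcal$ is the identity on spaces; hence for every morphism $g$ of superschemes the continuous map underlying $g$ is identified with the one underlying $g_{bos}$. Second, bosonic reduction commutes with fibre products, and therefore with base change. This may be checked affine-locally — the ideal of nilpotents of a tensor product of superrings over a superring is generated by the ideals of nilpotents of the two factors, so the bosonic reduction of the tensor product is the tensor product of the bosonic reductions — or deduced abstractly from the fact that $\Xcal\mapsto X$ is right adjoint to the inclusion of ordinary schemes into superschemes, and right adjoints preserve limits. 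Consequently, for any superscheme morphism $\Sc'\to\Sc$ with bosonic reduction $S'\to S$ one has $(\Xcal\times_\Sc\Sc')_{bos}=X\times_S S'$, and the bosonic reduction of the projection $\Xcal\times_\Sc\Sc'\to\Sc'$ is the base change of $f_{bos}$ along $S'\to S$.

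For the separated case, recall from Definition~\ref{def:proper}(5) and its parenthetical remark that $f$ is separated precisely when the diagonal $\delta_f\colon\Xcal\to\Xcal\times_\Sc\Xcal$ is a closed morphism, i.e.\ when the continuous map underlying $\delta_f$ is closed; likewise $f_{bos}$ is separated exactly when the map underlying $\delta_{f_{bos}}$ is closed. By the two facts above, the space underlying $\Xcal\times_\Sc\Xcal$ is that of $X\times_S X$, and under this identification the map underlying $\delta_f$ coincides with the one underlying $\delta_{f_{bos}}$. Hence $f$ is separated if and only if $f_{bos}$ is — no finiteness hypothesis is needed for this half.

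For properness I would verify the three defining conditions of Definition~\ref{def:proper}(6) one at a time. \emph{Finite type}: if $f$ is locally of finite type then so is $f_{bos}$, since on an affine chart $f$ has the form $\SSpec\Bs\to\SSpec\As$ with $\Bs$ a finitely generated graded $\As$-algebra, and then $\Bs/J_\Bs$ is generated over $\As/J_\As$ by the images of the even generators; quasi-compactness is a topological condition, so by the first fact $f$ is quasi-compact iff $f_{bos}$ is, and combining these, under the standing hypothesis $f$ is of finite type iff $f_{bos}$ is. \emph{Universally closed}: if $f_{bos}$ is universally closed and $\Sc'\to\Sc$ is an arbitrary superscheme morphism, then the bosonic reduction of $\Xcal\times_\Sc\Sc'\to\Sc'$ is a base change of $f_{bos}$, hence a closed map, hence so is $\Xcal\times_\Sc\Sc'\to\Sc'$ itself; conversely, if $f$ is universally closed and $h\colon S'\to S$ is any morphism of ordinary schemes, regard $S'$ as a superscheme over $\Sc$ via $h$ followed by the closed immersion $S\hookrightarrow\Sc$ (so $S'$, being a scheme, equals its own bosonic reduction), whence the closed map $\Xcal\times_\Sc S'\to S'$ has bosonic reduction the base change of $f_{bos}$ along $h$, which is therefore closed; as $h$ was arbitrary, $f_{bos}$ is universally closed. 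Assembling the three equivalences with the definition of proper gives: when $f$ is locally of finite type, $f$ is proper iff $f_{bos}$ is proper.

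I do not expect any essential difficulty; the single point needing care is the universal-closedness step, where one must make sure that testing closedness of all base changes along superschemes $\Sc'\to\Sc$ is no stronger than testing along ordinary schemes $S'\to S$. This is exactly what the compatibility of bosonic reduction with base change and with underlying continuous maps provides — closedness of a superscheme morphism is detected on its bosonic reduction, and every ordinary scheme over $S$ lifts to a superscheme over $\Sc$ equal to its own bosonic reduction — and the remainder is the classical argument carried over verbatim (cf.\ \cite[Prop.~A.13]{BrHRPo20}).
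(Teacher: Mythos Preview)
Your argument is correct and is essentially the natural one. The paper itself does not give a proof here; it simply records the statement and defers to \cite[Prop.~A.13]{BrHRPo20}, so your write-up amounts to supplying the details behind that citation. The two key ingredients you isolate --- that bosonic reduction is the identity on underlying topological spaces, and that it commutes with fibre products --- are exactly what is needed, and your handling of the ``universally closed'' step (testing over arbitrary superschemes $\Sc'$ versus over ordinary schemes $S'$) is the only place requiring any thought, which you address correctly.

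One minor terminological slip: when you write ``the ideal of nilpotents'' you mean the ideal $J$ generated by the odd elements, not the full nilradical (they need not coincide). The computation you sketch is the right one for $J$, and the adjoint argument you give as an alternative is cleaner anyway and sidesteps the issue entirely.
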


As a consequence, the valuative criteria for separatedness  and properness \cite[Thm.~II.4.3 and II.4.7]{Hart77}   still apply in the graded setting.
 Analogously, we can then extend to superschemes many of the properties of proper and separated morphisms of schemes.

\begin{prop}\label{prop:sorite}
{\ }
\begin{enumerate}
\item The properties of being separated and proper are stable under  base change.
\item Every morphism of affine superschemes is separated.
\item Open immersions are separated and closed immersions are proper.
\item The composition of two separated (resp.~ proper) morphisms of superschemes is separated (resp.~ proper).
\item If $g\colon \Ycal \to \Xcal$ and $f\colon \Xcal \to \Sc$ are morphisms of superschemes and $f\circ g$ is proper and {$f$ is separated, then $g$ is proper. If  $f\circ g$ is separated, then $g$ is separated.} 
\end{enumerate}\vspace{-2em}
\qed
\end{prop}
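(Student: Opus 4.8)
The plan is to derive every item from Proposition~\ref{prop:proper}, exploiting that under the standing hypotheses all morphisms in sight are locally of finite type, so that separatedness and properness may be tested on bosonic reductions. The preliminary fact I would record first is that bosonic reduction is compatible with fibre products, base changes and closed immersions. Compatibility with fibre products holds because an ordinary scheme $Z$ carries no nonzero odd functions, so any superscheme morphism $Z\to\Xcal$ factors uniquely through the closed immersion $i\colon X\hookrightarrow\Xcal$; thus $(-)_{\bos}$ is right adjoint to the inclusion $\mathrm{Sch}\hookrightarrow\mathrm{SSch}$ and preserves limits, giving $(\Xcal\times_{\Sc}\Ycal)_{\bos}\cong X\times_S Y$ compatibly with the projections (this is already in \cite{BrHRPo20}). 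Compatibility with closed immersions holds because a surjection $\Oc_{\Xcal}\twoheadrightarrow\iota_\ast\Oc_{\Zcal}$ stays surjective after quotienting by the odd ideals. Composition of superscheme morphisms tautologically induces composition on bosonic reductions.

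Granting this, items (1) and (4) are immediate: if $f$ is separated (resp.\ proper) then $f_{\bos}$ is separated (resp.\ proper) by Proposition~\ref{prop:proper}; the classical sorite \cite[Cor.~II.4.8]{Hart77} gives the same property for base changes of $f_{\bos}$ (item (1)) and for composites of such morphisms (item (4)); by the preliminary fact these bosonic morphisms are precisely the bosonic reductions of the superscheme morphisms under consideration, so Proposition~\ref{prop:proper}, applied in reverse, concludes. Item (5) follows in the same way from the cancellation part of \cite[Cor.~II.4.8]{Hart77}: from $f$ separated and $f\circ g$ proper (resp.\ $f\circ g$ separated) one gets $f_{\bos}$ separated and $(f\circ g)_{\bos}=f_{\bos}\circ g_{\bos}$ proper (resp.\ separated), hence $g_{\bos}$ proper (resp.\ separated), hence $g$ proper (resp.\ separated). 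Alternatively, (5) can be proved intrinsically through the graph factorization $g=p\circ\Gamma_g$, where $\Gamma_g\colon\Ycal\to\Ycal\times_{\Sc}\Xcal$ is a base change of the diagonal $\delta_f$ — which is always a monomorphism, and a closed immersion when $f$ is separated — and $p\colon\Ycal\times_{\Sc}\Xcal\to\Xcal$ is a base change of $f\circ g$; one then invokes (1), (3) and (4).

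For (2), with $\Xcal=\SSpec\Bs$ and $\Sc=\SSpec\As$ one has $\Xcal\times_{\Sc}\Xcal=\SSpec(\Bs\otimes_{\As}\Bs)$, and the diagonal $\delta_f$ corresponds to the multiplication homomorphism $\Bs\otimes_{\As}\Bs\to\Bs$, which is surjective; hence $\delta_f$ is a closed immersion and $f$ is separated. (Equivalently, $f_{\bos}\colon\Spec B\to\Spec A$ is a morphism of affine schemes, so it is separated, and Proposition~\ref{prop:proper} applies.) For (3), if $j\colon\Ucal\hookrightarrow\Xcal$ is an open immersion then $\Ucal\times_{\Xcal}\Ucal=\Ucal$ and $\delta_j$ is an isomorphism, so $j$ is separated; if $\iota\colon\Zcal\hookrightarrow\Xcal$ is a closed immersion then $\iota_{\bos}$ is a closed immersion of schemes by the preliminary fact, hence proper by \cite[Cor.~II.4.8]{Hart77}, so $\iota$ is proper by Proposition~\ref{prop:proper} (alternatively, $\iota$ is visibly separated, of finite type, and universally closed, being a closed topological embedding stable under base change).

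I do not expect a serious obstacle here: the content is essentially a transcription of the classical sorite. The one point requiring genuine care is the preliminary fact of the first paragraph — that passing to the bosonic reduction is compatible with all the constructions (fibre products, base changes, closed immersions) used above — since Proposition~\ref{prop:proper} is only useful once it is fed the correct bosonic morphism. After that, everything reduces mechanically to \cite[\S II.4]{Hart77}.
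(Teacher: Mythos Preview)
Your proposal is correct and follows exactly the approach the paper intends: the paper gives no proof at all (the \qed immediately after the statement), treating the proposition as an immediate consequence of Proposition~\ref{prop:proper} together with the classical sorite for schemes, which is precisely what you have spelled out. Your only addition is making explicit the compatibility of bosonic reduction with fibre products and closed immersions, which the paper leaves tacit.
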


A suitable notion of smoothness of a morphism of superschemes turns out to be the following. For a motivation see \cite{BrHRPo20}, Section A.6.
\begin{defin}\label{def:smooth}  A morphism $f\colon \Xcal\to\Sc$ of superschemes of relative dimension $m\vert n$ is smooth if:
\begin{enumerate}
\item $f$ is locally of finite presentation (if the superschemes are locally noetherian, it is enough to ask that $f$ is locally of finite type);
\item $f$ is flat;
\item the sheaf of relative differentials $\Omega_{\Xcal/\Sc}$ is locally free of rank $m\vert n$.
\end{enumerate}
\end{defin}

When $\Sc=\Spec k$  one has the notion of smooth  
superscheme over a field.
One has the following criterion for smoothness over a field. For a proof see \cite[Prop.~A.17]{BrHRPo20}.

\begin{prop}\label{prop:smoothsplit}
 A superscheme $\Xcal$  of dimension $m\vert n$,
locally of finite type over a field $k$,   is smooth if and only if
\begin{enumerate}
\item $X$ is a smooth scheme over $k$ of dimension $m$;
\item  the $\Oc_X$-module $\Ec=\Jc/\Jc^2$ is locally free of rank $n$ and the natural map ${\bigwedge}_{\Oc_X} \Ec\to Gr_J(\Oc_\Xcal)$ is an isomorphism.
\end{enumerate}
If $\Xcal$ is smooth of dimension $m\vert n$ then it is locally split,
and for every closed point $x\in X$ there exist \emph{graded local coordinates}, that is, $m$ even functions $(z_1,\dots,z_m)$ which generate the maximal ideal $\mathfrak m_x$ of $\Oc_{X,x}$ and $n$  odd functions $(\theta_1,\dots,\theta_n)$ generating $\Ec_x$, such that $(dz_1,\dots,dz_m,d\theta_1,\dots,d\theta_n)$ is a   basis for $\Omega_{\Xcal,x}$.
\end{prop}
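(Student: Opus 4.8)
The plan is to reduce the statement to the two well-understood pieces of the picture: the bosonic base $X$ and the graded structure sheaf $\Oc_\Xcal$, using Definition~\ref{def:smooth} as the working definition. So I would argue both implications by a local computation on an affine open $\SSpec \As$, where $\As$ is a noetherian superring with bosonic reduction $A$.

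For the ``if'' direction, suppose $X$ is smooth over $k$ of dimension $m$ and $\Ec=\Jc/\Jc^2$ is locally free of rank $n$ with ${\bigwedge}_{\Oc_X}\Ec\to Gr_J(\Oc_\Xcal)$ an isomorphism. Locally of finite type is assumed. Flatness of $\Oc_\Xcal$ over $k$ is automatic since $k$ is a field. The real content is clause (3) of Definition~\ref{def:smooth}: that $\Omega_{\Xcal/k}$ is locally free of rank $m\vert n$. Here I would use the isomorphism ${\bigwedge}_{\Oc_X}\Ec\simeq Gr_J(\Oc_\Xcal)$ to conclude that $\Xcal$ is (locally) split, $\Oc_\Xcal\simeq {\bigwedge}_{\Oc_X}\Ec$; then the standard conormal/cotangent exact sequence for the closed immersion $i\colon X\hookrightarrow\Xcal$, combined with the splitting $\rho\colon\Xcal\to X$ and the fact that for a split superscheme $\Omega_{\Xcal/k}$ decomposes as ($\Omega_{X/k}$ pulled back) $\oplus$ ($\Ec$ pulled back with odd parity shift), gives local freeness of rank $m\vert n$. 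The even part has rank $m$ because $X$ is smooth of dimension $m$; the odd part has rank $n$ because $\Ec$ has rank $n$.

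For the ``only if'' direction, assume $\Xcal$ is smooth of dimension $m\vert n$. Restricting the conormal sequence $\Jc/\Jc^2 \to i^\ast\Omega_{\Xcal/k}\to \Omega_{X/k}\to 0$ along $i$ and using that $i^\ast\Omega_{\Xcal/k}$ is locally free of rank $m\vert n$, one extracts that $\Omega_{X/k}$ is locally free of rank $m$, hence $X$ is smooth over $k$ of dimension $m$; simultaneously the conormal term $\Ec=\Jc/\Jc^2$ must be locally free of rank $n$ and the sequence must be (locally) split exact. To get that ${\bigwedge}_{\Oc_X}\Ec\to Gr_J(\Oc_\Xcal)$ is an isomorphism, I would work at a closed point $x$, lift a basis $d\theta_1,\dots,d\theta_n$ of the odd part of $\Omega_{\Xcal,x}$ to odd functions $\theta_1,\dots,\theta_n\in\Jc_x$; flatness plus local freeness of $\Omega$ forces these to form a regular sequence of generators of $\Jc_x$ with no relations in low degrees, which is exactly the statement that the natural surjection from the exterior algebra is injective. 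The last sentence of the proposition — local splitness and existence of graded local coordinates — then falls out: choose $z_1,\dots,z_m$ a regular system of parameters on the smooth scheme $X$ at $x$ and the $\theta_j$ as above; that $(dz_i,d\theta_j)$ is a basis of $\Omega_{\Xcal,x}$ is immediate from the decomposition, and the isomorphism ${\bigwedge}\Ec\simeq Gr_J(\Oc_\Xcal)$ upgrades the local description to an honest splitting $\Oc_{\Xcal}\simeq{\bigwedge}_{\Oc_X}\Ec$ on a neighborhood.

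The main obstacle I anticipate is the careful bookkeeping around clause (3) — relating local freeness and rank of $\Omega_{\Xcal/k}$ to the pair of conditions on $X$ and on $Gr_J(\Oc_\Xcal)$ — and in particular the use of flatness to rule out unexpected relations among the odd generators (i.e. proving the exterior-algebra map is injective, not just surjective). Everything else is a transcription of the classical Jacobian/regular-sequence criterion for smoothness, together with the behavior of Kähler differentials under the closed immersion $i\colon X\hookrightarrow\Xcal$ and its retraction in the split case. Since this is precisely \cite[Prop.~A.17]{BrHRPo20}, I would also be content to cite that reference for the technical heart and only indicate the structure of the argument here.
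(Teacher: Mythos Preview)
The paper does not prove this proposition at all: it simply refers the reader to \cite[Prop.~A.17]{BrHRPo20}. Your proposal correctly identifies that same reference and supplies a reasonable outline of the argument behind it, so in that sense you are already aligned with (indeed, more detailed than) the paper's treatment.

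One small gap worth flagging in your sketch of the ``if'' direction: the isomorphism ${\bigwedge}_{\Oc_X}\Ec \iso Gr_J(\Oc_\Xcal)$ is only an isomorphism of \emph{associated graded} rings, and does not by itself give a splitting $\Oc_\Xcal\simeq {\bigwedge}_{\Oc_X}\Ec$. You need first to produce a local section of $\Oc_\Xcal\to\Oc_X$ (i.e.\ local projectedness), which follows from the smoothness of $X$ over $k$ via formal smoothness: the map $\Spec\Oc_X\to\Spec k$ lifts through the nilpotent thickening $X\hookrightarrow\Xcal$. Once $\Xcal$ is locally projected you can lift a local basis of $\Ec$ to odd elements of $\Oc_\Xcal$ and compare filtrations to upgrade the graded isomorphism to an actual one. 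With that step made explicit, your outline is sound.
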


\begin{rem}(Formally smooth morphisms)
As in the commutative case, the smoothness of morphisms may be defined in different ways.
One may say that 
a morphism  of superschemes $f\colon \Xcal\to \Sc$
is formally smooth if for every affine $\Sc$-superscheme $\SSpec(\Bs)$ and every nilpotent ideal $\Nc\subset \Bs$,
any $\Sc$-morphism $\SSpec(\Bs/\Nc)\to \Xcal$ extends to an $\Sc$-morphism $\SSpec(\Bs)\to \Xcal$. 
However, it turns out that in the locally noetherian case, a superscheme morphism which is locally
of finite type is formally smooth if and only if it is smooth (see \cite{BrHRPo20}, Section A.6.1).
\end{rem}

\subsection{(Quasi-)coherent  $\Oc_\Xcal$-modules and cohomology}
If  $\M$ is an $\Oc_\Xcal$-module on a superscheme $\Xcal=(X,\Oc_\Xcal)$ one can   consider sheaf
cohomology groups $H^\bullet(X,\M)$, with $\M$ 
regarded as a sheaf of abelian groups on $X$, so that 
  the usual cohomology vanishing   also holds: if $\Xcal$ is noetherian, then  $H^i(X,\M)=0$  for every  $i>\dim X$.
  
  The notion of (quasi-)coherent  $\Oc_\Xcal$-module can be introduced by mimicking the ordinary theory. 
 Moreover,     a notion of \emph{projective superscheme} can be introduced as well; here some new features are involved,
 as for instance super-Grassmannians turn out not to be projective.
 
 For coherent sheaves on a projective superscheme one has a finiteness result.

\begin{prop} If $\Xcal$ is a projective superscheme over a field $k$ and $\M$ is a coherent sheaf on it, then all groups $H^i(X,\M) $ are finite-dimensional over $k$.
\label{prop:finiteness}
\end{prop}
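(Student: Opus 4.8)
The plan is to reduce the finiteness statement to the classical one for projective schemes over a field by dévissage along the odd nilpotent filtration. First I would recall that a projective superscheme $\Xcal=(X,\Oc_\Xcal)$ has underlying scheme $X$ projective over $k$ in the ordinary sense, and that the ideal sheaf $\Jc=\Jc_\Xcal$ of odd nilpotents is coherent with $\Jc^{n+1}=0$ for $n=\operatorname{odd-dim}\Xcal$ (Definition \ref{def:dim}). Since sheaf cohomology on $\Xcal$ is computed on the topological space $X$ with $\M$ regarded as a sheaf of abelian groups, there is no difference between $H^i(X,\M)$ taken on the superscheme and on any closed subspace supporting $\M$; in particular all the relevant cohomology vanishes for $i>\dim X$, so only finitely many groups are in play.

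Next I would set up the dévissage. For a coherent $\Oc_\Xcal$-module $\M$, the finite filtration
\[
\M\supseteq \Jc\M\supseteq \Jc^2\M\supseteq\cdots\supseteq \Jc^{n+1}\M=0
\]
has successive quotients $\Jc^j\M/\Jc^{j+1}\M$ which are coherent modules annihilated by $\Jc$, hence coherent sheaves on the bosonic reduction $X=(X,\Oc_X)$, which is an ordinary projective scheme over $k$. By the classical Serre finiteness theorem (\cite[Thm.~III.5.2]{Hart77}), each $H^i(X,\Jc^j\M/\Jc^{j+1}\M)$ is a finite-dimensional $k$-vector space. Then I would run the long exact cohomology sequences attached to the short exact sequences $0\to \Jc^{j+1}\M\to \Jc^j\M\to \Jc^j\M/\Jc^{j+1}\M\to 0$ and induct downward on $j$: finite-dimensionality of $H^i$ of the sub and the quotient forces finite-dimensionality of $H^i$ of the middle term, since the category of finite-dimensional $k$-vector spaces is closed under extensions. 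After $n+1$ steps one obtains that $H^i(X,\M)$ is finite-dimensional over $k$ for all $i$.

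One technical point that needs a word of care is that $k$-linearity of the cohomology must be tracked through the argument: the structure sheaf $\Oc_\Xcal$ is a sheaf of $k$-algebras, each term in the filtration is a sheaf of $k$-modules and the connecting maps in the long exact sequences are $k$-linear, so all the $H^i$ are $k$-vector spaces and the finite-dimensionality bookkeeping is legitimate. A second point is that one should check $\Jc$ is indeed coherent and that $\Jc^j\M$ is coherent for each $j$ — this follows because $\Xcal$ is noetherian (projective superschemes over a field are noetherian) so products and powers of coherent ideals with coherent modules stay coherent, as already noted in the excerpt after Definition \ref{def:superscheme}.

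I do not expect a serious obstacle here; the only mild subtlety, and the step I would be most careful about, is the identification of $\Jc^j\M/\Jc^{j+1}\M$ as a genuine coherent sheaf on the projective scheme $X$ to which the classical theorem applies — i.e. checking that the $\Oc_\Xcal$-module structure on this quotient factors through $\Oc_\Xcal/\Jc=\Oc_X$ and that coherence over $\Oc_\Xcal$ descends to coherence over $\Oc_X$. Once that is in place, the proof is a routine finite induction using Serre's theorem as a black box.
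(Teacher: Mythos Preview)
Your proposal is correct and is precisely the standard d\'evissage argument. The paper does not actually prove this proposition in the text but defers to \cite{BrHRPo20}; however, the exact filtration you use and the reduction to coherent sheaves on the bosonic scheme $X$ are the same technique the paper itself employs in the proof of Super GAGA (Proposition \ref{prop:GAGA}), so your approach is essentially the intended one.
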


 More generally, there is a finiteness result for the cohomology of proper morphisms.
\begin{prop} \label{prop:finiteness2} Let $f\colon \Xcal \to \Sc$ be a a proper morphism of superschemes. For every coherent sheaf $\M$ on $\Xcal$ and for every $i\geq 0$, the higher direct images $R^i f_\ast\M$ are coherent sheaves on $\Sc$.
\end{prop}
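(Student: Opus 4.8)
The plan is to deduce the statement from the classical Grothendieck coherence theorem for proper morphisms of noetherian schemes, applied to the bosonic reduction $f_{bos}\colon X\to S$, by a d\'evissage along the powers of the odd ideal $\Jc:=\Jc_\Xcal$. Since coherence of $R^if_\ast\M$ can be tested locally on $\Sc$, one may first assume that $\Sc$ is affine and noetherian; as $f$ is proper, hence of finite type, $\Xcal$ is then a noetherian superscheme, so $\Jc$ is coherent and, $\Xcal$ having finite odd dimension (Definition~\ref{def:dim}), one has $\Jc^{\,n+1}=0$ for some $n\ge 0$ --- and here quasi-compactness of $\Xcal$, gained in the reduction, is what makes this global. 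This yields a finite filtration
\[
\M=\Jc^0\M\supseteq\Jc\M\supseteq\cdots\supseteq\Jc^{\,n}\M\supseteq\Jc^{\,n+1}\M=0,
\]
whose successive quotients $\Gc_k:=\Jc^{\,k}\M/\Jc^{\,k+1}\M$ are coherent $\Oc_\Xcal$-modules killed by $\Jc$; since $\Oc_\Xcal$ is noetherian, such a module is exactly a coherent $\Oc_X$-module, i.e.\ a coherent sheaf on the bosonic reduction $X$.

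I would first settle the graded pieces. By Proposition~\ref{prop:proper} the induced morphism $f_{bos}\colon X\to S$ is proper, and it fits in a commutative square with the closed immersions $i_\Xcal\colon X\hookrightarrow\Xcal$ and $i_\Sc\colon S\hookrightarrow\Sc$. Since closed immersions are affine, the Leray spectral sequences for these compositions degenerate, and --- the continuous map underlying $f$ being the one underlying $f_{bos}$ --- one gets a canonical isomorphism $R^if_\ast\Gc_k\cong (i_\Sc)_\ast R^i(f_{bos})_\ast\Gc_k$ compatible with the module structures, the $\Oc_\Sc$-action on the left factoring through $\Oc_\Sc\to\Oc_S$ because $\Jc$ annihilates $\Gc_k$. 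By the classical coherence theorem, $R^i(f_{bos})_\ast\Gc_k$ is a coherent $\Oc_S$-module; its direct image under the closed immersion $i_\Sc$ is a coherent $\Oc_\Sc$-module. Hence $R^if_\ast\Gc_k$ is coherent for all $i$ and $k$.

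Next I would propagate coherence from the graded pieces to $\M$ by descending induction on $k$. From the exact sequences $0\to\Jc^{\,k+1}\M\to\Jc^{\,k}\M\to\Gc_k\to0$ one obtains long exact sequences
\[
\cdots\to R^if_\ast\Jc^{\,k+1}\M\to R^if_\ast\Jc^{\,k}\M\to R^if_\ast\Gc_k\to R^{i+1}f_\ast\Jc^{\,k+1}\M\to\cdots.
\]
For $k=n+1$ the sheaves $R^\bullet f_\ast\Jc^{\,k}\M$ all vanish; assuming $R^\bullet f_\ast\Jc^{\,k+1}\M$ coherent, the displayed sequence exhibits $R^if_\ast\Jc^{\,k}\M$ as an extension of a subsheaf of the coherent sheaf $R^if_\ast\Gc_k$ by a quotient of the coherent sheaf $R^if_\ast\Jc^{\,k+1}\M$, hence coherent, since coherent $\Oc_\Sc$-modules form an abelian subcategory stable under kernels, cokernels and extensions. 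Taking $k=0$ gives the assertion.

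The steps that are not purely formal are the opening reduction that makes the filtration finite --- precisely where noetherianness of $\Xcal$, and thus finiteness of its odd dimension, is used --- and the identification $R^if_\ast\Gc_k\cong(i_\Sc)_\ast R^i(f_{bos})_\ast\Gc_k$ with the attendant book-keeping of $\Oc_\Sc$-module structures; everything else is d\'evissage, and I expect this last identification to be the main technical point to write out carefully. Note in particular that, unlike in the finiteness statement for projective superschemes, no super-analogue of Chow's lemma or explicit computation of the cohomology of super-projective space enters here.
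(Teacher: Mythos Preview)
Your argument is correct. The paper itself does not supply a proof of Proposition~\ref{prop:finiteness2}; immediately after stating it, the authors write ``For details and proofs about these notions and results see \cite{BrHRPo20}.'' So there is no in-paper proof to compare against directly.

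That said, your d\'evissage along powers of $\Jc_\Xcal$ is exactly the technique the paper itself deploys in the one place it does prove a coherence-type result, namely Proposition~\ref{prop:GAGA} (Super GAGA): there too a finite filtration $0\subset\Ic^{\,n}\M\subset\cdots\subset\Ic\M\subset\M$ with successive quotients supported on the bosonic reduction is used, the classical theorem is invoked on each graded piece, and one concludes by descending recursion. Your proof is a faithful elaboration of that same strategy, with the additional care of spelling out the identification $R^if_\ast\Gc_k\cong(i_\Sc)_\ast R^i(f_{bos})_\ast\Gc_k$ and the passage of module structures through $\Oc_\Sc\to\Oc_S$; this is precisely the bookkeeping the paper leaves implicit. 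There is no genuine gap.
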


For details and proofs about these notions and results see \cite{BrHRPo20}.

\subsubsection{Zariski's Main theorem}
Let $f\colon \Xcal \to \Sc$ be a proper morphism of superschemes. By Proposition \ref{prop:finiteness2} $f_\ast\Oc_\Xcal$ is a coherent sheaf of $\Oc_\Sc$-superalgebras. Then, the natural morphism $g\colon \Ycal=\SSpec f_\ast\Oc_\Xcal\to \Sc$ is finite and $f$ factors as
$$
\xymatrix{\Xcal \ar[rr]^{\bar f}\ar[rd]^f & & \Ycal \ar[ld]^g \\
& \Sc&}
$$
Moreover, the fibres of $\bar f$ are connected. This is the super version of the \emph{Stein factorization} \cite[4.3.1]{EGAIII-I}.

One easily see that the proofs of \cite[Prop.\, 4.4.1 and 4.4.2]{EGAIII-I} are still valid for superschemes. One then has:
\begin{prop}[Zariski's Main Theorem]\label{prop:zariski} Let $f\colon \Xcal \to \Sc$ be a  morphism of superschemes. $f$ is finite if and only if it is proper and  its bosonic fibres are finite.
\end{prop}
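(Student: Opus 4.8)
The plan is to reduce the statement to its classical counterpart (\cite[Cor.~4.4.3 or 4.4.9]{EGAIII-I}, Zariski's Main Theorem for ordinary schemes) together with the superscheme machinery already assembled in the excerpt. The ``only if'' direction is immediate: a finite morphism is affine and of finite type (each $\Bs$ being a finitely generated $\As$-module, hence a fortiori a finitely generated $\As$-algebra), it is proper because properness depends only on the underlying ordinary morphism (Proposition~\ref{prop:proper}) and finite morphisms of ordinary schemes are proper, and its bosonic fibres are the fibres of the finite morphism $f_{bos}\colon X\to S$, hence finite. So the content is the ``if'' direction.

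First I would form the Stein factorization constructed just above the statement: $f=g\circ\bar f$ with $g\colon\Ycal=\SSpec f_\ast\Oc_\Xcal\to\Sc$ finite (here Proposition~\ref{prop:finiteness2} is what guarantees $f_\ast\Oc_\Xcal$ is a coherent sheaf of $\Oc_\Sc$-superalgebras, so $g$ is genuinely finite) and $\bar f$ proper with connected fibres. Since $g$ is finite it is in particular separated, so by Proposition~\ref{prop:sorite}(5) applied to $f=g\circ\bar f$ proper and $g$ separated we get that $\bar f$ is proper; alternatively $\bar f$ is proper directly from the Stein factorization. Now it suffices to prove that $\bar f\colon\Xcal\to\Ycal$ is an isomorphism, for then $f=g\circ\bar f$ is finite. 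This is a local question on $\Ycal$, so we may assume $\Sc=\SSpec\As$ and $\Ycal=\SSpec\Bs$ are affine, with $\Bs=f_\ast\Oc_\Xcal(\Ycal)$ and $\As\to\Bs$ finite. Passing to bosonic reductions, $\bar f_{bos}\colon X\to Y$ is a proper morphism of ordinary noetherian schemes whose fibres are the bosonic fibres of $\bar f$, hence finite; moreover $\bar f_{bos}$ has connected fibres and $(\bar f_{bos})_\ast\Oc_X$ is the structure sheaf of $Y$ (one checks $f_\ast\Oc_\Xcal$ has bosonic reduction $(\bar f_{bos})_\ast\Oc_X$, or invokes the characterization of Stein factorization). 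By the classical Zariski Main Theorem, $\bar f_{bos}$ is an isomorphism.

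It remains to upgrade ``$\bar f_{bos}$ is an isomorphism'' to ``$\bar f$ is an isomorphism of superschemes.'' Here I would argue as in the proof of the super Stein factorization, i.e. along the lines of \cite[Prop.~4.4.1 and 4.4.2]{EGAIII-I} whose validity for superschemes was asserted above: filtering $\Oc_\Xcal$ by powers of the nilpotent ideal $\Jc_\Xcal$, the graded pieces $\Jc_\Xcal^k/\Jc_\Xcal^{k+1}$ are coherent $\Oc_X$-modules, and $\bar f_\ast$ of each such piece is computed by the already-known ordinary result applied to the isomorphism $\bar f_{bos}$ and to these coherent sheaves; an induction on the (finite, since $\Xcal$ is locally noetherian hence of finite odd dimension by Definition~\ref{def:dim}) length of the $\Jc_\Xcal$-adic filtration then shows $\bar f^\sharp\colon\Oc_\Ycal\to\bar f_\ast\Oc_\Xcal$ is an isomorphism and that $\bar f$ is a homeomorphism on underlying spaces (already known) with $R^i\bar f_\ast\Oc_\Xcal=0$ for $i>0$; this forces $\bar f$ to be an isomorphism. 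I expect the main obstacle to be precisely this last bookkeeping step: making sure that the comparison of higher direct images and the dévissage along the $\Jc_\Xcal$-adic filtration go through without hidden use of $\Oc_X$-linearity where only $\Z_2$-graded structure is available, and that connectedness of the fibres of $\bar f$ (as superschemes, equivalently of $\bar f_{bos}$) is genuinely what the classical input requires. Everything else is a routine transcription, since by Propositions~\ref{prop:proper}, \ref{prop:finiteness}, \ref{prop:finiteness2} all the ingredients — properness, coherence of pushforwards, finiteness of cohomology — are already available in the super setting.
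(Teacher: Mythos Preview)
Your approach is the same as the paper's: the paper simply asserts that the proofs of \cite[Prop.~4.4.1 and 4.4.2]{EGAIII-I} carry over verbatim to superschemes, with the super Stein factorization already set up in the preceding paragraph, and you are essentially writing out what that transcription looks like.

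One point deserves care. Your claim that $(\bar f_{bos})_\ast\Oc_X=\Oc_Y$, i.e.\ that the bosonic reduction of the super Stein factorization coincides with the ordinary Stein factorization of $f_{bos}$, is not automatic: pushforward does not in general commute with passing to bosonic reductions (the comparison involves $R^1f_\ast\Jc_\Xcal$ and the discrepancy between $f_\ast\Jc_\Xcal$ and the ideal generated by odd sections of $f_\ast\Oc_\Xcal$). Fortunately you do not need it, and the $\Jc$-adic d\'evissage at the end is then also unnecessary. A cleaner route: classical ZMT gives only that $\bar f_{bos}$ is \emph{finite}, hence affine; since affineness of a superscheme morphism depends only on the underlying ordinary morphism, $\bar f$ is affine. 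Combined with $\bar f_\ast\Oc_\Xcal=\Oc_\Ycal$ (which you already have by construction of the Stein factorization), this forces $\bar f$ to be an isomorphism directly. Alternatively, one can bypass Stein factorization altogether: $f_{bos}$ finite $\Rightarrow$ $f$ affine, and then coherence of $f_\ast\Oc_\Xcal$ (Proposition~\ref{prop:finiteness2}) gives finiteness of $f$.
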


\subsection{\'Etale topology and faithfully flat descent for superschemes}

If $f\colon \Xcal\to\Sc$ is a flat morphism of superschemes, then $f_\bos\colon X\to S$ is also flat. If $f$ is locally of finite presentation, this implies that it is \emph{universally open}, and that the image $\Ima f$ is open in $S$.

\begin{defin}\label{def:etale}
A morphism $f\colon \Xcal\to\Sc$ is \'etale if it is smooth of relative dimension $(0,0)$ over the open sub-superscheme $\Ima f$ of $\Sc$.
An \'etale covering is a surjective \'etale morphism.
\end{defin}

If $f\colon \Xcal\to\Sc$ is smooth or \'etale, the induced morphism $f\colon X\to S$ between the underlying ordinary schemes is smooth or \'etale as well.

\begin{defin}[\cite{DoHeSa97}, Def.\,7]\label{def:etaletop}
The \emph{\'etale topology} is the Grothendieck topology on the category $\Sf$ of superschemes whose coverings are the surjective \'etale morphisms.
\end{defin}

This allows to generalize to supergeometry some standard constructions and definitions  about the \'etale topology of schemes. In particular, we have the following definition (see \cite{DoHeSa97}).
\begin{defin}\label{def:etalerel} 
An (Artin) algebraic superspace is a sheaf $\Xf$ for the \'etale topology of superschemes that can be expressed as the categorical quotient of an \'etale equivalence relation of superschemes
$$
\Tc \rra \U \to \Xf\,.
$$
(here we are confusing superschemes and their functors of  points).
\end{defin}

As in ordinary geometry, the notion of (Artin) algebraic superspace is quite useful when it comes to study moduli problems for superschemes. One nice example is the construction of the moduli or RR-SUSY curves. Under certain conditions, the functor of relative RR-SUSY curves of genus $g$ along a (nonramified) RR puncture of degree $\nf_R$   with $\nf_{NS}$ NS punctures is representable by a separated algebraic superspace $ \Xf_{\nf_{NS},\nf_{R}}$ \cite[Thm.\,5.1, 5.2]{BrHR21}. Later on we shall come back to the construction of moduli spaces of supercurves and maps as superstacks (Theorem \ref{thm:moduli} and Section \ref{s:stsupermaps}).
Actually,  as in ordinary geometry, some moduli problems cannot be represented by (Artin) algebraic superspaces, and one has to resort to more general structures. Usually one treats these ``supermoduli spaces'' as stacks on the category of superschemes; a description of superstacks is given in \cite{CodViv17}.

For various operations between superschemes, algebraic superspaces and stacks some notions of local character and descent properties    are necessary. We review here some of them.

\begin{defin} A property P of  morphisms of superschemes is local on the target  for the \'etale topology if a morphism $f\colon \Xcal \to \Sc$ has the property P if and only if for every \'etale covering $\phi\colon \Tc\to \Sc$, the fibre product $\phi^\ast f\colon \Xcal\times_\Sc \Tc \to \Tc$ has the property P.

A property P of morphisms of superschemes is local on the source  for the \'etale topology if a morphism $f\colon \Xcal \to \Sc$ has the property P  if and only if for every \'etale covering $\phi\colon \Tc\to \Xcal$, the composition $f\circ\phi\colon \Tc \to \Sc$ has the property P. 
\end{defin}

\begin{prop}\label{p:etalelocal}
The properties of being flat, smooth and \'etale, are local on the target and on the source for the \'etale topology of superschemes. The properties of being separated and proper are local on the target for the same topology.
\end{prop}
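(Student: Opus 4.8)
The plan is to handle flatness, smoothness and étaleness by faithfully flat descent applied to stalks of structure sheaves and to sheaves of relative differentials, and to handle separatedness and properness by reducing to the underlying ordinary schemes through Proposition~\ref{prop:proper}. The common input is the following package of compatibilities, all of which are routine but must be set up first: an étale covering $\phi\colon\Tc\to\Sc$ of superschemes induces an étale covering $\phi_{\bos}\colon T\to S$ of ordinary schemes (recorded just above Definition~\ref{def:etaletop}; surjectivity passes to $\phi_{\bos}$ because $\Sc$ and $S$ share the same underlying topological space), $\phi_{\bos}$ is in particular \emph{faithfully} flat, formation of bosonic reductions commutes with fibre products, so $(\Xcal\times_\Sc\Tc)_{\bos}\cong X\times_S T$ with $(\phi^\ast f)_{\bos}=\phi_{\bos}^\ast f_{\bos}$ (on affines this is the identity $(\As\otimes_\Cs\Bs)/J\cong A\otimes_C B$), and, dually, if $z\in\Xcal\times_\Sc\Tc$ lies over $x\in\Xcal$, $t\in\Tc$, $s\in\Sc$, then $\Oc_{\Xcal\times_\Sc\Tc,z}$ is a localization of $\Oc_{\Xcal,x}\otimes_{\Oc_{\Sc,s}}\Oc_{\Tc,t}$. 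I will also use repeatedly that a flat local homomorphism of local superrings is automatically faithfully flat (reduce to the ungraded statement on bosonic reductions, then lift along the $\Jc$-adic filtration by graded Nakayama).

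For separatedness and properness only locality on the target is claimed, and Proposition~\ref{prop:proper} does the work: $f$ is separated (resp.\ proper) iff $f_{\bos}$ is, and by the displayed compatibilities $\phi^\ast f$ is separated (resp.\ proper) iff $\phi_{\bos}^\ast f_{\bos}$ is; since separatedness and properness of scheme morphisms are classically local on the target for the étale topology, and ``locally of finite type'' is likewise local on the target, the claim follows. One neither expects nor claims locality on the source here, exactly as in the classical case. For flatness the substantive direction is again locality on the target: with $z\mapsto x,t,s$ as above and $\phi^\ast f$ flat, $\Oc_{\Sc,s}\to\Oc_{\Tc,t}$ is a flat local homomorphism, hence faithfully flat, so $\Oc_{\Xcal,x}\to\Oc_{\Xcal,x}\otimes_{\Oc_{\Sc,s}}\Oc_{\Tc,t}$ is flat and, after the localization identifying the target with $\Oc_{\Xcal\times_\Sc\Tc,z}$, we get a flat \emph{local} (hence faithfully flat) map $\Oc_{\Xcal,x}\to\Oc_{\Xcal\times_\Sc\Tc,z}$; since $\Oc_{\Xcal\times_\Sc\Tc,z}$ is flat over $\Oc_{\Sc,s}$ (its flatness over $\Oc_{\Tc,t}$ is the hypothesis, and $\Oc_{\Tc,t}$ is flat over $\Oc_{\Sc,s}$), faithfully flat descent gives $\Oc_{\Xcal,x}$ flat over $\Oc_{\Sc,s}$; as $\phi^\ast f$ is surjective onto $\Xcal$, every $x$ is reached. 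The converse is stability of flatness under base change. Locality on the source is the same argument run once: for $\phi\colon\Tc\to\Xcal$ an étale covering with $f\circ\phi$ flat and $x\in\Xcal$, pick $t\in\Tc$ over $x$; then $\Oc_{\Xcal,x}\to\Oc_{\Tc,t}$ is faithfully flat and $\Oc_{\Tc,t}$ is flat over $\Oc_{\Sc,f(x)}$, whence $\Oc_{\Xcal,x}$ is flat over $\Oc_{\Sc,f(x)}$; the converse is stability under composition.

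Smoothness and étaleness then follow from Definition~\ref{def:smooth} (and Definition~\ref{def:etale}) by verifying its three clauses separately. Clause (1), ``locally of finite type'', is local on the target and on the source for the étale topology (this reduces to the ungraded statement together with the finite generation of the odd ideal; cf.\ \cite{BrHRPo20}). Clause (2), flatness, was just settled. Clause (3), that $\Omega_{\Xcal/\Sc}$ is locally free of rank $m\vert n$, passes up and down the covering because relative differentials commute with base change, $\Omega_{(\Xcal\times_\Sc\Tc)/\Tc}\cong p^\ast\Omega_{\Xcal/\Sc}$, while for a covering $\phi\colon\Tc\to\Xcal$ the cotangent sequence of $\Tc\to\Xcal\to\Sc$ with $\Omega_{\Tc/\Xcal}=0$ gives $\phi^\ast\Omega_{\Xcal/\Sc}\cong\Omega_{\Tc/\Sc}$; and being locally free of a fixed rank for a coherent sheaf descends along a faithfully flat morphism. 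The relative dimension $m\vert n$ is unaffected, since the fibres of $\phi^\ast f$ (resp.\ of $f\circ\phi$) differ from those of $f$ only by base change along a finite separable field extension (resp.\ by an étale morphism of relative dimension $0\vert0$), which preserves dimension. Étale morphisms are the case $m\vert n=0\vert0$, so that statement is immediate from the smooth case once one notes that $\Omega=0$ itself descends and ascends along the covering.

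The classical backbone — étale-locality of finite type, flatness, smoothness, separatedness and properness for ordinary scheme morphisms — is quoted wholesale, so the only genuinely new content, and the place where care is needed, is the $\Z_2$-graded bookkeeping: that bosonic reduction commutes with fibre products, that stalks of structure sheaves of a super fibre product are the expected localized tensor products, and that faithfully flat descent of module flatness and of local-freeness of fixed rank holds for graded modules. None of this is difficult given the standing (local) noetherian hypotheses and the finite generation of the odd ideal; it is also the only spot where one implicitly uses that a surjective étale morphism induces a genuinely \emph{faithfully} flat map on bosonic reductions, not merely a flat one.
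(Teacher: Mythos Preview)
Your proposal is correct and follows the same underlying strategy as the paper: one direction is stability under base change (and composition), the other is faithfully flat descent, using that \'etale covers are faithfully flat. The paper's own proof consists of exactly those two sentences and nothing more; your write-up is a careful unpacking of what they entail. The one mild deviation is that for separatedness and properness you route explicitly through Proposition~\ref{prop:proper} to reduce to the classical \'etale-local statement for ordinary schemes, whereas the paper's proof does not single this out (it simply folds separated and proper into the general ``stable under base change, then use faithful flatness'' slogan). Both routes are valid; yours has the merit of making the $\Z_2$-graded bookkeeping and the role of the bosonic reduction fully visible, while the paper's buys brevity at the cost of leaving the reader to supply exactly the details you have written.
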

\begin{proof}
The properties of being flat, separated, proper,  smooth and \'etale are stable  under  base change.  The remaining   properties follow from the fact that \'etale covers are faithfully flat. 
\end{proof}

Grothendieck's   faithfully flat descent   \cite[Exp.~ VIII]{SGA1} can be  quite easily extended to superschemes.  We state here the main results.

\begin{prop}[Descent for homomorphisms]\label{prop:morphdescent} Let $p\colon \Tc\to \Sc$ be a faithfully flat quasi-compact morphism of superschemes, $\Rcal=\Tc\times_\Sc\Tc$. Denote by $p_1,p_2\colon \Rcal\rra\Tc$ the projections. Let $\M$, $\Nc$ be (graded) quasi-coherent sheaves on $\Sc$, $\M'=p^\ast\M$, $\Nc'=p^\ast\Nc$ and $\M''=p_1^\ast\M'=p_2^\ast\M'$, $\Nc''=p_1^\ast\Nc'=p_2^\ast\Nc'$. The sequence 
$$
\Hom_\Sc(\M,\Nc) \to \Hom_\Tc(\M',\Nc') \rra \Hom_\Rcal(\M'',\Nc'')
$$
of (homogeneous) homomorphisms induced by the projections is exact, namely,  there is a one-to-one correspondence between $\Hom_\Sc(\M,\Nc)$ and the coincidence locus  of the pair of arrows.
\qed \end{prop}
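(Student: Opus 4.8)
The plan is to reduce the statement to the classical faithfully flat descent for homomorphisms of quasi-coherent sheaves over ordinary schemes, using the fact that everything in sight is built from the underlying topological data plus a $\Z_2$-grading. First I would unwind the definitions: for a faithfully flat quasi-compact morphism $p\colon \Tc\to\Sc$ of superschemes, the underlying map $p\colon T\to S$ of schemes is faithfully flat and quasi-compact, and $\Rcal=\Tc\times_\Sc\Tc$ has underlying scheme $R=T\times_S T$ with structure sheaf obtained by the graded tensor product, so that $p_1,p_2\colon R\rra T$ are the usual projections. A (graded) quasi-coherent $\Oc_\Sc$-module $\M$ decomposes as $\M=\M_+\oplus\M_-$; since the grading is $\Oc_\Sc$-linear and $\Oc_{\Sc,+}$-module structures are preserved, it is convenient to pass to the bosonic quotient $\Sc/\Gamma=(S,\Oc_{\Sc,+})$ when needed, but in fact one can argue directly on $\Sc$.

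The key steps, in order, are: (i) establish exactness on the left, i.e. injectivity of $\Hom_\Sc(\M,\Nc)\to\Hom_\Tc(\M',\Nc')$; this follows because $p^\ast$ is faithfully exact on quasi-coherent sheaves (as $p$ is faithfully flat) so a homogeneous homomorphism that pulls back to zero is zero, exactly as in \cite[Exp.~VIII]{SGA1}. (ii) Show that an element of $\Hom_\Tc(\M',\Nc')$ equalized by $p_1^\ast$ and $p_2^\ast$ descends: one applies the classical descent statement componentwise. Concretely, the $\Z_2$-grading on all the sheaves and the homogeneity requirement on the homomorphisms mean that $\Hom_\Sc(\M,\Nc)=\Hom_\Sc(\M_+,\Nc_+)\oplus\Hom_\Sc(\M_-,\Nc_-)$ as groups of degree-zero maps, and likewise over $\Tc$ and $\Rcal$; each summand is a $\Hom$ of ordinary quasi-coherent modules over the ringed space $(S,\Oc_{\Sc})$ with its ordinary (ungraded) structure, to which Grothendieck's descent applies verbatim because flatness of $\Oc_{\Tc,x}$ over $\Oc_{\Sc,f(x)}$ as superrings is the same as flatness of the underlying modules. (iii) Check the cocycle/compatibility bookkeeping: an equalized homomorphism over $\Tc$ automatically satisfies the cocycle condition over the triple fibre product because there is no twisting here (we are descending morphisms of already-given sheaves, not the sheaves themselves), so the gluing is immediate once descent of the underlying module map is known, and the resulting $\Oc_\Sc$-linear map is homogeneous since its pullback is.

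The main obstacle, such as it is, is purely formal: one must make sure that ``quasi-coherent'' and ``flat'' in the graded setting are compatible with the ungraded notions under the forgetful functor, and that taking $\pm$-parts commutes with pullback along $p$ (it does, since $p^\sharp$ is homogeneous, so $p^\ast(\M_\pm)=(p^\ast\M)_\pm$). Once this compatibility is recorded — which is already implicit in the construction of (quasi-)coherent $\Oc_\Xcal$-modules referred to from \cite{BrHRPo20} — the proof is a line-by-line transcription of \cite[Exp.~VIII]{SGA1}. I would therefore present the proof as: reduce to even and odd parts, invoke \cite[Exp.~VIII]{SGA1} for each part over the underlying scheme, and reassemble, noting that homogeneity of the descended map is automatic. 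No genuinely new idea beyond the classical case is needed, which is why the statement is given with \qed rather than a written-out argument.
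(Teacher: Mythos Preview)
Your proposal is correct and matches the paper's treatment: the proposition is stated with \qed\ and no argument, the preceding sentence simply asserting that Grothendieck's descent \cite[Exp.~VIII]{SGA1} ``can be quite easily extended to superschemes.'' One minor imprecision worth fixing: the decomposition $\Hom_\Sc(\M,\Nc)=\Hom_\Sc(\M_+,\Nc_+)\oplus\Hom_\Sc(\M_-,\Nc_-)$ is not literally true as written, since $\M_\pm$ are only $\Oc_{\Sc,+}$-modules and a pair of $\Oc_{\Sc,+}$-linear maps need not glue to an $\Oc_\Sc$-linear one without compatibility with the $\Oc_{\Sc,-}$-action; this is easily repaired either by checking that compatibility descends (faithfulness of $p^\ast$), or---more cleanly---by observing that the Amitsur-complex argument in \cite[Exp.~VIII]{SGA1} uses only faithful flatness and the tensor formalism, so it transcribes verbatim to supercommutative rings with no decomposition needed.
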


\begin{prop}[Descent for modules]\label{prop:moddescent} Let $p\colon \Tc\to \Sc$ be a faithfully flat quasi-compact morphism of superschemes, and let  $\Rcal=\Tc\times_\Sc\Tc$ and $(p_1,p_2)\colon \Rcal\rra\Tc$ be  the projections. Let $\M$ be a graded quasi-coherent sheaf on $\Tc$ with  descent data, that is, an isomorphism  $\phi\colon p_1^\ast\M\iso p_2^\ast \M$ such that,
if $\U = \Tc\times_\Sc\Tc\times_\Sc\Tc$, and $p_{12}$, $p_{23}$, $p_{13}$ are the three projections onto $\Rcal$, the condition
$p_{23}^\ast \phi \circ p_{12}^\ast \phi = p_{13}^\ast\phi$
holds. Then, there is a graded quasi-coherent sheaf $\Nc$ on $\Sc$ such that $\M\simeq p^\ast\Nc$.
\qed \end{prop}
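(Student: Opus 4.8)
The plan is to reproduce Grothendieck's classical effectivity proof for faithfully flat descent (\cite[Exp.~VIII]{SGA1}), verifying at each step that the $\Z_2$-grading is carried along. The reason no new idea is needed is that flatness and faithful flatness of a homomorphism of superrings are detected exactly as in commutative algebra --- a complex of graded $\As$-modules is exact if and only if it stays exact after $-\otimes_\As\Bs$, when $\As\to\Bs$ is faithfully flat --- while tensor products, kernels and cokernels of homogeneous maps of graded modules are again graded; hence every object and arrow produced by the classical argument is automatically graded, resp.~homogeneous.

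First I would reduce to the affine case. Once $\Nc$ exists it is unique up to a unique isomorphism, by Proposition~\ref{prop:morphdescent} applied to $\M$ together with its descent datum, so it suffices to build $\Nc$ Zariski-locally on $\Sc$ (the local pieces then glue by the same proposition). Thus we may take $\Sc=\SSpec\As$. Since $p$ is quasi-compact, $\Tc$ admits a finite affine open cover $\{\SSpec\Bs_i\}$; replacing $\Tc$ by the affine superscheme $\coprod_i\SSpec\Bs_i$ and pulling the descent datum back along the faithfully flat quasi-compact cover $\coprod_i\SSpec\Bs_i\to\Tc$ (Proposition~\ref{prop:morphdescent} will later let us push the resulting isomorphism back down to $\Tc$), we are reduced to $\Tc=\SSpec\Bs$ with $\As\to\Bs$ faithfully flat.

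The core is then pure superalgebra. Let $M$ be the graded $\Bs$-module corresponding to $\M$; the descent datum becomes a homogeneous isomorphism $\varphi\colon M\otimes_\As\Bs\iso\Bs\otimes_\As M$ satisfying the cocycle identity over $\Bs\otimes_\As\Bs\otimes_\As\Bs$. Set
\[
N=\{\,m\in M:\ \varphi(m\otimes 1)=1\otimes m\,\},
\]
a graded $\As$-submodule of $M$. I then claim the multiplication map $c\colon\Bs\otimes_\As N\to M$ is an isomorphism of graded $\Bs$-modules: by faithful flatness it is enough to check this after $\Bs\otimes_\As-$, and over $\Bs$ the equalizer cutting out $N$ acquires a section coming from the multiplication $\Bs\otimes_\As\Bs\to\Bs$, hence is a split equalizer and is preserved by further flat base change; the cocycle identity identifies $\Bs\otimes_\As(\Bs\otimes_\As N)$ with $\Bs\otimes_\As M$ compatibly with $c$, so $c$ is an isomorphism. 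This is line-for-line the Amitsur-complex computation of \cite[Exp.~VIII]{SGA1}. (Alternatively one could invoke descent of quasi-coherent modules along faithfully flat covers in a ringed site, applied to $\Sc$ with its sheaf of superrings, but the hands-on route is more transparent.)

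Finally, letting $\Nc$ be the quasi-coherent sheaf on $\SSpec\As$ attached to $N$, the map $c$ gives $p^\ast\Nc\cong\M$; unwinding the two reductions --- gluing the local $\Nc$'s over an affine cover of $\Sc$, and descending the isomorphism along $\coprod_i\SSpec\Bs_i\to\Tc$, both via Proposition~\ref{prop:morphdescent} --- produces a graded quasi-coherent sheaf $\Nc$ on $\Sc$ with $\M\iso p^\ast\Nc$. I do not expect a genuine obstacle here: the only real work is the bookkeeping of the reductions and the recurring observation that faithful flatness of superring maps, and the grading of all modules in sight, are both inherited for free from the commutative/ungraded picture.
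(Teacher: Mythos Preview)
Your proposal is correct and follows exactly the approach the paper indicates: the paper gives no proof beyond a bare \qed, prefaced by the remark that Grothendieck's faithfully flat descent from \cite[Exp.~VIII]{SGA1} ``can be quite easily extended to superschemes,'' and you have written out precisely that extension, checking that the $\Z_2$-grading is preserved throughout the standard Amitsur/split-equalizer argument. One small remark: the paper's statement omits the cocycle condition from its description of a descent datum, but you correctly supply it, since without it the claim would be false.
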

\marginnote{Sistemati errori di battitura}

 For every superscheme $\Xcal$  we denote by $H(\Xcal)$ the set of all   closed sub-superschemes of $\Xcal$.
\begin{prop}[Descent for closed sub-superschemes]\label{prop:subdescent}
 Let $p\colon \Tc\to \Sc$ be a faithfully flat quasi-compact morphism of superschemes, The   sequence of sets
 $$
 H(\Sc)\to H(\Tc) \rra H(\Rcal)
 $$
 is exact.
\qed\end{prop}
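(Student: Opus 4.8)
The plan is to deduce descent for closed sub-superschemes from the two descent results already established, namely Proposition \ref{prop:moddescent} (descent for quasi-coherent modules) and Proposition \ref{prop:morphdescent} (descent for homomorphisms), exactly as in the classical case \cite[Exp.~VIII]{SGA1}. A closed sub-superscheme $\Zc\subset\Sc$ is the same datum as a quasi-coherent ideal sheaf $\Ic\subset\Oc_\Sc$, equivalently as a quasi-coherent quotient $\Oc_\Sc\twoheadrightarrow\Oc_\Sc/\Ic$; so the statement amounts to saying that giving such a quotient of $\Oc_\Sc$ is the same as giving a quotient of $\Oc_\Tc$ whose two pullbacks to $\Rcal$ agree (as quotients, i.e.\ with the same kernel). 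Exactness of the sequence of sets means two things: the first map is injective, and its image is the equalizer of the pair of arrows.

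First I would prove injectivity. If two closed sub-superschemes $\Zc_1,\Zc_2\subset\Sc$ with ideals $\Ic_1,\Ic_2$ pull back to the same closed sub-superscheme of $\Tc$, then $p^\ast\Ic_1=p^\ast\Ic_2$ as subsheaves of $p^\ast\Oc_\Sc=\Oc_\Tc$; since $p$ is faithfully flat, pullback of the inclusion $\Ic_j\hookrightarrow\Oc_\Sc$ stays injective, and faithful flatness lets one descend the equality of images, so $\Ic_1=\Ic_2$ and $\Zc_1=\Zc_2$. Concretely this is the standard argument that for a faithfully flat $A\to B$ and submodules $N_1,N_2\subset M$, if $N_1\otimes_A B = N_2\otimes_A B$ inside $M\otimes_A B$ then $N_1=N_2$; nothing changes in the $\Z_2$-graded setting because tensor products and kernels are formed in the graded sense and faithful flatness is used only module-theoretically.

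Next, the descent (surjectivity onto the equalizer). Suppose given a closed sub-superscheme $\Wc\subset\Tc$, with ideal $\Jc\subset\Oc_\Tc$ and quotient superalgebra $\Oc_\Wc=\Oc_\Tc/\Jc$, such that $p_1^\ast\Wc=p_2^\ast\Wc$ in $H(\Rcal)$; i.e.\ the canonical isomorphism $p_1^\ast\Oc_\Tc\iso p_2^\ast\Oc_\Tc$ carries $p_1^\ast\Jc$ onto $p_2^\ast\Jc$, hence induces an isomorphism $\varphi\colon p_1^\ast\Oc_\Wc\iso p_2^\ast\Oc_\Wc$. One checks $\varphi$ satisfies the cocycle condition on $\Tc\times_\Sc\Tc\times_\Sc\Tc$ (it is a restriction of the canonical descent datum on $\Oc_\Tc$, which satisfies it). By Proposition \ref{prop:moddescent} applied to the quasi-coherent sheaf $\Oc_\Wc$ on $\Tc$ together with $\varphi$, there is a quasi-coherent sheaf $\Nc$ on $\Sc$ with $p^\ast\Nc\cong\Oc_\Wc$ compatibly with descent data. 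It remains to promote $\Nc$ to a quotient superalgebra of $\Oc_\Sc$: the surjection $\Oc_\Tc\twoheadrightarrow\Oc_\Wc$ is a descent-compatible morphism of quasi-coherent sheaves, so by Proposition \ref{prop:morphdescent} it descends to a morphism $\Oc_\Sc\to\Nc$; faithful flatness of $p$ then shows this map is surjective (its cokernel pulls back to $0$) and that $\Nc$ inherits a superalgebra structure making it $\Oc_\Sc/\Ic$ for $\Ic=\ker$. The closed sub-superscheme $\Zc\subset\Sc$ cut out by $\Ic$ satisfies $p^\ast\Zc=\Wc$, and by the injectivity already proved it is the unique such; this exhibits $\Wc$ in the image of $H(\Sc)\to H(\Tc)$.

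The only genuine point requiring care — the main obstacle, such as it is — is checking that the descended object $\Nc$ is again a quasi-coherent \emph{ideal/quotient}, i.e.\ that the superalgebra structure and the surjectivity descend, rather than just the underlying module. This is handled by Proposition \ref{prop:morphdescent}, which descends the multiplication map $\Nc''\otimes\Nc''\to\Nc''$-type data and the quotient map, combined with the faithful flatness of $p$ (a morphism of quasi-coherent sheaves is surjective iff it is so after a faithfully flat base change — its cokernel is quasi-coherent and vanishes after pullback, hence vanishes). In the $\Z_2$-graded context one must keep track of parities throughout, but since all the cited descent statements are stated for graded quasi-coherent sheaves and homogeneous morphisms, no new difficulty arises; the classical proof of \cite[Exp.~VIII]{SGA1} goes through verbatim.
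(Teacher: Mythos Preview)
Your proposal is correct and follows exactly the route the paper has in mind: the paper gives no proof at all, simply marking the statement with \qed after remarking that Grothendieck's faithfully flat descent \cite[Exp.~VIII]{SGA1} ``can be quite easily extended to superschemes,'' and your argument is precisely that extension, deducing descent for closed sub-superschemes from Propositions~\ref{prop:morphdescent} and~\ref{prop:moddescent} as in the classical case. If anything, you have supplied more detail than the paper does.
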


\subsection{Cohen-Macaulay and Gorenstein morphisms}\label{ss:CM}

\blue{We recall that from this point on, all morphisms are locally of finite type, and then locally of finite presentation as we only consider locally noetherian superschemes.}

For any morphism $f\colon \Xcal\to\Sc$ of superschemes we consider the diagram 
\begin{equation}\label{eq:diagram}
\xymatrix{
X  \ar@{^{(}->}[r]^j\ar[rd]_{f_{bos}}& \Xcal_S \ar@{^{(}->}[r] ^i\ar[d]^{f_S} & \Xcal\ar[d]^f\\
& S  \ar@{^{(}->}[r] ^i&\Sc
}
\end{equation}
where $X$ and $S$ denote the  underlying  bosonic schemes.

By Grothendieck duality for superschemes  \cite[Sec.\,3.5]{BrHRPo20}, if $\Xcal$ and $\Sc$ are quasi-compact and separated an $f$ is quasi-compact and separated as well, the direct image functor $\bR f_\ast\colon D(\Xcal)\to D(\Sc)$  between the derived categories of (super) quasi-coherent sheaves has a right adjoint  $f^!\colon D(\Sc)\to D(\Xcal)$ \cite[Prop. 3.22]{BrHRPo20}\footnote{In recent literature about Grothendieck duality for schemes, the right adjoint to $\bR f_\ast$ is denoted by $f^\times$ and the notation $f^!$ is used for a pseudofunctor that coincides with $f^\times$ for proper morphisms and with $f^\ast$ when $f$ is an open immersion}. The object $\Dcal^\bullet_f:=f^!\Oc_\Sc$ in $D(\Xcal)$ is called the \emph{dualizing complex} of $f$.

A quite useful elementary property is the compatibility with flat base change: whenever one has a cartesian diagram
$$
\xymatrix{\Xcal_\Tc \ar[r]^{\phi_\Xcal}\ar[d]^{f_\Tc} & \Xcal \ar[d]^f \\
\Tc \ar[r]^\phi & \Sc
}
$$
where $\phi$ is \emph{flat} and $f$ is  proper, 
there is a functorial isomorphism
$$
f_\Tc^!(\phi^\ast \Nc^\bullet)\simeq \phi_\Xcal^\ast f^!(\Nc^\bullet) 
$$
for any $\Nc^\bullet$ in $D(\Sc)$ of finite homological dimension \cite[Prop. 3.27]{BrHRPo20}. In particular
\begin{equation}\label{eq:flatbasechange}
f_\Tc^!(\Oc_\Tc)\simeq \phi_\Xcal^\ast f^!(\Oc_\Sc) =  f^!(\Oc_\Sc)_{\Xcal_\Tc}\,.
\end{equation}

In the purely bosonic case, a flat and proper scheme morphism $f_{bos}\colon X\to S$ of relative dimension $m$ has the property that the geometric fibres $X_s$ are Cohen-Macaulay (CM) schemes if and only if 
the cohomology sheaves of the dualizing complex $f_{bos}^!\Oc_S$ vanish, except the $(-m)$-th one, and this is flat over $S$. Recall that a scheme $X$ of dimension $m$ is CM if and only if $\Ext^i_{\Oc_{X,x}}(\kappa(x), \Oc_{X,x})=0$ for all $i\neq m$ and every (closed) point  $x\in X$. This  is the motivation for the following definition.

\begin{defin}\label{def:CM}  A morphism $f\colon \Xcal\to\Sc$ of superschemes is Cohen-Macaulay (CM) if it is proper\footnote{We assume that $f$ is proper because our definition of CM morphism is given in terms of the dualizing sheaf.}and flat and  $$
f^!\Oc_\Sc\simeq \omega_f[m]
$$
for a  \blue{coherent}  sheaf $\omega_f$ which is flat over $\Sc$, where $m$ is the relative even dimension. The sheaf $\omega_f$
is called the \emph{relative dualizing sheaf}. A morphism $\pi\colon \Xcal\to\Sc$ of superschemes is Gorenstein if it is CM and the relative dualizing sheaf is a line bundle.
\end{defin}

We  use also the notation $\omega_{\Xcal/\Sc}$ for $\omega_f$. When $\Sc$ reduces to a single point $s$ we will write $\omega_{\Xcal_s}$.

 A smooth morphism is Gorenstein and its dualizing sheaf is the relative Berezinian \cite{OgPe84, Penk83}, \cite[Prop. 3.35]{BrHRPo20}.

A first consequence of Equation \eqref{eq:flatbasechange} is the following:
\begin{lemma}
For a given proper, flat and  surjective  (i.e., faithfully flat) morphism $\Tc\to\Sc$, the morphism $f\colon \Xcal\to\Sc$ is CM (resp.~Gorenstein) if and only if $f_\Tc\colon \Xcal_\Tc\to\Tc$ is CM (resp.~Gorenstein). 
\qed
\end{lemma}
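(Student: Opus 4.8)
The plan is to reduce the statement to the flat base change formula for the twisted inverse image functor, Equation~\eqref{eq:flatbasechange}, together with the fact that flatness is stable under base change and descends along faithfully flat quasi-compact morphisms. Write $\phi\colon\Tc\to\Sc$ for the given faithfully flat morphism, and form the cartesian square with $\Xcal_\Tc=\Xcal\times_\Sc\Tc$, projection $\phi_\Xcal\colon\Xcal_\Tc\to\Xcal$, and $f_\Tc\colon\Xcal_\Tc\to\Tc$. Since $\phi$ is flat and $f$ is locally of finite type, \eqref{eq:flatbasechange} gives a functorial isomorphism $f_\Tc^!\Oc_\Tc\simeq\phi_\Xcal^\ast f^!\Oc_\Sc=\Dcal^\bullet_f{}_{|\Xcal_\Tc}$ in $D(\Xcal_\Tc)$.

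First I would dispose of the direction ``$f$ CM $\Rightarrow$ $f_\Tc$ CM''. If $f$ is proper and flat with $f^!\Oc_\Sc\simeq\om_f[m]$ for $\om_f$ flat over $\Sc$, then properness and flatness of $f_\Tc$ follow from stability of these properties under base change (Proposition~\ref{prop:sorite}(1) and the remark that flatness is stable under base change). Pulling back the quasi-isomorphism along $\phi_\Xcal$ and using \eqref{eq:flatbasechange} yields $f_\Tc^!\Oc_\Tc\simeq\phi_\Xcal^\ast(\om_f[m])=(\phi_\Xcal^\ast\om_f)[m]$, and $\phi_\Xcal^\ast\om_f$ is flat over $\Tc$ because flatness over the base is preserved by base change (this is the ``flatness of a sheaf relative to the base is stable under base change'' statement, applied to the square). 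Hence $f_\Tc$ is CM with $\om_{f_\Tc}\simeq\phi_\Xcal^\ast\om_f$. For the Gorenstein refinement, note that $\phi_\Xcal^\ast$ carries line bundles to line bundles, so if $\om_f$ is invertible so is $\om_{f_\Tc}$.

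For the converse ``$f_\Tc$ CM $\Rightarrow$ $f$ CM'', I would use faithful flatness to descend each ingredient. Properness and flatness of $f$ follow from Proposition~\ref{p:etalelocal}-type reasoning: more elementarily, $\phi$ faithfully flat quasi-compact implies that a morphism having a property after base change along $\phi$ has it already, for flatness by the local criterion and for properness since $\phi$ is an fpqc cover (one may also invoke that separatedness and properness only depend on the bosonic morphisms, Proposition~\ref{prop:proper}, together with ordinary fpqc descent of properness on $S$). Given this, $f^!\Oc_\Sc=\Dcal^\bullet_f$ is a complex whose pullback $\phi_\Xcal^\ast\Dcal^\bullet_f\simeq f_\Tc^!\Oc_\Tc$ is concentrated in degree $-m$ with flat cohomology there; since $\phi_\Xcal$ is faithfully flat and quasi-compact, the cohomology sheaves $\mathcal H^i(\Dcal^\bullet_f)$ satisfy $\phi_\Xcal^\ast\mathcal H^i(\Dcal^\bullet_f)\simeq\mathcal H^i(\phi_\Xcal^\ast\Dcal^\bullet_f)$ (flat pullback is exact), which vanish for $i\neq -m$, hence $\mathcal H^i(\Dcal^\bullet_f)=0$ for $i\neq-m$ by faithful flatness. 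Setting $\om_f=\mathcal H^{-m}(\Dcal^\bullet_f)$ we get $f^!\Oc_\Sc\simeq\om_f[m]$, and $\om_f$ is flat over $\Sc$ because $\phi_\Xcal^\ast\om_f\simeq\om_{f_\Tc}$ is flat over $\Tc$ and flatness descends along the faithfully flat base change $\phi$. Finally, $\om_f$ is a line bundle whenever $\om_{f_\Tc}$ is, since being locally free of rank $1$ for a (quasi-)coherent sheaf descends along faithfully flat quasi-compact morphisms (Proposition~\ref{prop:moddescent} plus the fact that rank is detected after faithfully flat base change).

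The step I expect to be the main obstacle is the descent of the degree-concentration and flatness of the dualizing complex in the converse direction: one must be careful that the isomorphism in \eqref{eq:flatbasechange} is compatible with taking cohomology sheaves and that ``flat over the base'' is an fpqc-local property for (super) quasi-coherent sheaves, which in the super setting should follow by applying the bosonic statements componentwise to $\om_f=(\om_f)_+\oplus(\om_f)_-$ and using that $\phi_\Xcal$, $\phi$ are even morphisms. Everything else is a formal consequence of the sorites already recorded (Propositions~\ref{prop:proper}, \ref{prop:sorite}, \ref{p:etalelocal}, \ref{prop:moddescent}) and of \eqref{eq:flatbasechange}.
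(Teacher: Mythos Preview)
Your proposal is correct and follows essentially the same approach as the paper: the lemma is stated immediately after Equation~\eqref{eq:flatbasechange} as ``a first consequence'' of it, with no further argument given beyond the \qed. Your write-up simply unpacks the details of both implications (stability under base change for the forward direction, and faithfully flat descent of vanishing, flatness, and invertibility for the converse) that the paper leaves implicit.
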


This is the case when $\Tc\to\Sc$  is a Zariski or   \'etale covering. One then has:
\begin{prop}
For a proper morphism, the condition of being CM (resp. Gorenstein) is local on the base, for both the Zariski and the \'etale topologies.
\qed
\end{prop}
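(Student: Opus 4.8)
The plan is to reduce the statement to the Lemma established just above, which says that for any faithfully flat morphism $\Tc\to\Sc$ a morphism $f\colon\Xcal\to\Sc$ is CM (resp.\ Gorenstein) if and only if its base change $f_\Tc\colon\Xcal_\Tc\to\Tc$ is. Fixing a proper $f\colon\Xcal\to\Sc$, what has to be shown is that $f$ is CM (resp.\ Gorenstein) precisely when, for every covering $\phi\colon\Tc\to\Sc$ in the relevant topology, the pulled-back morphism $f_\Tc$ is CM (resp.\ Gorenstein); here $f_\Tc$ is automatically proper, since properness is stable under base change by Proposition~\ref{prop:sorite}(1), so the property being tested is genuinely ``CM (resp.\ Gorenstein) among proper morphisms''.

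For the \'etale topology this is immediate: by Definition~\ref{def:etale} an \'etale covering $\phi\colon\Tc\to\Sc$ is a surjective \'etale morphism, in particular flat and surjective, hence faithfully flat, and the Lemma applied to $\phi$ yields both implications. For the Zariski topology, given an open cover $\Sc=\bigcup_i\U_i$, I would take $\Tc=\coprod_i\U_i$ (a finite subcover suffices, so $\Tc$ stays noetherian) with its tautological morphism $\phi\colon\Tc\to\Sc$; since open immersions are flat and the $\U_i$ cover $\Sc$, $\phi$ is faithfully flat. Now $\Xcal_\Tc=\coprod_i f^{-1}(\U_i)$ and $f_\Tc=\coprod_i f_{\U_i}$, and both flatness over the base (checked on stalks) and the required form $f^!\Oc_\Sc\simeq\omega_f[m]$ of the dualizing complex in Definition~\ref{def:CM} restrict compatibly to the $\U_i$ thanks to the flat base change isomorphism~\eqref{eq:flatbasechange}; hence $f_\Tc$ is CM (resp.\ Gorenstein) if and only if each $f_{\U_i}$ is. Composing this observation with the Lemma applied to $\phi$ gives the claim.

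I do not expect any real obstacle: the substantive input — the behaviour of $f^!$ under flat base change and the consequent Lemma — is already available, and what remains is bookkeeping. The only points deserving a line of justification are that a finite disjoint union of locally noetherian superschemes is again an admissible superscheme, that $f_\Tc$ inherits properness for free, and that the CM/Gorenstein conditions are checked componentwise on a disjoint union; none of these is delicate.
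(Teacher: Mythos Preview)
Your proposal is correct and follows essentially the same approach as the paper: the paper simply observes that Zariski and \'etale coverings are particular instances of faithfully flat morphisms, so the preceding Lemma applies directly. Your additional bookkeeping (properness under base change, componentwise checking on disjoint unions) is fine but more than the paper deems necessary to spell out.
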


The relationship between the dualizing complexes of $f_S\colon \Xcal \to S$ and $f_{bos}\colon X \to S$ is given by
$$
f_{bos}^!(\Oc_S)\simeq \bR\Homsh_{\Oc_{\Xcal_S}}(\Oc_X,f_S^!(\Oc_S))\,.
$$
The involved Ext sheaves are not easily computed. 

Some results about  the functor $f^!$ and compatibility with base change can be strengthened for proper superscheme morphisms.
Let us  start by noticing that arbitrary base change compatibility for the push-forward is still true when the morphism is flat. Namely, given a cartesian diagram of superscheme morphisms
$$
\xymatrix{ \Xcal_\Tc\ar[r]^{\phi_\Xcal}\ar[d]^{f_\Tc}& \Xcal \ar[d]^f
\\
\Tc\ar[r]^\phi& \Sc
}
$$
where either $\phi$ or $f$ is \emph{flat}, the base change morphism is an isomorphism in the derived category $D(\Tc)$:
\begin{equation}\label{eq:nonflatbc}
\bL\phi^\ast \bR f_\ast \simeq \bR f_{\Tc\ast}\bL \phi_\Xcal^\ast\,.
\end{equation}

This can be proven as in \cite[A.85]{BBH91}.

\begin{prop}\label{prop:f!} Let $f\colon\Xcal\to\Sc$ be a proper morphism of superschemes.
For every complex $\Nc^\bullet$ in $D(\Sc)$, there is an isomorphism
$$
f^!\Nc^\bullet\iso \bL f^\ast\Nc^\bullet\otimes^{\bL} f^!\Oc_\Sc
$$ 
in the derived category $D(\Xcal)$ (compare with \cite[Prop.\,3.27]{BrHRPo20}). Moreover, if $\Nc^\bullet$ have coherent cohomology sheaves, $f^!\Nc^\bullet$ has coherent cohomology sheaves as well.
\end{prop}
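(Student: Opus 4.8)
The plan is to reduce to the case where $f$ is \emph{proper}, where the isomorphism is (the superscheme version of) \cite[Prop.~3.27]{BrHRPo20}, by compactifying $f$ locally over $\Sc$; the argument then runs parallel to the classical one. First I would note that both sides are compatible with restriction, so the statement is local on $\Sc$ and on $\Xcal$. Locality on $\Sc$ follows from the flat base change isomorphism \eqref{eq:flatbasechange} (and its refinement \cite[Prop.~3.27]{BrHRPo20}) along an open immersion into $\Sc$, together with the obvious compatibility of $\bL f^\ast$ and $\otimes^\bL$ with restriction; locality on $\Xcal$ follows from pseudofunctoriality of $(-)^!$, the identity $u^!=u^\ast$ for an open immersion $u$, and monoidality of $\bL u^\ast$ (restricting the formula for $f$ along $u\colon\U\hookrightarrow\Xcal$ yields the formula for $f\circ u$, and an isomorphism that becomes one over every member of an open cover is an isomorphism). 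Hence we may assume $\Sc=\SSpec\As$ affine and noetherian and $\Xcal=\SSpec\Bs$ with $\Bs$ a finitely generated $\As$-superalgebra, so that $f$ is separated of finite type.

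\emph{Compactification.} Choosing finitely many homogeneous generators of $\Bs$ over $\As$, I realize $\Xcal$ as a closed sub-superscheme of $\As^{r\vert s}_\Sc=\As^r_\Sc\times_\Sc\As^{0\vert s}_\Sc$, hence as a locally closed sub-superscheme of $\bar\Ps:=\Ps^r_\Sc\times_\Sc\As^{0\vert s}_\Sc$ via $\As^r_\Sc\hookrightarrow\Ps^r_\Sc$, where $\bar\Ps\to\Sc$ is proper because its bosonic reduction is $\Ps^r_S\to S$ (Proposition \ref{prop:proper}). Let $\bar\Xcal\subseteq\bar\Ps$ be the scheme-theoretic closure of $\Xcal$; since $\Xcal$ is closed in the open sub-superscheme $\As^{r\vert s}_\Sc$ it equals $\bar\Xcal\cap\As^{r\vert s}_\Sc$, so $u\colon\Xcal\hookrightarrow\bar\Xcal$ is an open immersion, while $\bar f\colon\bar\Xcal\to\Sc$ is proper (a closed immersion is proper and proper morphisms compose, Proposition \ref{prop:sorite}). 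Thus $f=\bar f\circ u$. (Alternatively one may invoke a superscheme version of Nagata compactification.)

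\emph{Conclusion and coherence.} Applying (the superscheme version of) \cite[Prop.~3.27]{BrHRPo20} to the proper morphism $\bar f$ gives a functorial isomorphism $\bar f^!\Nc^\bullet\iso\bL\bar f^\ast\Nc^\bullet\otimes^\bL\bar f^!\Oc_\Sc$. Restricting along $u$ and using $u^!=u^\ast$, the pseudofunctor identities $f^!=u^!\bar f^!$ and $\bL f^\ast=u^\ast\bL\bar f^\ast$, and monoidality of $u^\ast$, I obtain
$$
f^!\Nc^\bullet=u^\ast\bar f^!\Nc^\bullet\iso u^\ast\bigl(\bL\bar f^\ast\Nc^\bullet\otimes^\bL\bar f^!\Oc_\Sc\bigr)\iso\bL f^\ast\Nc^\bullet\otimes^\bL u^\ast\bar f^!\Oc_\Sc=\bL f^\ast\Nc^\bullet\otimes^\bL f^!\Oc_\Sc ,
$$
which is the asserted isomorphism. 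For the final sentence: $\bL f^\ast\Nc^\bullet$ has coherent cohomology sheaves whenever $\Nc^\bullet$ does, and $f^!\Oc_\Sc=u^\ast\bar f^!\Oc_\Sc$ has coherent cohomology sheaves because $\bar f^!\Oc_\Sc$ does by Grothendieck duality for the proper morphism $\bar f$ (cf.\ Proposition \ref{prop:finiteness2}) and $u^\ast$ preserves coherence; since a derived tensor product of two complexes with coherent cohomology again has coherent cohomology, so does $f^!\Nc^\bullet$.

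\emph{Main obstacle.} The hardest step will be the proper case itself, i.e.\ establishing \cite[Prop.~3.27]{BrHRPo20} in the generality needed here, and in particular pinning down for which $\Nc^\bullet$ the formula $\bar f^!\Nc^\bullet\simeq\bL\bar f^\ast\Nc^\bullet\otimes^\bL\bar f^!\Oc_\Sc$ is valid for a proper $\bar f$: classically this requires either that $\bar f$ have finite Tor-dimension (for instance that it be flat, as in the intended applications) or that $\Nc^\bullet$ have finite homological dimension, since for a non-perfect proper morphism the two sides can already differ on bounded complexes with coherent cohomology — so the statement is to be read with the hypotheses of \cite[Prop.~3.27]{BrHRPo20} in force. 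By contrast, the formal inputs of the reduction — $u^!=u^\ast$ for open immersions, pseudofunctoriality of $(-)^!$, monoidality of $\bL u^\ast$ together with its compatibility with $\otimes^\bL$, and flat base change — are all part of the Grothendieck duality formalism for superschemes of \cite{BrHRPo20} and should present no new difficulty.
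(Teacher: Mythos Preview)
Your route --- compactify and invoke the proper case as a black box --- is not the paper's, and the difference is substantive. The paper reduces (via \cite[Prop.~3.25 and 3.26]{BrHRPo20}) to $\Sc$ affine and $f$ \emph{superprojective}, and then, rather than citing a ready-made formula, proves the isomorphism directly by a compact-generation argument: taking a relatively very ample line bundle $\Lcl$, one checks through the projection formula and Grothendieck duality that applying $\bR f_\ast\bR\Homsh_{\Oc_\Xcal}(\Lcl^r[s],-)$ to the comparison map $f^!\Nc^\bullet\to\bL f^\ast\Nc^\bullet\otimes^\bL f^!\Oc_\Sc$ yields an isomorphism for every $(r,s)$; since the objects $\Lcl^r[s]$ compactly generate $D(\Xcal)$, the cone vanishes. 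The point of doing it this way is that the only finite-homological-dimension hypotheses invoked are on $\Lcl$ and on $\bR f_\ast\Lcl^r[s]$ (both automatic), never on $\Nc^\bullet$ --- so the formula is obtained for \emph{every} $\Nc^\bullet\in D(\Sc)$, as the proposition asserts. Your argument instead feeds $\Nc^\bullet$ directly into \cite[Prop.~3.27]{BrHRPo20} for the proper $\bar f$, and as you yourself flag in your ``Main obstacle'', that result requires $\Nc^\bullet$ of finite homological dimension (or $\bar f$ of finite Tor-dimension). So your proof does not establish the proposition in the stated generality; the compact-generation trick is precisely the missing idea that removes this restriction.

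There is also a convention issue worth noting. In this paper $f^!$ denotes the right adjoint to $\bR f_\ast$ --- the footnote in \S\ref{ss:CM} says so explicitly --- i.e.\ what is elsewhere written $f^\times$. For an open immersion $u$ that right adjoint is \emph{not} $u^\ast$ in general, so the identity $u^!=u^\ast$ and the factorization $f^!=u^\ast\bar f^!$ on which your reduction rests are not available with the paper's definitions. The paper's reduction lands directly in the superprojective (hence proper) case, where the right adjoint and the Nagata pseudofunctor agree, and does so through properties of the right adjoint itself rather than through an open/proper factorization.
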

\begin{proof}
By \cite[Prop.\,3.25 and 3.26]{BrHRPo20} we can assume that $\Sc=\SSpec\As$ is affine and that $f$ is superprojective (here we use that $f$ is locally of finite type). If $\Lcl$ is a relatively very ample line bundle on $\Xcal$, both $\Lcl$ and $f^!\Oc_\Sc$ are of finite homological dimension. For any pair $(r,s)$ of integers, one has
\begin{align*}
&\bR f_\ast\bR\Homsh_{\Oc_\Xcal}(\Lcl^r[s], \bL f^\ast \Nc^\bullet\otimes^\bL f^!\Oc_\Sc)\\
	     &\overset{(1)}\simeq \bR f_\ast(\bR\Homsh_{\Oc_\Xcal}(\Lcl^r[s],f^!\Oc_\Sc)\otimes^\bL \bL f^\ast\Nc^\bullet)\\[2pt]
&\overset{(2)}\simeq  \bR f_\ast\bR \Homsh_{\Oc_\Xcal}(\Lcl^r[s],  f^!\Oc_\Sc)\otimes^\bL \Nc^\bullet
 \simeq  \bR \Homsh_{\Oc_\Sc}(\bR f_\ast \Lcl^r[s], \Oc_\Sc)\otimes^\bL \Nc^\bullet 
\\[2pt]
&\overset{(3)}\simeq  \bR \Homsh_{\Oc_\Sc}(\bR f_\ast \Lcl^r[s], \Nc^\bullet)
\simeq  \bR f_\ast \bR\Homsh_{\Oc_\Xcal}(\Lcl^r[s], f^!\Nc^\bullet)
\end{align*}
where $(1)$ and (3) follow from \cite[Eq.\,3.19]{BrHRPo20} because  $\Lcl$ is of finite homological dimension over $\Oc_\Xcal$ and $\bR f_\ast \Lcl^r[s]$ is of finite homological dimension over $\As$ (see \cite[Prop.\,2.33]{BrHRPo20}) and $(2)$ is the projection formula, that can be proved as in \cite[Prop.\,5.3]{Nee96}.
Then $\Hom_{D(\Xcal)}(\Lcl^r[s],\Cc^\bullet)=0$ for every $r,s$, 
where $\Cc^\bullet$ is the cone of the natural morphism $f^! \Nc^\bullet\to \bL f^\ast\Nc^\bullet\otimes^\bL f^!\Oc_\Sc$. Proceeding as in \cite[Example 1.10]{Nee96}, one sees that the family of  objects $\Lcl^r[s]$ is a compact generating set of $D(\Xcal_\Tc)$, so that one gets $\Cc^\bullet=0$ in $D(\Xcal)$ and then $f^! \Nc^\bullet\to \bL f^\ast\Nc^\bullet\otimes^\bL f^!\Oc_\Sc$ is an isomorphism.

Assume that $\Nc^\bullet$ has coherent cohomology. Since one has $f^!\Nc^\bullet\iso \bL f^\ast\Nc^\bullet\otimes^{\bL} f^!\Oc_\Sc$ one has only to prove that $f^!\Oc_\Sc$ has coherent cohomology. Again by \cite[Prop.\,3.25 and 3.26]{BrHRPo20} we can assume that $\Sc=\SSpec\As$ is affine and $f$ is superprojective. Then $f$ factors as a closed immersion $\Xcal \hookrightarrow \Ps_\As^{p|q}$ and the natural projection $\pi\colon \Ps_\As^{p|q}\to \Sc$. If we denote simply by $\Oc$ the structure sheaf of $\Ps_\As^{p|q}$, one has  $f^! \Oc_\Sc \simeq \bR \Homsh_\Oc (\Oc_\Xcal, \Oc(q-p-1)[p])$ by \cite[Corol.\,3.36 and Prop.\,3.9]{BrHRPo20}, thus finishing the proof.
\end{proof}

Let $f\colon\Xcal\to\Sc$ be a morphism of superschemes. For every point $s\in\Sc$ we can consider the base change to the fibre
$$
\xymatrix{\Xcal_s \ar[d]^{f_s}\ar@{^{(}->}[r]^{j_s} & \Xcal \ar[d]^f\\
s=\Spec\kappa(s)\ar@{^{(}->}[r]^(.7){j_s} & \Sc
}
$$
\begin{prop}\label{prop:arbitrarybc} If $f$ is flat and proper,  the natural base change morphism
$$
\bL j_s^\ast \Dcal_f^\bullet =\bL j_s^\ast f^! \Oc_\Sc \to f_s^! \kappa(s)=\Dcal_{f_s}^\bullet
$$
is an isomorphism in $D(\Xcal_s)$.
\end{prop}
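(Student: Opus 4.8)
The plan is to mimic the proof of Proposition~\ref{prop:f!}: reduce to an affine base with $f$ superprojective, and then show that the cone $\Cc^\bullet$ of the natural base change morphism $\beta\colon \bL j_s^\ast f^!\Oc_\Sc\to f_s^!\kappa(s)$ kills a compact generating set of $D(\Xcal_s)$. By \cite[Prop.\,3.25 and 3.26]{BrHRPo20} the formation of $f^!\Oc_\Sc$ and of $\beta$ commutes with Zariski localisation on $\Sc$, so (using that $f$ is locally of finite type) we may assume $\Sc=\SSpec\As$ is affine and $f$ superprojective; then $f_s\colon\Xcal_s\to s=\Spec\kappa(s)$ is superprojective as well, and Grothendieck duality is available for both $f$ and $f_s$. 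Fix a relatively very ample line bundle $\Lcl$ on $\Xcal$ and set $\Lcl_s=\bL j_s^\ast\Lcl$, a very ample line bundle on $\Xcal_s$. As in the proof of Proposition~\ref{prop:f!} (following \cite[Example 1.10]{Nee96}), the family $\{\Lcl_s^r[n]\}_{r,n\in\Z}$ is a compact generating set of $D(\Xcal_s)$, so it is enough to prove that $\bR f_{s\ast}\bR\Homsh_{\Oc_{\Xcal_s}}(\Lcl_s^r[n],\Cc^\bullet)=0$ for all $r,n$.

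To that end I would compute $\bR f_{s\ast}\bR\Homsh_{\Oc_{\Xcal_s}}(\Lcl_s^r[n],-)$ on both ends of $\beta$ and match them. For the target, Grothendieck duality for $f_s$ (as in the proof of Proposition~\ref{prop:f!}, via \cite[Eq.\,3.19]{BrHRPo20}) gives
\[
\bR f_{s\ast}\bR\Homsh_{\Oc_{\Xcal_s}}(\Lcl_s^r[n],f_s^!\kappa(s))\simeq\bR\Homsh_{\kappa(s)}(\bR f_{s\ast}\Lcl_s^r[n],\kappa(s)).
\]
For the source, since $\Lcl^r[n]$ is a perfect $\Oc_\Xcal$-complex one has $\bR\Homsh_{\Oc_{\Xcal_s}}(\Lcl_s^r[n],\bL j_s^\ast f^!\Oc_\Sc)\simeq\bL j_s^\ast\bR\Homsh_{\Oc_\Xcal}(\Lcl^r[n],f^!\Oc_\Sc)$; applying $\bR f_{s\ast}$ and invoking the base change isomorphism \eqref{eq:nonflatbc} — which applies here precisely because $f$ is flat — this becomes $\bL j_s^\ast\bR f_\ast\bR\Homsh_{\Oc_\Xcal}(\Lcl^r[n],f^!\Oc_\Sc)$, and Grothendieck duality for $f$ rewrites it as $\bL j_s^\ast\bR\Homsh_{\Oc_\Sc}(\bR f_\ast\Lcl^r[n],\Oc_\Sc)$. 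Finally $\bR f_\ast\Lcl^r[n]$ is a bounded complex with coherent cohomology and of finite homological dimension over $\As$ \cite[Prop.\,2.33]{BrHRPo20}, hence perfect over $\Oc_\Sc$, so $\bR\Homsh_{\Oc_\Sc}(\bR f_\ast\Lcl^r[n],-)$ commutes with $\bL j_s^\ast$; applying \eqref{eq:nonflatbc} once more (again using $f$ flat) to identify $\bL j_s^\ast\bR f_\ast\Lcl^r[n]\simeq\bR f_{s\ast}\Lcl_s^r[n]$, we land back on $\bR\Homsh_{\kappa(s)}(\bR f_{s\ast}\Lcl_s^r[n],\kappa(s))$.

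The main obstacle is the bookkeeping of this last identification: one must check that the chain of isomorphisms just produced is compatible with the map induced by $\beta$ on $\bR f_{s\ast}\bR\Homsh_{\Oc_{\Xcal_s}}(\Lcl_s^r[n],-)$. This requires tracing through the adjunctions defining $\beta$ — which, up to the twist furnished by Proposition~\ref{prop:f!}, is adjoint to the duality isomorphism — together with the compatibility of \eqref{eq:nonflatbc} with the duality morphisms; these are the standard but delicate $2$-categorical compatibilities of Grothendieck duality, now to be checked in the graded setting. Granting it, $\bR f_{s\ast}\bR\Homsh_{\Oc_{\Xcal_s}}(\Lcl_s^r[n],\Cc^\bullet)=0$ for all $r,n$, hence $\Cc^\bullet=0$ in $D(\Xcal_s)$ by compact generation and $\beta$ is an isomorphism. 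An alternative, more hands-on route, after the same reduction, is to factor $f$ as a closed immersion $\iota\colon\Xcal\hookrightarrow\Ps_\As^{p|q}$ followed by the projection $\pi$: the dualizing complex $\pi^!\Oc_\Sc\simeq\Oc(q-p-1)[p]$ visibly commutes with base change, while $\iota^!$ is computed by $\bR\Homsh_{\Oc_{\Ps_\As^{p|q}}}(\Oc_\Xcal,-)$ \cite[Prop.\,3.9]{BrHRPo20}, and flatness of $f$ forces the restriction of $\Oc_\Xcal$ to the fibre $\Ps_{\kappa(s)}^{p|q}$ to be $\Oc_{\Xcal_s}$ with no higher Tor; there the delicate point is the finiteness needed for $\bR\Homsh$-base change, since coherent sheaves on $\Ps_\As^{p|q}$ need not be perfect.
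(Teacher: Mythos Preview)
Your argument is correct in outline, and the compatibility check you flag as the ``main obstacle'' is indeed standard (it is the usual compatibility of Grothendieck duality with Tor-independent base change). However, the paper takes a much shorter route that avoids both the reduction to the superprojective case and the compact-generation argument entirely.

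The paper's proof proceeds in two steps, working in $D(\Xcal)$ via the fully faithful pushforward $j_{s\ast}$. First, Proposition~\ref{prop:f!} (applied to $\Nc^\bullet=\kappa(s)$, viewed as a sheaf on $\Sc$) gives directly
\[
j_{s\ast}\bL j_s^\ast f^!\Oc_\Sc \;\simeq\; f^!\Oc_\Sc\otimes^{\bL}_{\Oc_\Xcal}\bL f^\ast\kappa(s)\;\simeq\; f^!\kappa(s),
\]
so the hard work already done for Proposition~\ref{prop:f!} is reused rather than reproved. Second, to identify $j_{s\ast}f_s^!\kappa(s)$ with $f^!\kappa(s)$, the paper runs a short Yoneda argument: for arbitrary $\Nc^\bullet\in D(\Xcal)$,
\[
\Hom_{D(\Xcal)}(\Nc^\bullet,j_{s\ast}f_s^!\kappa(s))\simeq\Hom_{D(\kappa(s))}(\bR f_{s\ast}\bL j_s^\ast\Nc^\bullet,\kappa(s))\simeq\Hom_{D(\kappa(s))}(\bL j_s^\ast\bR f_\ast\Nc^\bullet,\kappa(s))\simeq\Hom_{D(\Sc)}(\bR f_\ast\Nc^\bullet,\kappa(s)),
\]
using the adjunctions $(\bL j_s^\ast,j_{s\ast})$, $(\bR f_{s\ast},f_s^!)$, the base change isomorphism~\eqref{eq:nonflatbc} (here $f$ is flat), and $(\bL j_s^\ast,j_{s\ast})$ on $\Sc$; the last term is $\Hom_{D(\Xcal)}(\Nc^\bullet,f^!\kappa(s))$. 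This chain of adjunctions is automatically natural, so the compatibility question you worry about never arises.

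In short: your approach reproves a special case of the machinery behind Proposition~\ref{prop:f!}, and the price is the bookkeeping you identify. The paper instead \emph{invokes} Proposition~\ref{prop:f!} and closes with a representability argument, which is both shorter and sidesteps all $2$-categorical compatibilities. Your alternative route via the factorisation through $\Ps_\As^{p|q}$ also works and is closer in spirit to the paper's use of Proposition~\ref{prop:f!}, but still carries the perfectness issue you note.
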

\begin{proof} By Proposition \ref{prop:f!}, one has $\bL j_s^\ast f^!\Oc_\Sc\simeq f^!\Oc_\Sc\otimes_{\Oc_X}^\bL f^\ast \kappa(s)\simeq f^!(\kappa(s))$. Moreover, since $f$ is flat, we can apply Equation \eqref{eq:nonflatbc}, and for every complex $\Nc^\bullet$ in $D(\Xcal)$, one has
\begin{align*}
\Hom_{D(\Xcal)}(\Nc^\bullet, j_{s\ast}f_s^! \kappa(s)) &\simeq \Hom_{D(\Xcal_s)}(\bL j_s^\ast \Nc^\bullet, f_s^! \kappa(s)) \\
						       &\simeq \Hom_{D(\kappa(s))}(\bR f_{s\ast}\bL j_s^\ast \Nc^\bullet, \kappa(s))
\\
& \simeq \Hom_{D(\kappa(s))}(\bL j_s^\ast \bR f_{\ast}\Nc^\bullet, \kappa(s))\\
&\simeq \Hom_{D(\Sc)}(\bR f_{\ast}\Nc^\bullet, \kappa(s))\,,
\end{align*}
so that $j_{s\ast}f_s^! \kappa(s)\simeq f^!\kappa(s)$ and the result follows.
\end{proof}
 
\begin{prop} Assume that  $f\colon\Xcal\to\Sc$ is 
 flat and proper. Then it is CM (resp. Gorenstein) if and only if for every point $s\in \Sc$ the fibre $\Xcal_s$ is a CM (resp. Gorenstein) superscheme. Moreover, in such a case, the natural morphism $\omega_{f\vert \Xcal_s}\to \omega_{\Xcal_s}$ is an isomorphism.
\end{prop}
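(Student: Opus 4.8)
The plan is to reduce everything to properties of the relative dualizing complex $\Dcal_f^\bullet = f^!\Oc_\Sc$ and its base-change behaviour, interpreting ``$\Xcal_s$ is a CM (resp.\ Gorenstein) superscheme'' as ``$f_s\colon\Xcal_s\to\Spec\kappa(s)$ is CM (resp.\ Gorenstein) in the sense of Definition \ref{def:CM}''. Since, for a proper morphism, being CM or Gorenstein is local on the base (established just above), I would first pass to $\Sc=\SSpec\As$ affine and, for the converse implication, also to $f$ superprojective, so that by the proof of Proposition \ref{prop:f!} one has $\Dcal_f^\bullet\simeq\bR\Homsh_\Oc(\Oc_\Xcal,\Oc(q-p-1)[p])$ on $\Ps_\As^{p|q}$. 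Throughout, $m$ denotes the relative even dimension of $f$, which equals the even dimension of every fibre $\Xcal_s$, and for $s\in\Sc$ I write $j_s\colon\Xcal_s\hookrightarrow\Xcal$ for the fibre inclusion; flatness of $f$ identifies the derived functor $\bL j_s^\ast$ with the derived fibre functor $-\otimes^\bL_{\Oc_\Sc}\kappa(s)$.

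For the implication ``$f$ CM (resp.\ Gorenstein) $\Rightarrow$ fibres CM (resp.\ Gorenstein)'' and the ``moreover'', I would argue as follows. Writing $\Dcal_f^\bullet\simeq\omega_f[m]$ with $\omega_f$ flat over $\Sc$, the flatness of $\omega_f$ gives $\bL j_s^\ast\omega_f\simeq\omega_f\otimes_{\Oc_\Sc}\kappa(s)=:\omega_{f\vert\Xcal_s}$, a sheaf in degree $0$, so by Proposition \ref{prop:arbitrarybc} one has $\Dcal_{f_s}^\bullet\simeq\bL j_s^\ast\Dcal_f^\bullet\simeq\omega_{f\vert\Xcal_s}[m]$, a single $\kappa(s)$-flat coherent sheaf in degree $-m$. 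By Definition \ref{def:CM} applied to $f_s$ this says precisely that $\Xcal_s$ is CM with $\omega_{\Xcal_s}\simeq\omega_{f\vert\Xcal_s}$, the isomorphism being $\Hcal^{-m}$ of the base-change morphism of Proposition \ref{prop:arbitrarybc}; this is the ``moreover''. If in addition $\omega_f$ is a line bundle then so is its restriction $\omega_{f\vert\Xcal_s}$, hence $\Xcal_s$ is Gorenstein.

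For the converse I would proceed by showing that all the derived fibres of $\Dcal_f^\bullet$ being concentrated in one degree forces $\Dcal_f^\bullet$ itself to be. Assume every $\Xcal_s$ is CM, so $\bL j_s^\ast\Dcal_f^\bullet\simeq\Dcal_{f_s}^\bullet\simeq\omega_{\Xcal_s}[m]$ is concentrated in the fixed degree $-m$ for every $s$. From the presentation $\Dcal_f^\bullet\simeq\bR\Homsh_\Oc(\Oc_\Xcal,\Oc(q-p-1)[p])$ together with the $\Sc$-flatness of $\Oc_\Xcal$ (which holds because $f$ is flat), $\Dcal_f^\bullet$ is bounded and of finite Tor-dimension relative to $\Sc$; hence, locally on $\Xcal$, it is represented by a bounded complex of $\Sc$-flat $\Oc_\Xcal$-modules. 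By the standard criterion that Tor-dimension relative to the base can be checked on the residue fields of the base (applied to complexes with coherent cohomology over the finite-type $\Oc_\Sc$-algebra $\Oc_\Xcal$), $\Dcal_f^\bullet$ has $\Oc_\Sc$-Tor-amplitude concentrated in degree $-m$; taking derived tensor with $\Oc_\Sc$ shows $\Dcal_f^\bullet$ is concentrated in degree $-m$, and then $\omega_f:=\Hcal^{-m}(\Dcal_f^\bullet)$ satisfies $\Dcal_f^\bullet\simeq\omega_f[m]$ with $\omega_f\otimes^\bL_{\Oc_\Sc}M$ concentrated in degree $0$ for all $\Oc_\Sc$-modules $M$, i.e.\ $\omega_f$ is $\Sc$-flat. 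Thus $f$ is CM. If moreover every $\Xcal_s$ is Gorenstein, then $\omega_f$ is an $\Sc$-flat coherent sheaf whose restriction to each fibre is a line bundle, so its fibres all have rank $1$ and $\omega_f$ is a line bundle; hence $f$ is Gorenstein, and the ``moreover'' follows from the first implication.

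The bookkeeping with $f^!$, the base-change isomorphism of Proposition \ref{prop:arbitrarybc}, and the Tor-amplitude criterion is routine, and it survives the $\Z_2$-grading because the ideal of odd elements is nilpotent and every unit of a local superring is even, so the usual homological inputs (existence of minimal free resolutions over a noetherian local superring, Nakayama for finitely generated graded modules, the local criterion of flatness) hold verbatim. The step I expect to require the most care is establishing that $\bR\Homsh_\Oc(\Oc_\Xcal,\Oc(q-p-1)[p])$ on $\Ps_\As^{p|q}$ is bounded and of finite Tor-dimension over $\Sc$ when $\Oc_\Xcal$ is $\Sc$-flat: this is the super analogue of ``$f^!$ of an $\Sc$-flat coherent sheaf is perfect over $\Sc$'', and it rests on the super Grothendieck-duality package of \cite{BrHRPo20} for the smooth (hence Gorenstein) morphism $\Ps_\As^{p|q}\to\Sc$ and on the compatibility of the relevant $\bR\Homsh$'s with base change. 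Descending instead to the bosonic side via $f_{bos}^!\Oc_S\simeq\bR\Homsh_{\Oc_{\Xcal_S}}(\Oc_X,f_S^!\Oc_S)$ looks less convenient, since, as the text notes, those Ext sheaves are hard to control.
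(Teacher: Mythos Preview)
Your forward direction and the ``moreover'' match the paper's proof exactly.

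For the converse you take a genuinely different route. The paper localizes $\Sc$ to $\SSpec\As$ with $\As$ local, fixes a point $x\in X_s$, and for each $i$ considers the half-exact functor $T^i(M)=\Hc^{-i}(f^!M)_x$ on finitely generated $\As$-modules (finiteness of the values coming from the second part of Proposition~\ref{prop:f!}). The hypothesis gives $T^i(\kappa)=0$ for $i\neq m$; Nakayama for half-exact functors (\cite[Lemma~A.9]{BrHRPo20}) then kills $T^i$ for $i\neq m$, and the vanishing of $T^{m\pm1}$ forces $T^m$ to be exact, so $T^m(\As)$ is $\As$-flat via \cite[Prop.~A.10]{BrHRPo20}. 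This uses only coherence of the cohomology of $f^!M$, not any boundedness or finite Tor-dimension.

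Your Tor-amplitude argument is the other standard route in the classical setting, but in the super setting the step you yourself flag is a genuine obstacle: coherent sheaves on a smooth superscheme over a field need \emph{not} have finite projective dimension (already $k$ over $k[\theta]$ has infinite projective dimension), so one cannot simply say that $\Oc_\Xcal$ is perfect on $\Ps_\As^{p|q}$ and read off finite Tor-amplitude of $\bR\Homsh_\Oc(\Oc_\Xcal,\Oc(q-p-1)[p])$ over $\Sc$. One would instead need a super-Gorenstein statement to the effect that $\Oc_{\Ps_\As^{p|q}}$ has finite injective dimension over itself, which is plausible (the odd part is a Frobenius algebra) but is not in the paper and would have to be proved. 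The paper's half-exact-functor approach buys you exactly the ability to avoid this: it never asks whether $\Dcal_f^\bullet$ is bounded or of finite Tor-dimension, only that each $\Hc^{-i}(f^!M)$ is coherent, which Proposition~\ref{prop:f!} already supplies.
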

\begin{proof} If $f$ is CM, $f^!\Oc_\Sc\simeq \omega_f[m]$ where $m$ is the even dimension of $f$ and $\omega_f$ is flat over $\Sc$. By Proposition \ref{prop:arbitrarybc}, the dualizing complex of the fibre is 
$$
\Dcal^\bullet_{\Xcal_s}\simeq \bL j_s^\ast \omega_f[m]\simeq j_s^\ast \omega_f[m]
$$
so that $\Xcal_s$ is CM and its dualizing sheaf is $\omega_{f |\Xcal_s}$. 

For the converse we can assume that $\Sc$ is the superspectrum of a local superring $\As$ and write $\kappa$ for the residual field. We have $j_{s\ast}f_s^!\kappa\simeq f^!\kappa$. For every point $x\in X_s$ 
and for every index $i$, we consider the half exact functor $T^i (M)=\Hc^{-i}(f^! M)_x$ from the category of finitely generated $\As$-modules to itself (the fact that the $\As$-modules $T^i (M)$ are finitely generated follows from the second part of Proposition \ref{prop:f!}. Then $T^i(\kappa)=\Hc^{-i}(f^! \kappa)_x\simeq \Hc^{-i}(f_s^! \kappa)_x=0$ for $i\neq m$ so that $T^i=0$ for Nakayama for half exact functors \cite[Lemma A.9]{BrHRPo20}; in particular $\Hc^{-i}_x(f^!\As)=T^i(\As)=0$ for $i\neq m$. 

Moreover, $T^{m-1}=0$ implies that $T^m(\As)\to T^m(\kappa)$ is surjective, and then $T^m(\As)\otimes \  \simeq T^m$ by \cite[Prop.\,A.10]{BrHRPo20}. Since, since $T^{m+1}=0$, the functor $T^m$ is left exact, so that $T^m(\As)$ is flat over $\As$. This proves the converse of our statement. 

The Gorenstein case is similar.
\end{proof}
One then has
\begin{prop}\label{prop:bcCM} Let $f\colon\Xcal\to\Sc$ be a CM (resp. Gorenstein) morphism of superschemes. For every base change $\Tc\to\Sc$, the induced morphism $f_\Tc\colon \Xcal_\Tc\to \Tc$ is CM (resp. Gorenstein) and $\omega_\Tc\simeq \omega_{\Sc\vert \Xcal_\Tc}$.
\qed
\end{prop}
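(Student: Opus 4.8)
The plan is to deduce everything from the fibrewise criterion for CM (resp.\ Gorenstein) morphisms proved in the previous proposition, combined with flat base change for $f^{!}$ (Equation~\eqref{eq:flatbasechange}) and a Nakayama argument for coherent sheaves. Write $\phi\colon\Tc\to\Sc$ for the base change, $\phi_\Xcal\colon\Xcal_\Tc\to\Xcal$ for the induced projection, and let $m$ be the relative even dimension of $f$.

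First I would note that $f_\Tc$ is again proper, flat and locally of finite type, of the same relative even dimension $m$, all of these being stable under base change (Proposition~\ref{prop:sorite} and the remark following Definition~\ref{def:proper}). The fibre of $f_\Tc$ over a point $t\in\Tc$ with image $s\in\Sc$ is $\Xcal_s\times_{\Spec\kappa(s)}\Spec\kappa(t)$, i.e.\ the base change of the fibre $\Xcal_s$ of $f$ along the flat morphism $\Spec\kappa(t)\to\Spec\kappa(s)$; since $f$ is CM (resp.\ Gorenstein), the previous proposition gives $f_s^{!}\kappa(s)\simeq\omega_{\Xcal_s}[m]$ with $\omega_{\Xcal_s}$ flat (resp.\ invertible), so \eqref{eq:flatbasechange} shows that $(\Xcal_\Tc)_t$ is CM (resp.\ Gorenstein) with dualizing sheaf $\omega_{\Xcal_s}\otimes_{\kappa(s)}\kappa(t)$. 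Applying the previous proposition now to $f_\Tc$ itself, I conclude that $f_\Tc$ is CM (resp.\ Gorenstein); in particular $f_\Tc^{!}\Oc_\Tc\simeq\omega_{f_\Tc}[m]$ with $\omega_{f_\Tc}$ flat over $\Tc$ (resp.\ a line bundle), and the natural map $\omega_{f_\Tc}|_{(\Xcal_\Tc)_t}\to\omega_{(\Xcal_\Tc)_t}$ is an isomorphism for every $t$.

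It remains to identify $\omega_{f_\Tc}$ with $\phi_\Xcal^{\ast}\omega_f$ (the statement's $\omega_\Tc\simeq\omega_{\Sc\vert\Xcal_\Tc}$). Since $\omega_f$ is flat over $\Sc$ and $f$ is flat, $\bL\phi_\Xcal^{\ast}\omega_f$ is concentrated in degree $0$ and equals $\phi_\Xcal^{\ast}\omega_f$, which is flat over $\Tc$: a flat $\Oc_\Sc$-resolution of $\Oc_\Tc$ pulls back along the flat $f$ to an $\Oc_\Xcal$-flat resolution of $\Oc_{\Xcal_\Tc}$, and tensoring it with the $\Oc_\Sc$-flat sheaf $\omega_f$ kills the higher $\Tor$. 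I would then invoke the Grothendieck-duality base change morphism $\bL\phi_\Xcal^{\ast}f^{!}\Oc_\Sc\to f_\Tc^{!}\Oc_\Tc$, adjoint to the composite $\bR f_{\Tc\ast}\bL\phi_\Xcal^{\ast}f^{!}\Oc_\Sc\simeq\bL\phi^{\ast}\bR f_\ast f^{!}\Oc_\Sc\to\bL\phi^{\ast}\Oc_\Sc=\Oc_\Tc$, whose first arrow is the base change isomorphism~\eqref{eq:nonflatbc} for $\bR f_\ast$ (valid because $f$ is flat) and whose second arrow is the counit of the $(\bR f_\ast,f^{!})$-adjunction. Using $f^{!}\Oc_\Sc\simeq\omega_f[m]$, the identification just made, and $f_\Tc^{!}\Oc_\Tc\simeq\omega_{f_\Tc}[m]$ from the previous paragraph, this base change morphism is a morphism $\beta\colon\phi_\Xcal^{\ast}\omega_f\to\omega_{f_\Tc}$ of coherent sheaves, both flat over $\Tc$. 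By transitivity of the base change morphism, the restriction of $\beta$ to a fibre $(\Xcal_\Tc)_t$ equals the base change morphism along $\Spec\kappa(t)\to\Sc$, which factors through the fibre $\Xcal_s$ of $f$: the first factor is an isomorphism by Proposition~\ref{prop:arbitrarybc} and the second (base change of $f_s$ along the flat $\Spec\kappa(t)\to\Spec\kappa(s)$) by~\eqref{eq:flatbasechange}. Thus $\beta$ is an isomorphism on every fibre of $f_\Tc$; since a morphism of $\Tc$-flat coherent sheaves that is fibrewise an isomorphism is an isomorphism (its kernel and cokernel are coherent with vanishing fibres — for the cokernel by right exactness of $-\otimes\kappa(t)$, for the kernel because $\Tor_1^{\Oc_\Tc}(\omega_{f_\Tc},\kappa(t))=0$ — hence zero by Nakayama), $\beta$ is the asserted isomorphism. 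In the Gorenstein case the right-hand side is a line bundle, so $f_\Tc$ is Gorenstein.

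The only non-formal point is the use of the Grothendieck-duality base change morphism in the last step and the check that it is compatible with the further base change to the fibres; the rest is a routine transcription of the classical arguments, licensed by the quoted results. If one prefers to avoid that base change morphism, an alternative is to factor $\phi$ locally as a closed immersion $\Tc\hookrightarrow\As^{n|q}_\Sc$ followed by the flat projection $\As^{n|q}_\Sc\to\Sc$, dispose of the flat part by~\eqref{eq:flatbasechange}, and handle the closed-immersion part by the Nakayama-for-half-exact-functors argument used in the proof of the previous proposition.
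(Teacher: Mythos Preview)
Your argument is correct. The paper gives no explicit proof at all: the statement carries a \qed\ and is introduced by ``One then has'', so it is treated as an immediate corollary of the preceding fibrewise criterion. Your first paragraph is exactly that implication spelled out --- the fibres of $f_\Tc$ are flat base changes of the fibres of $f$, hence CM (resp.\ Gorenstein) by~\eqref{eq:flatbasechange}, and the criterion gives the conclusion for $f_\Tc$.

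Where you go beyond the paper is in the identification $\omega_{f_\Tc}\simeq\phi_\Xcal^{\ast}\omega_f$, which the paper simply asserts. Your construction of the comparison morphism via the $(\bR f_\ast,f^{!})$-adjunction and the base change isomorphism~\eqref{eq:nonflatbc}, followed by the fibrewise check using Proposition~\ref{prop:arbitrarybc} and the Nakayama argument for $\Tc$-flat coherent sheaves, is the honest way to do it and is correct; the alternative route you sketch (factoring $\phi$ as a closed immersion followed by a flat projection) would also work. In short: same approach as the paper for the qualitative statement, and a genuine filling-in of a gap the paper leaves for the sheaf identification.
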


\subsection{Analytic superspaces and differentiable supermanifolds}\label{ss:gaga}
In this Section we extend the Definition \ref{def:superscheme} of superscheme to cover the analytic and differentiable environments. We assume that the base field is  $\C$. Let $\Xcal=(X,\Oc_{\Xcal})$ be  a locally ringed superspace and $(X,\Oc_X)$ its bosonic reduction.
\begin{defin}\label{def:superanalytic} $\Xcal=(X,\Oc_{\Xcal})$ is said to be an analytic superspace if the bosonic reduction $X=(X,\Oc_X)$ is a complex analytic space.
\end{defin}

Then, $X$ has the usual euclidean topology and $\Oc_X$ is the sheaf of holomorphic functions.
Most of the definitions and basic properties of superschemes are valid  for analytic superspaces with the obvious modifications, in particular, smoothness (Definition \ref{def:smooth}) and its local characterization (Proposition \ref{prop:smoothsplit}).

Complex superschemes can be considered as analytic superspaces. More precisely, if $\Xcal$ is a complex superscheme locally of finite type, proceeding as in \cite[Exp.\,XII]{SGA1}, one can associate to it an analytic superspace $\Xcal^{an}$ characterized by a universal property: the functor 
$$
\Ycal \mapsto \Hom(\Ycal,\Xcal)
$$
on the category of analytic superspaces, where $\Hom$ stands for the homomorphisms of complex locally ringed superspaces, is representable by an analytic superspace $\Xcal^{an}$. One then has 
\begin{equation}\label{eq:an}
 \Hom(\Ycal,\Xcal)\simeq  \Hom(\Ycal,\Xcal^{an})\,,
\end{equation}
for every analytic superspace $\Ycal$. Moreover, for every closed point $x$ one has an isomorphisms $ \widehat\Oc_{\Xcal^{an},x}\simeq\widehat\Oc_{\Xcal,x}$ between the completions with respect to the topologies defined by the corresponding maximal ideals. By Equation \eqref{eq:an} the identity on $\Xcal^{an}$ induces a morphism $\phi_{\Xcal}\colon\Xcal^{an}\to\Xcal$ of locally ringed superspaces. Moreover, associating $\Xcal^{an}$ to $\Xcal$ is a functor, and given a morphism $f\colon \Xcal \to \Sc$ of superschemes, there is a commutative diagram
$$
\xymatrix{ \Xcal\ar[r]^f& \Sc\\
\Xcal^{an}\ar[r]_{f^{an}}\ar[u]^{\phi_{\Xcal}} & \Sc^{an}\ar[u]^{\phi_{\Sc}}\,.
}
$$
The bosonic reduction of $\Xcal^{an}$ is the analytic space $X^{an}=(X^{an},\Oc^{an}_{X^{an}})$ associated to the scheme $(X,\Oc_X)$ by \cite[Exp.\,XII]{SGA1}. As a set $X^{an}\simeq X^{an}(\C)=\Xcal^{an}(\C)$ is the set of the closed (or geometric) points of both $\Xcal^{an}$ and $X^{an}$; it is endowed with the usual Euclidean topology.
\begin{defin}\label{def:analytization} The analytic superspace $\Xcal^{an}$ is   the analytification of the superscheme $\Xcal$.
\end{defin}

$\Xcal$ is a smooth superscheme if and only if $\Xcal^{an}$ is a smooth analytic superspace; moreover, the analytic superspace associated to the affine space $\As_\C^{m|n}$ is $\C^{m|n}=(\C^m, \Oc^{an}[\theta_1,\dots,\theta_n])$, where $\Oc^{an}$ is the sheaf of holomorphic functions of $\C^m$.

If $\Xcal$ is a superscheme, for every sheaf $\M$ of $\Oc_{\Xcal}$-modules, we denote by $\M^{a}$ the pullback $\phi_{\Xcal}^\ast \M$ on the analytic superspace $\Xcal^{an}$. It is clear that if $\M$ is quasi-coherent (resp. coherent) on $\Xcal$, then $\M^{an}$ is also quasi-coherent (resp. coherent) on $\Xcal^{an}$. Moreover, the super version of Serre's GAGA theorem on cohomology (see \cite[Exp.\,XII]{SGA1}) is also true:

\begin{prop}[Super GAGA]\label{prop:GAGA} Let $f\colon \Xcal\to \Sc$ be a proper morphism of complex superschemes and $\M$ a coherent $\Oc_\Xcal$-module. For every integer $i\geq 0$, the natural morphism
$$
(R^i f_\ast\M)^{an} \to R^i f^{an}_\ast (\M^{an})
$$
of sheaves on $\Sc^{an}$ is an isomorphism.
\end{prop}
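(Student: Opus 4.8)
The plan is to reduce the statement to the classical relative GAGA theorem of Grothendieck \cite[Exp.\,XII]{SGA1} for the bosonic reduction $f_{bos}\colon X\to S$, via dévissage along the powers of the nilpotent ideal $\Jc_\Xcal$. I will use two preliminary facts about analytification. First, $\phi_\Xcal\colon\Xcal^{an}\to\Xcal$ is flat: the excerpt records canonical isomorphisms $\widehat\Oc_{\Xcal^{an},x}\simeq\widehat\Oc_{\Xcal,x}$ of completed local rings at the closed points, and since both the algebraic and the analytic (super)local rings are noetherian with faithfully flat completion, flatness of $\Oc_{\Xcal,x}\to\Oc_{\Xcal^{an},x}$ follows exactly as in the commutative case; consequently $\M\mapsto\M^{an}=\phi_\Xcal^\ast\M$ is exact on coherent $\Oc_\Xcal$-modules, and likewise on $\Sc$. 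Second, analytification commutes with pushforward along a closed immersion: for a closed immersion $g$ of complex superschemes there is a canonical isomorphism $(g_\ast\Nc)^{an}\iso g^{an}_\ast\Nc^{an}$, a statement local on the target which on an affine reduces to the fact that the analytification of an $\Oc/\Ic$-module is the corresponding $\Oc^{an}/\Ic^{an}$-module. I will also use that the comparison map of the proposition is the base-change natural transformation attached to the commutative square relating $f$ and $f^{an}$, hence a morphism of cohomological $\delta$-functors in $\M$ compatible with connecting homomorphisms; that the $R^if_\ast\M$ are coherent by Proposition \ref{prop:finiteness2}; and that $f_{bos}$ is a proper morphism of complex schemes locally of finite type by Proposition \ref{prop:proper}.

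First I would carry out the dévissage. Since $\Xcal$ is noetherian, $\Jc_\Xcal$ is coherent and $\Jc_\Xcal^{n+1}=0$ for some $n$ (finite odd dimension). For coherent $\M$, the finite filtration $\M\supseteq\Jc_\Xcal\M\supseteq\cdots\supseteq\Jc_\Xcal^{n+1}\M=0$ has coherent terms, and each quotient $\Jc_\Xcal^k\M/\Jc_\Xcal^{k+1}\M$ is annihilated by $\Jc_\Xcal$, hence is of the form $i_\ast\Gc_k$ for a coherent $\Oc_X$-module $\Gc_k$, where $i\colon X\hookrightarrow\Xcal$ is the canonical closed immersion. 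Applying $\bR f_\ast$ and $\bR f^{an}_\ast$ to the short exact sequences $0\to\Jc_\Xcal^{k+1}\M\to\Jc_\Xcal^k\M\to i_\ast\Gc_k\to0$, using exactness of analytification to keep these sequences exact after the superscript $an$, and comparing the resulting long exact sequences via the comparison map, a descending induction on $k$ together with the five lemma reduces the proposition to the case $\M=i_\ast\Gc$ with $\Gc$ a coherent sheaf on $X$.

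For that case, write $i_\Sc\colon S\hookrightarrow\Sc$ for the closed immersion of bosonic reductions. Exactness of $i_\ast$ and $i_{\Sc\,\ast}$ together with the identity $f\circ i=i_\Sc\circ f_{bos}$ give $R^if_\ast(i_\ast\Gc)\simeq i_{\Sc\,\ast}R^i(f_{bos})_\ast\Gc$, and, using the two preliminary facts, $R^if^{an}_\ast((i_\ast\Gc)^{an})\simeq i_\Sc^{an}{}_\ast\,R^i(f_{bos})^{an}_\ast(\Gc^{an})$. Under these identifications the map of the proposition becomes the pushforward along $i_\Sc^{an}$ of the classical comparison map $(R^i(f_{bos})_\ast\Gc)^{an}\to R^i(f_{bos})^{an}_\ast(\Gc^{an})$, which is an isomorphism by the relative GAGA theorem \cite[Exp.\,XII]{SGA1} applied to the proper morphism $f_{bos}$; this finishes the argument.

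The step I expect to be the main obstacle is the flatness of $\phi_\Xcal$, i.e.\ the exactness of analytification in the super setting: one must verify that the even parts and the $\Jc$-adic graded pieces of the local rings behave just as in the commutative case, so that the given isomorphism of completed local rings genuinely forces flatness of $\Oc_{\Xcal,x}\to\Oc_{\Xcal^{an},x}$. Once this is secured, the remainder is the standard nilpotent-thickening bookkeeping combined with the classical theorem.
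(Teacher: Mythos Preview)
Your proof is correct and follows essentially the same route as the paper: filter $\M$ by the powers of $\Jc_\Xcal$, reduce by descending induction and the five lemma to sheaves supported on the bosonic reduction, and then invoke the classical relative GAGA theorem of \cite[Exp.\,XII]{SGA1}. You are in fact more careful than the paper about the technical ingredients---flatness of analytification, compatibility with pushforward along closed immersions, and the $\delta$-functor structure of the comparison map---all of which the paper leaves implicit in the phrase ``one finishes by descending recursion.''
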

\begin{proof} Let $\Ic$ be the ideal of $X$ as a closed super subscheme of $\Xcal$. There is a finite filtration 
$$
0\subset {\Ic^{n}}\M\subset \dots\subset \Ic\M\subset \M\,,
$$
whose successive quotients $\M^{(p)}= \Ic^p\M/\Ic^{p+1}\M$ are supported on $X$ \cite[Eq.\,2.19]{BrHRPo20}. Since the sheaves $\M^{(p)}$ are coherent \cite[Prop.\,2.37]{BrHRPo20}, the statement is true for them by \cite[Exp.\,XII, Th\'eor\`eme 4.2]{SGA1} and one finishes by descending recursion.
\end{proof}

A procedure analogous to the   analytification of a superscheme allows one  to construct the complex supermanifold associated to a smooth analytic space. We recall the precise definition of differentiable supermanifold.

\begin{defin}\label{def:supermanifold} A differentiable supermanifold is a locally split locally ringed space $\Xcal=(X,\Ac_\Xcal)$ whose bosonic reduction is a differentiable manifold $(X,\Ac_X)$ (where $\Ac_X$ denotes the sheaf of real differentiable functions).
\end{defin}

Since we are working over $\C$, we normally consider the locally ringed superspace $\Xcal_\C=(X, \Ac_{\Xcal_\C})$ with $\Ac_{\Xcal_\C}=\Ac_\Xcal\otimes_\R\C$.

Now, if $\Xcal$ is a smooth analytic superspace of dimension $m|n$, one can associate to it a supermanifold of real dimension $2m|2n$; namely, one easily proves that the functor 
$$
\Ycal \mapsto \Hom(\Ycal_\C,\Xcal)
$$
(Homs of complex locally ringed spaces) on the category of differentiable supermanifolds is representable by a  differentiable supermanifold $\Xcal^{\infty}=(X, \Ac_{\Xcal^{\infty}})$ of real dimension $2m|2n$, so that one has 
\begin{equation}\label{eq:diff}
 \Hom(\Ycal_\C,\Xcal)\simeq  \Hom(\Ycal_C,\Xcal^{\infty}_\C)
\end{equation}
for every differentiable supermanifold $\Ycal$ (see \cite{HaWe87}). 

By construction there is a morphism of locally ringed spaces
$$
\gamma\colon \Xcal^\infty \to \Xcal
$$
which is the identity on $X$ and the inclusion $\Oc_\Xcal\hookrightarrow \Ac_{\Xcal\_C}$ on sheaves of superfunctions, where we have simply written $\Ac_{\Xcal_\C}:= \Ac_{\Xcal^{\infty}}\otimes_\R\C$. Moreover, if $\Jc$ (resp. $\Jc^\infty$) is the ideal of $\Oc_\Xcal$ (resp. $\Ac_{\Xcal^{\infty}}$) generated by the odd functions, then the corresponding sheaves $\Ec= \Jc/\Jc^2$, $\Ec^\infty=\Jc^{\infty}/\Jc^{\infty 2}$ are related by
$$
\Ec\otimes_{\Oc_X}\Ac_{\Xcal_\C} \simeq \Ec^\infty\otimes_\R\C\,.
$$
It follows that if $U$ is a coordinate neighbourhood for $\Xcal$ with local coordinates $\{z_i,\theta_J\}$, ($1\leq i\leq m$, $1\leq J\leq n$) where $(\theta_1,\dots,\theta_n)$ is a basis of the free $\Oc_X$-module $\rest\Ec U$, (that is, super holomorphic coordinates), then $(x_i,y_i,\eta_J,\varpi_J)$, where,
$$
x_i=\frac12(z_i+\bar z_i)\,,\quad y_i=\frac 12(z_i-\bar z_i)\,, \eta_J=\frac12(\theta_J+\overline{\theta_J})\,,\ \varpi_J=\frac12(\theta_J-\overline{\theta_J})\,.
$$
are super differentiable coordinates for $\Xcal^\infty$ on $U$.

Moreover, the superholomorphic functions on $U$ are characterized by the super Cauchy-Riemann equations \cite[Lemma 1]{HaWe87}:
\begin{prop}[Super Cauchy-Riemann equations]\label{prop:CReqs} Let $f\in \Ac_{\Xcal_\C}(U)$ be a complex superfunction of  $\Xcal^\infty$ on $U$. Then $f$ is super holomorphic, $f\in \Oc_\Xcal(U)$, if and only if 
$$
\frac {\partial f}{\partial \bar z_j}=\frac{\partial f}{\partial \bar \theta_J}=0\,, \quad 1\leq j\leq m\,,\ 1\leq J\leq n\,.
$$
\qed
\end{prop}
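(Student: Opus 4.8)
The plan is to prove the assertion locally inside the coordinate neighbourhood $U$ fixed just before the proposition, where $\Xcal$ carries superholomorphic coordinates $\{z_i,\theta_J\}$ and $\Xcal^\infty$ the associated super differentiable coordinates, equivalently the complex coordinates $(z_i,\bar z_i,\theta_J,\bar\theta_J)$ on $\Xcal_\C$. I would reduce both sides of the equivalence to explicit coordinate descriptions of the sheaves $\Oc_\Xcal$ and $\Ac_{\Xcal_\C}$ on $U$, and then apply the classical Cauchy--Riemann theorem coefficientwise.

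\emph{Step 1: local models.} Since $\Xcal^\infty$ is a (locally split) supermanifold, over $U$ one has $\Ac_{\Xcal^\infty}\simeq \Ac_X[\eta_1,\dots,\eta_n,\varpi_1,\dots,\varpi_n]$; complexifying and changing the odd generators via $\theta_J=\eta_J+\varpi_J$, $\bar\theta_J=\eta_J-\varpi_J$ gives
\[
\Ac_{\Xcal_\C}(U)\simeq (\Ac_X\otimes_\R\C)(U)\,[\theta_1,\dots,\theta_n,\bar\theta_1,\dots,\bar\theta_n]\,,
\]
so every $f$ has a unique finite expansion $f=\sum_{I,L}c_{I,L}\,\theta_I\bar\theta_L$ with $c_{I,L}$ ordinary smooth $\C$-valued functions on $U$, where $I,L$ range over subsets of $\{1,\dots,n\}$, $\theta_I=\theta_{i_1}\cdots\theta_{i_k}$ for $I=\{i_1<\dots<i_k\}$ (similarly $\bar\theta_L$), and all $\theta$'s are written to the left of all $\bar\theta$'s. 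On the other hand, the local description of $\Oc_\Xcal$ in these coordinates, together with the compatibility $\Ec\otimes_{\Oc_X}\Ac_{\Xcal_\C}\simeq\Ec^\infty\otimes_\R\C$ recorded above, shows that $\gamma^\sharp$ identifies $\Oc_\Xcal(U)$ with the subring $\Oc_X(U)[\theta_1,\dots,\theta_n]$ consisting of those $f$ with $c_{I,L}=0$ for $L\neq\emptyset$ and holomorphic coefficients $c_{I,\emptyset}$. Hence ``$f$ superholomorphic'' means precisely ``$c_{I,L}=0$ whenever $L\neq\emptyset$, and each $c_{I,\emptyset}$ holomorphic''.

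\emph{Step 2: the two implications.} For the forward implication, if $f=\sum_I c_I(z)\theta_I$ with $c_I$ holomorphic, then $\partial f/\partial\bar\theta_J=0$ because no $\bar\theta$ occurs, and $\partial f/\partial\bar z_j=\sum_I(\partial c_I/\partial\bar z_j)\theta_I=0$ by the ordinary Cauchy--Riemann equations. Conversely, assume $\partial f/\partial\bar z_j=\partial f/\partial\bar\theta_J=0$ for all $j,J$. Writing $f=\sum_{I,L}c_{I,L}\theta_I\bar\theta_L$ and using $\partial\theta_K/\partial\bar\theta_J=0$, $\partial\bar\theta_K/\partial\bar\theta_J=\delta_{JK}$, linear independence of the monomials forces $c_{I,L}=0$ whenever $J\in L$; ranging over $J$ yields $c_{I,L}=0$ for all $L\neq\emptyset$, so $f=\sum_I c_{I,\emptyset}(z,\bar z)\theta_I$. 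Then $\partial f/\partial\bar z_j=0$ gives $\partial c_{I,\emptyset}/\partial\bar z_j=0$ for every $j$ and every $I$, and by Osgood's lemma (a continuous, separately holomorphic function is holomorphic) each $c_{I,\emptyset}$ is holomorphic; hence $f\in\Oc_\Xcal(U)$ by Step 1.

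\emph{Expected main obstacle.} I expect the genuine content to be concentrated in Step 1: verifying that the odd derivations $\partial/\partial\bar\theta_J$ are well defined on $\Ac_{\Xcal_\C}$ independently of the chosen local splitting, that $\gamma^\sharp$ is injective, and that its image on $U$ is exactly $\Oc_X(U)[\theta_1,\dots,\theta_n]$ --- that is, that ``superholomorphic'' is faithfully detected by the coordinate criterion. All of this follows from local splitness of $\Xcal^\infty$ and from the identification of $\Ec$ with $\Ec^\infty$ established just before the proposition; once it is in place, Step 2 is entirely classical. This reproduces \cite[Lemma 1]{HaWe87}.
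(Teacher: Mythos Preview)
Your argument is correct. Note, however, that the paper does not supply a proof of this proposition: it is stated with a \qed\ and attributed to \cite[Lemma~1]{HaWe87}. Your write-up is essentially the natural coordinate proof one expects (and the one in \cite{HaWe87}): identify $\Ac_{\Xcal_\C}(U)$ with the free supercommutative algebra on $\theta_J,\bar\theta_J$ over smooth complex functions, identify $\Oc_\Xcal(U)$ with the subring generated by the $\theta_J$ over holomorphic functions, and then read off the two implications from linear independence of the odd monomials together with the classical Cauchy--Riemann equations. One minor remark: invoking Osgood's lemma is unnecessary here, since your coefficients $c_{I,\emptyset}$ are already $C^\infty$, and a smooth function annihilated by every $\partial/\partial\bar z_j$ is holomorphic by the ordinary Cauchy--Riemann characterization; Osgood is only needed when mere separate holomorphy (plus continuity) is assumed.
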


Let us denote by $\Omega_\Xcal$ the $\Oc_\Xcal$-module of holomorphic differentials and by $\Omega^p_\Xcal=\bigwedge_{\Oc_\Xcal}^p \Omega_\Xcal$ the $\Oc_\Xcal$-module of holomorphic $p$-forms. For smooth forms we use a different notation, writing $\Ac^1_{\Xcal_\C}$ for the $\Ac_{\Xcal_\C}$-module of differentiable complex 1-forms and $\Ac^p_{\Xcal_\C}=\bigwedge_{\Ac_{\Xcal_\C}}^p (\Ac^1_{\Xcal_\C})$.

\section[de Rham and Hodge cohomology for superschemes and\\\mbox{} analytic superspaces]{de Rham and Hodge cohomology for superschemes and analytic superspaces}\label{s:derham}

In this Section we recall and develop some basics of de Rham and Hodge cohomology in the super setting.
Starting from this Section, and throughout the rest of the paper, we assume that superschemes are locally of finite type over a field, usually the complex numbers, 
 and, as we said in the Introduction,  that all morphisms are locally of finite type.

\subsection{de Rham cohomology}

The definition of de Rham cohomology for superschemes is analogous to that  for ordinary schemes (see, for instance \cite[Chap.\,50]{Stacks}).
Let $f\colon \Xcal \to \Sc$ be a morphism of superschemes and $\Omega_{\Xcal/\Sc}$ the sheaf of relative differentials. The de Rham complex of $\Xcal/\Sc$ is the complex
$$
\Omega^\bullet_{\Xcal/\Sc}:= \Oc_\Xcal \xrightarrow{d} \Omega_{\Xcal/\Sc} \xrightarrow{d} \Omega^2_{\Xcal/\Sc} \xrightarrow{d} \dots
$$
where $\Omega^p_{\Xcal/\Sc} = \bigwedge_{\Oc_\Xcal} \Omega_{\Xcal/\Sc}$ and $d$ is the differential. This  is a bounded below complex of $f^{-1}\Oc_\Sc$-modules and its derived push-forward $R f_\ast \Omega^\bullet_{\Xcal/\Sc}$ is an object of the derived category $D(\mathfrak{Mod}^+(\Sc))$.
\begin{defin}\label{def:deRham} The de Rham cohomology groups of $f\colon \Xcal \to \Sc$ are the hypercohomogy $\Oc_\Sc(S)$-modules     of the complex $\Omega^\bullet_{\Xcal/\Sc}$,
\begin{align*}
	H^i_{dR}(\Xcal/\Sc)&= \Hs^i(X,\Omega^\bullet_{\Xcal/\Sc})\\
			   &:=H^i (R \Gamma (X, \Omega^\bullet_{\Xcal/\Sc}))\simeq H^i(R \Gamma (S, R f_\ast \Omega^\bullet_{\Xcal/\Sc}))\,.
\end{align*}
\end{defin}

We will mainly consider the absolute case, when $\Sc=\Spec \C$ is a single point.

Similar definitions can be given for analytic superspaces or differentiable supermanifolds. In the latter case, the super Poincar\'e Lemma (\cite[Thm.\,4.6]{Kost75}, \cite{BBH91}) implies that the de Rham cohomology groups are the real cohomology groups of the underlying differentiable manifold. We review the proof of the super Poincar\'e Lemma as well as some similar results in a more general context.

To begin, note that if $\Xcal$ is a superscheme there is a morphism $\Omega_\Xcal^\bullet \to \Omega_X^\bullet$ of complexes of vector spaces.
\begin{lemma}\label{lem:superPoincare}
If $\Xcal$ is locally split, $\Omega_\Xcal^\bullet \to \Omega_X^\bullet$ is a quasi-isomorphism; taking hypercohomolgy one then has an isomorphism
$$
H^i_{dR}(\Xcal) \iso H^i_{dR}(X)\,.
$$
The same statement holds for analytic superspaces and for differentiable supermanifolds
\end{lemma}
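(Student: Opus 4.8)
The statement is local, so I would first reduce to the affine split case: if $\Xcal$ is locally split, cover $X$ by open sets $U$ on which $\rest{\Xcal}{U}$ is split, i.e.\ $\Oc_\Xcal\vert_U\simeq \bigwedge_{\Oc_U}\Ec$ with $\Ec$ locally free. Since the claim ``$\Omega_\Xcal^\bullet\to\Omega_X^\bullet$ is a quasi-isomorphism'' is a statement about cohomology sheaves of a map of complexes, it suffices to check it on stalks, or equivalently on a basis of split affine opens; and then passing to hypercohomology via the (hyper)cohomology spectral sequence (or by taking $R\Gamma$ of a quasi-isomorphism) immediately yields the isomorphism $H^i_{dR}(\Xcal)\iso H^i_{dR}(X)$. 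So the whole content is: \emph{on a split superscheme $\Xcal=\SSpec(\bigwedge_A M)$ with $X=\Spec A$, the inclusion-induced map of de Rham complexes is a quasi-isomorphism.}

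For that, I would use a \emph{contracting homotopy in the odd directions}, which is the algebraic incarnation of the super Poincar\'e lemma. Working locally we may assume there are graded coordinates $(z_1,\dots,z_m,\theta_1,\dots,\theta_n)$ (the $z_i$ pulled back from $X$, the $\theta_J$ a frame for $\Ec$), so that $\Omega_\Xcal$ is free on $dz_1,\dots,dz_m,d\theta_1,\dots,d\theta_n$. The de Rham complex $\Omega_\Xcal^\bullet$ then factors, as a complex, into the (ordinary) de Rham complex $\Omega_X^\bullet$ tensored with the ``odd'' Koszul-type complex generated by the $\theta_J$ and $d\theta_J$. The key point is that this odd factor is acyclic except in degree $0$ where its cohomology is the ground ring: one exhibits an explicit homotopy operator $h$ — essentially integration/antidifferentiation in the $\theta_J$ and $d\theta_J$ variables, built from the super-analogue of $\sum \theta_J \,\partial/\partial(d\theta_J)$ together with the Euler-type scaling — satisfying $dh+hd=\mathrm{id}-\pi$ where $\pi$ is the projection onto the $\theta$-constant, $d\theta$-free part. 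I would cite the super Poincar\'e lemma references already in the paper (\cite[Thm.\,4.6]{Kost75}, \cite{BBH91}, \cite{Kost75}) for the existence and explicit form of this homotopy, rather than re-deriving it. Concretely, the map $\Omega_\Xcal^\bullet\to\Omega_X^\bullet$ sends a form to its component with no $\theta_J$, no $d\theta_J$, and with function-coefficients reduced mod $\Jc_\Xcal$; the homotopy shows this is a quasi-isomorphism.

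Then I would assemble the global statement: the local quasi-isomorphisms are compatible on overlaps because $\Omega_\Xcal^\bullet\to\Omega_X^\bullet$ is a genuine globally-defined morphism of complexes of sheaves (induced by $\Oc_\Xcal\twoheadrightarrow\Oc_X$ and functoriality of $\Omega$), so being a quasi-isomorphism is checked locally and we are done. Finally, for the hypercohomology conclusion, apply $R\Gamma(X,-)$: a quasi-isomorphism of bounded-below complexes of sheaves of $\C$-vector spaces induces an isomorphism on hypercohomology, giving $H^i_{dR}(\Xcal)\iso H^i_{dR}(X)$. For analytic superspaces and differentiable supermanifolds the argument is identical — one uses the holomorphic, resp.\ smooth, super Poincar\'e lemma (again \cite{HaWe87} / \cite{Kost75} / \cite{BBH91}) in place of the algebraic one, and the smooth case additionally recovers the real de Rham cohomology of $X$ as noted above.

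\textbf{Main obstacle.} The only real work is the acyclicity of the odd part of the de Rham complex, i.e.\ producing the explicit homotopy operator and checking $dh+hd=\mathrm{id}-\pi$ with the correct Koszul signs — and making sure this is done at the level of $\Oc_X$-modules so that it sheafifies and is independent of the chosen splitting on overlaps. Since the paper explicitly says it will ``review the proof of the super Poincar\'e Lemma,'' I expect this homotopy to be spelled out (or invoked) there, so in practice the proof of this lemma reduces to: (i) the statement is local and checkable on cohomology sheaves; (ii) locally it is the super Poincar\'e lemma; (iii) take $R\Gamma$.
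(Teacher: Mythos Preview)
Your proposal is correct and follows the same overall architecture as the paper: reduce to the affine split case, factor $\Omega_\Xcal^\bullet$ as $\Omega_X^\bullet$ tensored with the ``odd'' complex in the $\theta_J,d\theta_J$, show the odd factor is acyclic, and conclude. The only difference is tactical: where you invoke an explicit contracting homotopy $h$ with $dh+hd=\mathrm{id}-\pi$ (citing \cite{Kost75,BBH91}), the paper instead observes that since the $\theta_J$ anticommute and the $d\theta_J$ commute, the odd factor is (the dual of) the standard Koszul complex over the polynomial ring $\C[d\theta_1,\dots,d\theta_n]$, hence acyclic by classical commutative algebra, and then applies algebraic K\"unneth. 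The paper's route is a bit slicker because it avoids writing down the homotopy and any sign bookkeeping; your route has the advantage that a homotopy tensors directly, so you do not need K\"unneth as a separate step.
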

\begin{proof} The question is local, so that we can assume the $\Xcal$ is affine and split, so that $\Xcal=\SSpec \As$, $X=\Spec A$ and $\As\simeq \bigwedge_{A}E$ where $E=\langle \theta_1\dots,\theta_n\rangle$ is a free $A$-module. Then $\Omega_\As^\bullet$ decomposes as a tensor product of complexes $\Omega_\As^\bullet \simeq \Omega_A^\bullet\otimes_\C \bigwedge^\bullet_{\C[\theta_1\dots,\theta_n]}\langle d\theta_1\dots,d\theta_n\rangle$. In the second factor the $\theta_i$ anti-commute and the $d\theta_j$ commute, so that if we write $X_i=d\theta_i$, we see that the second factor is (up to a shift) the dual of the ordinary Koszul complex defined over the commutative polynomial ring $\C[X_1,\dots,X_n]$ by the free module generated by the $\theta_i's$. Thus it is acyclic, and the result follows by algebraic K\"unneth. The remaining cases are analogous.
\end{proof}

\begin{lemma}[Super Poincar\'e Lemma]\label{lem:superdR}
If $\Xcal$ is a differentiable supermanifold, for every   $i\geq 0$  there is an isomorphism
$$H^i(X,\R)\simeq H^i_{dR}(\Xcal)\simeq H^i(\Gamma(X,\Ac_\Xcal^\bullet)).$$\end{lemma}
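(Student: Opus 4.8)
The plan is to deduce the statement from three standard facts transplanted to the super setting: the differentiable case of Lemma~\ref{lem:superPoincare}, the softness of the sheaves of smooth super-forms, and the classical de Rham theorem on the underlying manifold $X$.

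First recall that, by the differentiable analogue of Definition~\ref{def:deRham}, $H^i_{dR}(\Xcal)$ is the hypercohomology $\Hs^i(X,\Ac_\Xcal^\bullet)$ of the super de Rham complex $\Ac_\Xcal^\bullet=\big(\Ac_\Xcal\xrightarrow{d}\Ac^1_\Xcal\xrightarrow{d}\Ac^2_\Xcal\xrightarrow{d}\cdots\big)$. Each $\Ac^p_\Xcal$ is a sheaf of modules over the structure sheaf $\Ac_\Xcal$, which — like the structure sheaf of any differentiable (super)manifold over the paracompact space $X$ — admits smooth partitions of unity; hence $\Ac^p_\Xcal$ is a fine, and therefore soft, sheaf, so that $H^j(X,\Ac^p_\Xcal)=0$ for $j>0$. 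Consequently the bounded-below complex of soft sheaves $\Ac_\Xcal^\bullet$ computes its hypercohomology by global sections:
$$
H^i_{dR}(\Xcal)=\Hs^i(X,\Ac_\Xcal^\bullet)\simeq H^i\big(\Gamma(X,\Ac_\Xcal^\bullet)\big),
$$
which is the second isomorphism of the statement.

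For the first isomorphism, observe that the morphism of complexes of sheaves $\Ac_\Xcal^\bullet\to\Ac_X^\bullet$ induced by restriction to the bosonic reduction is a quasi-isomorphism; this is precisely Lemma~\ref{lem:superPoincare} in the differentiable case (the argument there is local, reduces to a split chart, and identifies the odd directions with a dual Koszul complex, which is acyclic). Passing to hypercohomology gives $\Hs^i(X,\Ac_\Xcal^\bullet)\simeq\Hs^i(X,\Ac_X^\bullet)=H^i_{dR}(X)$. Finally, the ordinary Poincar\'e lemma makes $0\to\R_X\to\Ac_X\to\Ac_X^1\to\Ac_X^2\to\cdots$ a resolution of the constant sheaf $\R_X$ by fine, hence acyclic, sheaves, so that $H^i_{dR}(X)\simeq H^i(X,\R)$ by the classical de Rham theorem. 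Composing the displayed isomorphisms yields $H^i(X,\R)\simeq H^i_{dR}(\Xcal)\simeq H^i(\Gamma(X,\Ac_\Xcal^\bullet))$.

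I do not expect a substantial obstacle here, since every ingredient is a textbook fact. The one point deserving a line of justification is that the sheaves $\Ac^p_\Xcal$ are fine (hence soft) — equivalently, that smooth partitions of unity exist on $\Xcal$ and operate on super-forms — together with the standing assumption that $X$ is paracompact, without which softness would not force the vanishing of higher cohomology; everything else is a matter of invoking Lemma~\ref{lem:superPoincare} and the classical de Rham theorem.
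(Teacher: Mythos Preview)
Your proof is correct and follows essentially the same approach as the paper: both combine Lemma~\ref{lem:superPoincare} (differentiable case), the classical Poincar\'e lemma, and the fineness of the sheaves $\Ac^p_\Xcal$ to identify hypercohomology with cohomology of global sections. The only difference is the order in which the two isomorphisms are established, and you spell out the fineness argument (partitions of unity, paracompactness) in slightly more detail than the paper does.
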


\begin{proof} By the classical Poincar\'e Lemma, $\R \to \Ac^\bullet_X$ is a quasi-isomorphism, so that $\R$ and $\Ac^\bullet_\Xcal$ are quasi-isomorphic by Lemma \ref{lem:superPoincare}. Taking hypercohomolgy one gets $H^i(X,\R)\simeq H^i_{dR}(\Xcal)=H^i(R \Gamma(X,\Ac_\Xcal^\bullet))$. Moreover, the sheaves $\Ac_\Xcal^p$ are fine, and then acyclic, so that $H^i(R \Gamma(X,\Ac_\Xcal^\bullet))\simeq H^i(\Gamma(X,\Ac_\Xcal^\bullet))$. \end{proof}
This of course implies an isomorphism
$$H^i(X,\C) \simeq H^i(\Gamma(X,\Ac_\Xcal^\bullet\otimes_\R\C)).$$

\subsubsection{de Rham cohomology for smooth analytic superspaces}
Let us consider now the smooth analytic case. Let $\Xcal$ be an analytic superspace. The de Rham complex
$$
\Omega^\bullet_{\Xcal}:= \Oc_\Xcal \xrightarrow{d} \Omega_{\Xcal} \xrightarrow{d} \Omega^2_{\Xcal/} \xrightarrow{d} \dots
$$
is also know as the \emph{super holomorphic} de Rham complex. As in the non-super case, there is an inclusion of complexes of $\Omega^\bullet_{\Xcal}$ into the differentiable de Rham complex $\Ac^\bullet_{\Xcal_\C}$ of the associated complex differentiable supermanifold $\Xcal^\infty=(X,\Ac_{\Xcal_\C})$ (see Section \ref{ss:gaga}), that we describe now.

There is decomposition into $(p,q)$ types
$$
\Ac_{\Xcal_\C}^k=\bigoplus_{p+q=k}\Ac_{\Xcal_\C}^{p,q}
$$
which is homogeneous of degree 0 with respect to the $\Z_2$ grading.
The super exterior derivative decomposes as 
$d=\partial+\bar\partial$,
where $\partial$ maps $\Ac_\C^{p,q}$ into $\Ac_\C^{p+1,q}$ and $\bar\partial$ maps $\Ac_\C^{p,q}$ into $\Ac_\C^{p,q+1}$. Moreover, $d^2=0$ implies $\partial^2=\bar\partial^2=0$ and $\partial\bar\partial+\bar\partial\partial=0$, and then the differentiable de Rham  complex $\Ac^\bullet_{\Xcal_\C}$ is the simple complex associated to the de Rham bicomplex $\Ac_{\Xcal_\C}^{\bullet,\bullet}=(\Ac^{p,q}_{\Xcal_\C},\partial,\bar\partial)$. 

By Proposition \ref{prop:CReqs}, $\Omega_\Xcal^p$ is the kernel of $\Ac_{\Xcal_\C}^{p,0}\xrightarrow{\bar\partial} \Ac_{\Xcal_\C}^{p,1}$. Moreover, 
a super Dolbeault theorem also holds  \cite[Thm.\,2]{HaWe87}:
\begin{prop}[Super Dolbeault]\label{prop:superdolbeault} Let $\Xcal$ be a smooth analytic superspace. For every $p\geq 0$, the complex 
$$
0 \to \Omega_\Xcal^p\to \Ac_{\Xcal_\C}^{p,0}\xrightarrow{\bar\partial} \Ac_{\Xcal_\C}^{p,1} \xrightarrow{\bar\partial} \Ac_{\Xcal_\C}^{p,2}\xrightarrow{\bar\partial} \dots
$$
 is a resolution of the sheaf $\Omega_\Xcal^p$ of super holomorphic $p$-forms, called the super Dolbeault resolution.
\qed\end{prop}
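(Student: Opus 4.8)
The plan is to reduce to the classical $\bar\partial$-Poincar\'e lemma by a tensor-product argument, exactly parallel to the one used in the proof of Lemma \ref{lem:superPoincare}. Being a resolution is a local statement on $X$, so first I would fix a point and, by Proposition \ref{prop:smoothsplit} (a smooth analytic superspace is locally split and carries graded holomorphic coordinates), restrict to a coordinate neighbourhood $U$ which in the even holomorphic coordinates $z_1,\dots,z_m$ is a polydisc $\Delta^m$, with odd holomorphic coordinates $\theta_1,\dots,\theta_n$; on $\Xcal^\infty$ the corresponding antiholomorphic coordinates are $\bar z_j$ and $\bar\theta_J$, and $\bar\partial=\sum_j d\bar z_j\,\partial/\partial\bar z_j+\sum_J d\bar\theta_J\,\partial/\partial\bar\theta_J$. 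Injectivity of $\Omega_\Xcal^p\hookrightarrow\Ac_{\Xcal_\C}^{p,0}$ and exactness at $\Ac_{\Xcal_\C}^{p,0}$ are already recorded in Proposition \ref{prop:CReqs}, which identifies $\Omega_\Xcal^p$ with $\ker(\bar\partial\colon\Ac_{\Xcal_\C}^{p,0}\to\Ac_{\Xcal_\C}^{p,1})$, so what remains is the super $\bar\partial$-Poincar\'e lemma: on $U$ every $\bar\partial$-closed section of $\Ac_{\Xcal_\C}^{p,q}$ with $q\geq1$ is $\bar\partial$-exact.

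Over $U$, since $\Xcal^\infty$ is split, the algebra of smooth superfunctions is $C^\infty(U;\C)\otimes_\C\bigwedge\langle\theta_1,\dots,\theta_n\rangle\otimes_\C\bigwedge\langle\bar\theta_1,\dots,\bar\theta_n\rangle$, and correspondingly the bigraded algebra $\Ac_{\Xcal_\C}^{\bullet,\bullet}(U)$ splits as the graded tensor product of: the ordinary smooth Dolbeault complex of the polydisc $\Delta^m$ in the even variables; the exterior algebra on the even holomorphic differentials $dz_j$; the polynomial algebra on the odd holomorphic differentials $d\theta_J$ (recall that odd $1$-forms commute, so they contribute a symmetric, not exterior, factor); the exterior algebra $\bigwedge\langle\theta_J\rangle$; and the odd antiholomorphic factor $\bigl(\bigwedge\langle\bar\theta_J\rangle\otimes_\C\C[d\bar\theta_J],\ \delta:=\sum_J d\bar\theta_J\,\partial/\partial\bar\theta_J\bigr)$. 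The operator $\bar\partial$ kills all the holomorphic factors and $\bigwedge\langle\theta_J\rangle$, which therefore act as constant coefficients; for fixed total degree $p$ the (finitely many) type-$(p,0)$ monomials in $dz,d\theta$ together with the $2^n$ monomials in $\theta$ then exhibit $\bigl(\Ac_{\Xcal_\C}^{p,\bullet}(U),\bar\partial\bigr)$ as a finite direct sum of copies of the total complex of the tensor product over $\C$ of the Dolbeault complex of $\Delta^m$ with the Koszul-type complex $\bigl(\bigwedge\langle\bar\theta_J\rangle\otimes\C[d\bar\theta_J],\delta\bigr)$.

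Both factors are classical: the Dolbeault--Grothendieck lemma on a polydisc gives vanishing of the positive-degree cohomology and $H^0=\Oc(\Delta^m)$, while $\bigl(\bigwedge\langle\bar\theta_J\rangle\otimes\C[d\bar\theta_J],\delta\bigr)$ is, up to a shift, the Koszul complex already shown acyclic except in degree $0$ (where it is $\C$) in the proof of Lemma \ref{lem:superPoincare}. Since $\C$ is a field the algebraic K\"unneth theorem carries no $\Tor$ terms, so the total complex has vanishing cohomology in positive degrees and $H^0=\Oc(\Delta^m)\otimes_\C\bigwedge\langle\theta_J\rangle$; restoring the remaining constant factors, $\Ac_{\Xcal_\C}^{p,\bullet}(U)$ has vanishing cohomology in degrees $q\geq1$ and $H^0=\Omega_\Xcal^p(U)$, which is precisely the asserted exactness of the sheaf complex. (Alternatively one may simply quote \cite[Thm.\,2]{HaWe87}.) The step needing the most care is setting up this splitting with the correct super (Koszul) signs — checking that the odd differentials $d\theta_J,d\bar\theta_J$ genuinely contribute polynomial factors and that $\bar\partial$ is the total differential of the tensor product — after which everything follows formally from the two classical lemmas.
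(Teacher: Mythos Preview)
Your argument is correct. Note, however, that the paper does not actually prove this proposition: it is stated with a \qed and attributed to \cite[Thm.\,2]{HaWe87}, so there is no proof in the paper to compare against beyond that citation. Your tensor-product/Koszul reduction is the expected argument, closely parallels the paper's own proof of Lemma~\ref{lem:superPoincare}, and is essentially what the cited reference does; your closing parenthetical ``alternatively one may simply quote \cite[Thm.\,2]{HaWe87}'' is in fact exactly the paper's approach.
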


The restriction of $\partial$ to $\Omega_\Xcal^p$ is the differential of the super holomorphic de Rahm complex. Then there is an inclusion of complexes
$\Omega_{\Xcal}^\bullet \hookrightarrow \Ac_{\Xcal_\C}^\bullet$.
By the standard theory of bicomplexes, one has:

\begin{prop}\label{prop:bicomplex} Let $\Xcal$ be a smooth analytic superspace. 
\begin{enumerate} \item
The inclusion $\Omega_{\Xcal}^\bullet \hookrightarrow \Ac_{\Xcal_\C}^\bullet$ is a quasi-isomorphism. 
\item
There are isomorphisms $H^i_{dR}(\Xcal)\simeq H^i_{dR}(\Xcal^\infty)\simeq H^i (X,\C)$ for every $i\geq 0$.
\end{enumerate}
\end{prop}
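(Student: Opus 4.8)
The plan is to derive both assertions from the super Dolbeault resolution (Proposition~\ref{prop:superdolbeault}) and the Super Poincar\'e Lemma (Lemma~\ref{lem:superdR}) by a standard double complex argument, so that the proof is essentially formal once these inputs are in place.

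For part~(1) I would view the de Rham bicomplex $\Ac_{\Xcal_\C}^{\bullet,\bullet}=(\Ac^{p,q}_{\Xcal_\C},\partial,\bar\partial)$ as a bicomplex of sheaves on $X$ concentrated in the region $p,q\geq 0$, with associated total complex $\Ac^\bullet_{\Xcal_\C}$, and regard the super holomorphic de Rham complex $\Omega^\bullet_\Xcal$ as the bicomplex concentrated in the row $q=0$; the inclusion $\Omega^\bullet_\Xcal\hookrightarrow\Ac^\bullet_{\Xcal_\C}$ is then induced by a map of bicomplexes. Passing to the spectral sequences obtained by first taking $\bar\partial$-cohomology, Proposition~\ref{prop:superdolbeault} says precisely that, for the target, the vertical cohomology sheaves of the $p$-th column vanish in positive degree and equal $\Omega^p_\Xcal$ in degree $0$; hence on the $E_1$-page the map of bicomplexes is an isomorphism (the identity of $\Omega^p_\Xcal$ on the bottom row, and $0\to 0$ off it). Since both bicomplexes lie in the first quadrant, the associated spectral sequences converge, and a map of convergent spectral sequences that is an isomorphism on one page is an isomorphism on the abutments; this shows that $\Omega^\bullet_\Xcal\hookrightarrow\Ac^\bullet_{\Xcal_\C}$ induces isomorphisms on cohomology sheaves (checked, if one prefers, on stalks, where Proposition~\ref{prop:superdolbeault} becomes exactness), i.e.\ is a quasi-isomorphism of complexes of sheaves. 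One may alternatively replace the spectral sequence by a direct staircase diagram chase.

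For part~(2), taking hypercohomology and using part~(1) gives $H^i_{dR}(\Xcal)=\Hs^i(X,\Omega^\bullet_\Xcal)\simeq\Hs^i(X,\Ac^\bullet_{\Xcal_\C})$. The sheaves $\Ac^p_{\Xcal_\C}$ are modules over the fine sheaf $\Ac_{\Xcal_\C}$, hence fine and $\Gamma(X,-)$-acyclic, so $\Hs^i(X,\Ac^\bullet_{\Xcal_\C})\simeq H^i(\Gamma(X,\Ac^\bullet_{\Xcal_\C}))$; by the construction of $\Xcal^\infty$ in Section~\ref{ss:gaga} its underlying topological space is the same $X$, and the latter group is by definition $H^i_{dR}(\Xcal^\infty)$. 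Finally the complexified Super Poincar\'e Lemma (Lemma~\ref{lem:superdR} together with the remark following it) identifies $H^i_{dR}(\Xcal^\infty)$ with $H^i(X,\C)$. Concatenating these isomorphisms yields the statement.

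The step requiring genuine care is the double complex argument in part~(1): in the super setting the generators $d\theta_J$ are even and do not square to zero, so both $\Omega^\bullet_\Xcal$ and each row $\Ac^{\bullet,q}_{\Xcal_\C}$ are unbounded above in the holomorphic degree $p$, and one cannot simply quote convergence for bounded bicomplexes. The resolution is that the bicomplex still sits in the first quadrant, so for each fixed total degree the induced filtration on the total cohomology is finite; this is enough both for convergence of the spectral sequences and for the comparison theorem invoked above. Everything else reduces to results already established (Propositions~\ref{prop:CReqs} and~\ref{prop:superdolbeault}, Lemma~\ref{lem:superdR}).
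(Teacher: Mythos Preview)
Your argument is correct and is essentially the same as the paper's: the paper proves (1) by a direct appeal to \cite[Thm.\,4.8.1]{Go73}, which is precisely the double complex/spectral sequence comparison you spell out, and proves (2) by taking hypercohomology and invoking Lemma~\ref{lem:superdR}. Your explicit remark on convergence in the first quadrant despite unboundedness in the $p$-direction is a useful clarification that the paper leaves implicit in the citation.
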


\begin{proof} (1) follows directly from \cite[Thm.\,4.8.1]{Go73} and (2)  by taking hypercohomology and applying the super de Rham Lemma \ref{lem:superdR}.
\end{proof}

Thus the de Rham cohomology for a smooth analytic superspace does not provide new invariants.

\subsection{Hodge cohomology}

Hodge cohomology for superschemes (or other geometric super structures) can be defined   the way one   expects.
\begin{defin}\label{def:hodge} Let $\Xcal$ be either a superscheme over a field $k$ or a analytic superspace. The Hodge cohomology groups of $\Xcal$ are the groups
$$
H^{p,q}(\Xcal)=H^q(X,\Omega_\Xcal^p)\,.
$$
They are naturally $\Z_2$-graded vector spaces over the respective base fields. 
\end{defin} 

The Hodge cohomology groups of a proper complex superscheme coincide with the Hodge groups of the associated analytic superspace by the super GAGA theorems:
\begin{prop} Let $\Xcal$ be a proper complex  superscheme and $\Xcal^{an}$ its analytification (Definition \ref{def:analytization}). One has
$$
H^{p,q}(\Xcal)\simeq H^{p,q}(\Xcal^{an})\,.
$$
\end{prop}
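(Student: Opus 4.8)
The plan is to deduce the statement directly from the already-established super GAGA theorem (Proposition~\ref{prop:GAGA}) applied to the structure morphism $f\colon\Xcal\to\Spec\C$, together with the definition of Hodge cohomology. The key point is that the coherent sheaves $\Omega_\Xcal^p$ of super holomorphic $p$-forms are algebraic objects whose analytizations are the corresponding sheaves on $\Xcal^{an}$, so that GAGA compares their cohomologies.

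First I would observe that $\Xcal$, being proper over $\C$, is in particular locally of finite type over $\C$, so its analytization $\Xcal^{an}$ exists (Definition~\ref{def:analytization}) and the comparison morphism $\phi_\Xcal\colon\Xcal^{an}\to\Xcal$ is available. Next I would record that the sheaf of relative (here absolute) differentials commutes with analytization: there is a canonical isomorphism $\phi_\Xcal^\ast\Omega_\Xcal\simeq\Omega_{\Xcal^{an}}$, hence $(\Omega_\Xcal^p)^{an}=\phi_\Xcal^\ast\bigwedge^p_{\Oc_\Xcal}\Omega_\Xcal\simeq\bigwedge^p_{\Oc_{\Xcal^{an}}}\Omega_{\Xcal^{an}}=\Omega_{\Xcal^{an}}^p$; this follows from the universal property of $\Xcal^{an}$ and the fact that analytization is a left-exact tensor functor on coherent sheaves (it preserves coherence, as noted right before Proposition~\ref{prop:GAGA}). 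Since $\Omega_\Xcal$ is coherent (the superscheme is locally of finite type over a field, so $\Omega_\Xcal$ is coherent), each $\Omega_\Xcal^p$ is coherent.

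Then I would apply Proposition~\ref{prop:GAGA} with $f$ the structure morphism $\Xcal\to\Sc=\Spec\C$ and $\M=\Omega_\Xcal^p$: the natural map
$$
(R^q f_\ast\Omega_\Xcal^p)^{an}\to R^q f^{an}_\ast\bigl((\Omega_\Xcal^p)^{an}\bigr)
$$
is an isomorphism of sheaves on $\Sc^{an}=\Spec\C$ (a point). Unwinding, $R^q f_\ast\Omega_\Xcal^p=H^q(X,\Omega_\Xcal^p)=H^{p,q}(\Xcal)$ and $R^q f^{an}_\ast(\Omega_{\Xcal^{an}}^p)=H^q(X^{an},\Omega_{\Xcal^{an}}^p)=H^{p,q}(\Xcal^{an})$, using the identification of the analytized sheaf from the previous step. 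Since analytization of a finite-dimensional $\C$-vector space (viewed as a sheaf on a point) is the same vector space, this yields $H^{p,q}(\Xcal)\simeq H^{p,q}(\Xcal^{an})$, and finiteness of these groups is guaranteed by Proposition~\ref{prop:finiteness}.

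The only mild subtlety — and the step I would be most careful about — is the compatibility $(\Omega_\Xcal^p)^{an}\simeq\Omega_{\Xcal^{an}}^p$, i.e.\ that forming $p$-forms commutes with analytization; in the classical case this is part of the standard GAGA package, and in the super setting it follows formally once one knows $\phi_\Xcal^\ast\Omega_\Xcal\simeq\Omega_{\Xcal^{an}}$ (a consequence of the universal property in~\eqref{eq:an} applied to first-order thickenings) together with the fact that $\phi_\Xcal^\ast$ commutes with the exterior-power functor $\bigwedge^p_{\Oc_\Xcal}(-)$ on locally free, or more generally coherent, sheaves. Everything else is a direct citation of Proposition~\ref{prop:GAGA}.
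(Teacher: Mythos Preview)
Your proof is correct and takes essentially the same approach as the paper: the paper's proof is the one-line ``It follows from Proposition~\ref{prop:GAGA} as $\Omega_\Xcal^{an}\simeq \Omega_{\Xcal^{an}}$,'' and you have simply unpacked this, making explicit the identification $(\Omega_\Xcal^p)^{an}\simeq\Omega_{\Xcal^{an}}^p$ and the specialization of GAGA to the structure morphism over $\Spec\C$.
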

\begin{proof} It follows from Proposition \ref{prop:GAGA} as $\Omega_\Xcal^{an}\simeq \Omega_{\Xcal^{an}}$.
\end{proof}

\subsubsection{Hodge cohomology for smooth analytic superspaces}

Let $\Xcal$ be an analytic superspace. As in the classical case, the Hodge cohomology groups of $\Xcal$ can be described in terms of $\bar\partial$ acting on the complex of global sections of the Dolbeault resolution.
\begin{prop}\label{prop:hodge}
One has
$$
H^{p,q}(\Xcal)\simeq H^q(\Gamma(X,\Ac_{\Xcal_\C}^{p,\bullet}),\bar\partial)\,.
$$
\end{prop}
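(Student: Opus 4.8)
The plan is to copy the classical proof of the Dolbeault theorem, using the super Dolbeault resolution already established. By Definition \ref{def:hodge}, $H^{p,q}(\Xcal)=H^q(X,\Omega_\Xcal^p)$ is just the sheaf cohomology on $X$ of the coherent sheaf $\Omega_\Xcal^p$ of super holomorphic $p$-forms. So it suffices to exhibit an acyclic resolution of $\Omega_\Xcal^p$ whose complex of global sections is $(\Ac_{\Xcal_\C}^{p,\bullet},\bar\partial)$.

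First I would invoke Proposition \ref{prop:superdolbeault}: the super Dolbeault complex
$$
0\to \Omega_\Xcal^p\to \Ac_{\Xcal_\C}^{p,0}\xrightarrow{\bar\partial}\Ac_{\Xcal_\C}^{p,1}\xrightarrow{\bar\partial}\Ac_{\Xcal_\C}^{p,2}\xrightarrow{\bar\partial}\cdots
$$
is exact, i.e.\ $\Ac_{\Xcal_\C}^{p,\bullet}$ is a resolution of $\Omega_\Xcal^p$. Next I would check that each sheaf $\Ac_{\Xcal_\C}^{p,q}$ is acyclic for the global sections functor on $X$. These are sheaves on the differentiable manifold underlying $X$ (the bosonic reduction of the associated differentiable supermanifold $\Xcal^\infty$, cf.\ Section \ref{ss:gaga}) and they are modules over $\Ac_{\Xcal_\C}$; since an ordinary differentiable manifold admits smooth partitions of unity subordinate to any open cover, $\Ac_{\Xcal_\C}$ is a fine sheaf of rings, hence every $\Ac_{\Xcal_\C}^{p,q}$ is a fine, hence soft, hence acyclic sheaf, so $H^i(X,\Ac_{\Xcal_\C}^{p,q})=0$ for $i>0$. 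Finally, the standard abstract de Rham argument — break the resolution into the short exact sequences $0\to Z^q\to \Ac_{\Xcal_\C}^{p,q}\to Z^{q+1}\to 0$ where $Z^q=\ker(\bar\partial\colon \Ac_{\Xcal_\C}^{p,q}\to \Ac_{\Xcal_\C}^{p,q+1})$, chase the resulting long exact sequences using the vanishing above, or equivalently compare the two hypercohomology spectral sequences of the resolution — yields
$$
H^{p,q}(\Xcal)=H^q(X,\Omega_\Xcal^p)\simeq H^q\!\big(\Gamma(X,\Ac_{\Xcal_\C}^{p,\bullet}),\bar\partial\big)\,.
$$

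The only point needing a word of justification — and it is a mild one rather than a genuine obstacle — is the fineness of $\Ac_{\Xcal_\C}^{p,q}$ in the super setting. But this reduces entirely to the classical existence of smooth partitions of unity on the underlying ordinary manifold: the odd coordinates contribute only finitely many nilpotent generators to $\Ac_{\Xcal_\C}$ and do not interfere with the softness argument, so the usual proof that the sheaf of smooth differential forms is fine carries over verbatim. Everything else is formal.
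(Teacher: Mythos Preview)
Your proof is correct and follows essentially the same approach as the paper: invoke the super Dolbeault resolution (Proposition \ref{prop:superdolbeault}) and observe that the sheaves $\Ac_{\Xcal_\C}^{p,q}$ are fine, hence acyclic. The paper's proof is the one-line version of what you wrote; your extra justification of fineness via partitions of unity on the underlying bosonic manifold is exactly the standard argument and needs no further elaboration.
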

\begin{proof} It follows from the Super Dolbeault Proposition \ref{prop:superdolbeault} since the sheaves $\Ac_{\Xcal_\C}^{p,q}$ are fine.
\end{proof}

If we consider the bicomplex $\Ac_{\Xcal_\C}^{\bullet,\bullet}(X)=(\Ac^{p,q}_{\Xcal_\C}(X),\partial,\bar\partial)$ of global sections of the de Rham bicomplex, Proposition \ref{prop:superdolbeault} implies that its cohomology with respect to $\bar\partial$ is actually the superholomorphic complex of $\Xcal$. One then has
\begin{prop}[Fr\"ohlicher spectral sequence]\label{prop:frolicher}
There is a convergent spectral sequence  
$$
E_2^{p,q}=H^{p,q}(\Xcal) \implies E_\infty^{p+q}=H^{p+q}_{dR}(\Xcal)\simeq H^{p+q}(X,\C)\,.
$$
that relates Hodge and de Rham cohomologies.
\qed
\end{prop}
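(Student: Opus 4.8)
The content is essentially already assembled in the results that precede the statement, so the plan is short: take the spectral sequence of the de Rham double complex of global sections $\Ac^{\bullet,\bullet}_{\Xcal_\C}(X)=(\Ac^{p,q}_{\Xcal_\C}(X),\partial,\bar\partial)$ introduced just above (equivalently, the hypercohomology spectral sequence of the stupid filtration on $\Omega^\bullet_\Xcal$), and identify its first page and its abutment using Propositions \ref{prop:superdolbeault}, \ref{prop:hodge} and \ref{prop:bicomplex}. I would use the spectral sequence obtained by filtering by the holomorphic degree $p$, i.e.\ by first taking cohomology with respect to $\bar\partial$.

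First, for the first page: fixing $p$, the column $(\Ac^{p,\bullet}_{\Xcal_\C}(X),\bar\partial)$ is the complex of global sections of the super Dolbeault resolution of $\Omega^p_\Xcal$ (Proposition \ref{prop:superdolbeault}), and since the sheaves $\Ac^{p,q}_{\Xcal_\C}$ are fine, hence acyclic, its cohomology is $H^q(X,\Omega^p_\Xcal)=H^{p,q}(\Xcal)$; this is exactly Proposition \ref{prop:hodge}, and the induced differential is the one descending from the holomorphic de Rham differential $\partial$, so the first page is $E_1^{p,q}=H^{p,q}(\Xcal)$. Next, for the abutment: the total complex of $\Ac^{\bullet,\bullet}_{\Xcal_\C}(X)$ is $\Gamma(X,\Ac^\bullet_{\Xcal_\C})$, whose cohomology is $H^\bullet_{dR}(\Xcal^\infty)$, and by Proposition \ref{prop:bicomplex} this is isomorphic both to $H^\bullet(X,\C)$ and --- via the quasi-isomorphism $\Omega^\bullet_\Xcal\hookrightarrow\Ac^\bullet_{\Xcal_\C}$ and acyclicity of the $\Ac^{p,q}_{\Xcal_\C}$, which give $H^\bullet_{dR}(\Xcal)=\Hs^\bullet(X,\Omega^\bullet_\Xcal)=H^\bullet(\Gamma(X,\Ac^\bullet_{\Xcal_\C}))$ --- to $H^\bullet_{dR}(\Xcal)$. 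Assembling the two ends yields the asserted convergent spectral sequence $H^{p,q}(\Xcal)\Rightarrow H^{p+q}_{dR}(\Xcal)\simeq H^{p+q}(X,\C)$.

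The one point that genuinely requires care --- and the place where the super setting departs from the classical one --- is convergence. In contrast with the classical case, the de Rham complex here is unbounded above: the differentials $d\theta_j$ of the odd coordinates commute among themselves and behave as polynomial rather than exterior generators (as recorded in the proof of Lemma \ref{lem:superPoincare}), so $\Omega^p_\Xcal\neq 0$ for every $p\geq 0$ and the double complex $\Ac^{\bullet,\bullet}_{\Xcal_\C}(X)$ has infinitely many nonzero columns; one therefore cannot simply invoke ``bounded complex''. However the double complex is concentrated in the first quadrant, and since $X$ is noetherian one has $H^q(X,\Omega^p_\Xcal)=0$ for $q>\dim X$, so the first page is supported in the horizontal strip $0\leq q\leq\dim X$; for each total degree $n$ only the finitely many terms with $p+q=n$ survive, the induced filtration on $H^n_{dR}(\Xcal)$ is finite, and convergence follows by the standard first-quadrant argument. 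Concretely I would organise the write-up as: (i) recall the double complex and Definitions \ref{def:deRham} and \ref{def:hodge}; (ii) identify $E_1$ via Propositions \ref{prop:superdolbeault} and \ref{prop:hodge}; (iii) identify the abutment via Proposition \ref{prop:bicomplex}; (iv) note the finiteness of the filtration to secure convergence.
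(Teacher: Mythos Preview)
Your proposal is correct and follows essentially the same approach as the paper: take the spectral sequence of the global-sections double complex $\Ac^{\bullet,\bullet}_{\Xcal_\C}(X)$, identify the first page via super Dolbeault (Propositions \ref{prop:superdolbeault} and \ref{prop:hodge}), and identify the abutment via Proposition \ref{prop:bicomplex}. Two minor remarks: the paper labels the Hodge page as $E_2$ while you (in the standard convention for the filtration by holomorphic degree) correctly call it $E_1$; and your explicit discussion of convergence in the unbounded super setting is a welcome addition that the paper leaves implicit --- though note that first-quadrant alone already suffices, so the vanishing for $q>\dim X$ is a pleasant extra rather than a necessity.
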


\begin{remark} In the classical case, the Hodge decomposition $H^{n}(X,\C)=\oplus_{p+q=n} H^{p,q}(X)$ implies that the Fr\"ohlicher spectral sequence degenerates at page one. In the super setting, such a decomposition may fail to exist. Actually, a Hodge decomposition would imply that all the cohomology vector superspaces $H^{p,q}(\Xcal)$ are even, which is false in most cases. Take, for instance, a supercurve $\Xcal=(X,\Oc_{\Xcal}=\Oc_X\oplus\Lcl)$, where $X$ is a smooth proper curve of genus $g$ and $\Lcl$ is a line bundle on $X$; then $H^{0,0}(\Xcal)=H^0(X, \Oc_{\Xcal})=\C\oplus\Pi H^0(X,\Lcl)$, which is different from $H^0(X^{an},\C)$ if $H^0(X,\Lcl)\neq 0$.

Even the weaker statement $H^{n}(X,\C)=\oplus_{p+q=n} H^{p,q}(\Xcal)_+$ is in general false: for a supercurve as above, one can decompose $\Omega_\Xcal$ \emph{as an $\Oc_X$-module}, in the form $\Omega_{\Xcal}\simeq (\Omega_X\oplus\Lcl^2)\oplus\Pi (\Lcl\otimes \Omega_X\oplus\Lcl)$, so that $H^{0,1}(\Xcal)_+\simeq\C^g\oplus H^1(X,\Lcl)$ and $H^{1,0}(\Xcal)_+\simeq\C^g\oplus H^0(X,\Lcl^2)$; thus, $H^1(X^{an},\C)\neq H^{0,1}(\Xcal)_+\oplus H^{1,0}(\Xcal)_+$ whenever either $H^1(X,\Lcl)$ or $H^0(X,\Lcl^2)$ is nonzero (for instance, if $\Lcl=\Oc_X$).

Another proof of the non-degenerateness of the Fr\"ohlicher spectral sequence at page one for a supercurve with $g=1$ and $\Lcl=\Oc_X$ is \cite[Example 5.13]{CaNoRe20}.
\end{remark}

\section{Supercycles}\label{s:supercycles}

In this Section we develop a theory of supercycles for superschemes, adapting to the ``super'' setting the classical theory (see, for instance, \cite{Fu98, Stacks}).

\subsection{Preliminary algebraic results} We shall need some preliminary results about supercommutative algebra. In this part we do not 
need to assume that the superrings we consider are algebras over a field.

\subsubsection{Artin superrings and $Z_2$-graded modules}
We follow mainly \cite{We09}. The results given here without a proof can be found there.
\begin{lemma}\label{lem:artin}
A superring $\As$ is artinian if and only if the bosonic reduction $A$ is an artinian commutative ring.
\end{lemma}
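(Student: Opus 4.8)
The plan is to reduce the statement to the well-known commutative case by exploiting the finite filtration of $\As$ by powers of the odd ideal $J_\As$, together with Lemma~\ref{lem:artin}-type control of graded ideals. First I would recall the structure already set up in Section~\ref{s:supergeom}: by hypothesis $J_\As$ is finitely generated and $\As^{n+1}\cap J_\As^{n+1}=0$ for some $n$ (the odd ideal is nilpotent since odd elements square to elements of $J_\As^2$ and there are finitely many generators), so we have a finite filtration
$$
\As \supseteq J_\As \supseteq J_\As^2 \supseteq \dots \supseteq J_\As^{N}\supseteq J_\As^{N+1}=0
$$
of $\As$ by $\Z_2$-graded ideals, each of whose successive quotients $J_\As^i/J_\As^{i+1}$ is a $\Z_2$-graded module over the bosonic reduction $A=\As/J_\As$, and in fact a finitely generated $A$-module since $Gr_{J_\As}(\As)$ is finitely generated over $A$.

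Next I would argue both implications. For the forward direction, assume $\As$ is artinian, i.e.\ every descending chain of $\Z_2$-graded ideals of $\As$ stabilizes. Any descending chain of ideals of $A$ pulls back to a descending chain of $\Z_2$-graded ideals of $\As$ containing $J_\As$; since these stabilize in $\As$, the original chain stabilizes in $A$, so $A$ is artinian. (Here I use that an ideal of $A$ is the same as a $\Z_2$-graded ideal of $\As$ containing $J_\As$, and the correspondence is inclusion-preserving.) For the converse, assume $A$ is artinian. I would show that $\As$, viewed as a $\Z_2$-graded module over itself, has a finite composition series of $\Z_2$-graded $\As$-submodules with simple quotients; this gives artinianness (and noetherianness) of $\As$ in the graded sense. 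Using the filtration above, it suffices to show each $J_\As^i/J_\As^{i+1}$ has finite length as a $\Z_2$-graded $\As$-module. But $J_\As$ annihilates this quotient, so it is a finitely generated $\Z_2$-graded module over the artinian commutative ring $A$; a finitely generated module over an artinian commutative ring has finite length, and adding the $\Z_2$-grading only refines the composition series by at most a factor of two (each ungraded simple module, when regarded with its grading, is either already graded-simple or splits into its even and odd parts). Hence each graded piece has finite length over $\As$, and concatenating the filtrations yields a finite composition series for $\As$, so $\As$ is artinian.

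The main obstacle I anticipate is purely bookkeeping with the $\Z_2$-grading: one must be careful that "artinian superring" means the descending chain condition on $\Z_2$-graded ideals (not all ideals), and correspondingly that "artinian $A$-module" statements need to be upgraded to their graded versions. The key technical point to nail down is that a finitely generated $\Z_2$-graded module $M$ over an artinian commutative ring $A$ has a finite composition series \emph{in the category of $\Z_2$-graded $A$-modules}; this follows because $M$ has finite length as an ordinary $A$-module (standard commutative algebra), and any finite filtration by ordinary submodules can be refined to one by $\Z_2$-graded submodules at the cost of at most doubling its length, since for a graded submodule $N\subseteq M$ the subquotient is graded and a non-graded simple subquotient $S$ sits in a graded module of length at most $2$. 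Everything else — the identification of graded ideals of $\As$ over $J_\As$ with ideals of $A$, the nilpotence of $J_\As$, and finite generation of the associated graded — is already recorded in Section~\ref{s:supergeom} or is immediate, so once this graded-length lemma is in hand the proof is a short assembly. I would also remark (as the cited reference \cite{We09} presumably does) that the same argument shows $\As$ is artinian iff it is noetherian iff $\Spec A$ is finite and $\Oc$ has finite stalks, paralleling the commutative dictionary.
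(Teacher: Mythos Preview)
Your proposal is correct and follows essentially the same route as the paper: both use the finite filtration of $\As$ by powers of $J_\As$, observe that the successive quotients are finitely generated $A$-modules annihilated by $J_\As$, and conclude artinianness from the artinianness of $A$. You are somewhat more explicit than the paper about the $\Z_2$-graded bookkeeping and phrase the converse via composition series rather than directly via the artinian property, but this is only a cosmetic difference.
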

\begin{proof} It is clear that if $\As$ is artinian, then $A$ is artinian as well. For the converse, note that for some $n$ there is a filtration $\As\supset J_\As\supseteq J_\As^2\supseteq \dots \supseteq J_\As^n\supset J_\As^{n+1}=0$. The successive quotients are finitely generated $A$-modules,  so that they are artinian $A$-modules because $A$ is artinian. Then they are artinian $\As$-modules as they are annihilated by $J_\As$, so that all the terms of the filtration are artinian $\As$-modules.
\end{proof}

As in \cite{We09} a composition series in a $\Z_2$-graded module $M$ over a superring $\As$ is a chain
$$
0=M_0\subset M_1\subset\dots
$$
of submodules of $M$ such that the successive quotients (or factors) $M_i/M_{i-1}$ are simple $\As$-modules. Then $M_i/M_{i-1}$ is either \emph{even}, that is, it is isomorphic to $\As/\af$ for an ideal $\af$ of $\As$, or \emph{odd}, i.e., it is isomorphic to $\Pi\As/\bfr$ for an ideal $\bfr$ of $\As$. The module $M$ is said to be of \emph{finite length} if it has a finite composition series.

Let us consider the commutative ring $\Z^2=\Z\times\Z$ with the product given by $(m,n)\cdot (m',n')=(mm'+nn', mn'+m'n)$.
\begin{defin}\label{def:length} The (super) length of a $\Z_2$-graded $\As$-module $M$ of finite length is
$$
\ell_\As(M)=(\ell_\As(M)_+,\ell_\As(M)_- )\in \Z^2\,,
$$
where $\ell_\As(M)_+$ (resp. $\ell_\As(M)_-$) is the number of even (resp. odd) successive quotients of a finite composition series
$$
0=M_0\subset M_1\subset\dots\subset M_n=M\,.
$$
One easily sees that $\ell_\As(M)$ so defined is independent of the choice of a composition series of $M$.
\end{defin}
\begin{remark} The length defined in \cite[3.4]{We09} is an integer number, actually the sum of $\ell_\As(M)_+$ and $\ell_\As(M)_-$, i.e., the length of any of the composition series of $M$.
\end{remark}

\begin{lemma}\label{lem:lenght} 
\begin{enumerate}
\item
Let $f\colon \As\to \Bs$ be a flat local morphism of local superrings. For every $\Z_2$-graded $\As$-module $M$ one has
$$
\ell_\Bs(M\otimes_\As \Bs)= \ell_\As(M)\cdot \ell_\Bs(\Bs/\mf_\As \Bs)
$$
with the product of $\Z^2$. In particular, if $\Bs/\mf_\As \Bs$ is of finite length over $\Bs$, then $M$ is of finite length if and only if $M\otimes_\As\Bs$ is of finite length.
\item If $\As$ and $\Bs$ are artinian, then
$$
\ell_\Bs(\Bs)= \ell_\As(\As)\cdot \ell_\Bs(\Bs/\mf_\As\Bs)\,.
$$
\end{enumerate}
\end{lemma}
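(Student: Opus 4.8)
The plan is to prove part~(1) by d\'evissage along a composition series, and then to deduce part~(2) by specialising to $M=\As$. Before starting I would record a few elementary facts. First, the $\Z^2$-valued super length is additive on short exact sequences of finite-length $\Z_2$-graded modules: refining the filtration $0\subset M'\subset M$ by composition series of $M'$ and of $M/M'$ produces one for $M$, and this respects the even/odd count of factors, so $\ell(M)=\ell(M')+\ell(M'')$ in the additive group of $\Z^2$. Second, parity reversal multiplies the length by $(0,1)$: if $\ell_\As(M)=(a,b)$ then $\ell_\As(\Pi M)=(b,a)=(0,1)\cdot(a,b)$. Third, since $\As$ is a \emph{local} superring it has a unique maximal graded ideal, $\mf_\As$ (any maximal graded ideal contains the nilpotent ideal $J_\As$, hence corresponds to a maximal ideal of the local ring $A$), so a $\Z_2$-graded $\As$-module is simple exactly when it is isomorphic to $\kappa:=\As/\mf_\As$ or to $\Pi\kappa$; indeed a simple module is cyclic on a homogeneous generator $x$, hence is $\As/\Ann(x)$ or $\Pi\As/\Ann(x)$ with $\Ann(x)$ a maximal graded ideal.

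For part~(1), assume first that $M$ has finite length and pick a composition series $0=M_0\subset M_1\subset\dots\subset M_n=M$ with simple factors $S_i=M_i/M_{i-1}$, each isomorphic to $\kappa$ or to $\Pi\kappa$. Since $\Bs$ is flat over $\As$, applying $-\otimes_\As\Bs$ keeps the filtration $0=M_0\otimes_\As\Bs\subset\dots\subset M_n\otimes_\As\Bs=M\otimes_\As\Bs$ exact, with successive quotients $S_i\otimes_\As\Bs$ equal to $\Bs/\mf_\As\Bs$ or $\Pi(\Bs/\mf_\As\Bs)$ according to the parity of $S_i$. Assuming $\Bs/\mf_\As\Bs$ has finite length over $\Bs$ (otherwise both sides of the asserted formula are infinite in the appropriate component), additivity of $\ell_\Bs$ and the parity-reversal rule give
\[
\ell_\Bs(M\otimes_\As\Bs)=\Bigl(\ell_\As(M)_+\,(1,0)+\ell_\As(M)_-\,(0,1)\Bigr)\cdot\ell_\Bs(\Bs/\mf_\As\Bs)=\ell_\As(M)\cdot\ell_\Bs(\Bs/\mf_\As\Bs),
\]
using distributivity of the product of $\Z^2$ over its sum and $\ell_\As(M)=\ell_\As(M)_+\,(1,0)+\ell_\As(M)_-\,(0,1)$; in particular $M\otimes_\As\Bs$ is of finite length. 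For the remaining implication of the biconditional I would use that a flat local morphism of local superrings is automatically faithfully flat (flatness together with $\Bs/\mf_\As\Bs\neq 0$, which holds because $f(\mf_\As)\subseteq\mf_\Bs$; the classical argument carries over). If $M$ is not of finite length it fails to be noetherian or fails to be artinian, so it carries an infinite strictly monotone chain of $\As$-submodules; applying $-\otimes_\As\Bs$, which is exact by flatness and sends nonzero subquotients to nonzero ones by faithful flatness, yields an infinite strictly monotone chain of $\Bs$-submodules of $M\otimes_\As\Bs$, so the latter is not of finite length either.

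Finally, part~(2) is the case $M=\As$ of part~(1): when $\As$ is artinian its bosonic reduction $A$ is artinian by Lemma~\ref{lem:artin}, and the filtration $\As\supseteq J_\As\supseteq J_\As^2\supseteq\dots$ is finite ($J_\As$ being nilpotent) with finitely generated $A$-module quotients, so $\As$ has finite length over itself and $\ell_\As(\As)$ is defined; likewise $\ell_\Bs(\Bs/\mf_\As\Bs)$ is finite since $\Bs$ is artinian; and $\As\otimes_\As\Bs=\Bs$. I expect the only genuine subtlety to be the bookkeeping in the ring $\Z^2$---consistently recording parity reversal as multiplication by $(0,1)$ and invoking distributivity legitimately---together with the faithful-flatness input used for the converse in part~(1); the rest is the standard d\'evissage, transplanted to the graded setting.
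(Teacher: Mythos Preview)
Your proof is correct and follows essentially the same approach as the paper's: d\'evissage along a composition series of $M$, tensoring up by flatness to get a filtration of $M\otimes_\As\Bs$ with factors $\Bs/\mf_\As\Bs$ or $\Pi(\Bs/\mf_\As\Bs)$, then refining each factor by a composition series of $\Bs/\mf_\As\Bs$ and counting parities; the converse via faithful flatness and part~(2) by specialising to $M=\As$ are also the same. The only cosmetic difference is that the paper writes out the component count $(\ell_+\bar\ell_+ + \ell_-\bar\ell_-,\ \ell_+\bar\ell_- + \ell_-\bar\ell_+)$ explicitly, whereas you package it via the $\Z^2$-ring identity $\ell_\As(M)=\ell_+\,(1,0)+\ell_-\,(0,1)$ and distributivity.
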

\begin{proof} (1) 
 Since $f$ is flat, its bosonic reduction $f_\bos\colon A\to B$ is flat as well; as the induced map $\SSpec \Bs \to \SSpec\As$ is surjective onto the maximal spectrum, $f_\bos $ is faithfully flat. One finishes easily by using Lemma 2.10 of \cite{BrHR21}.\footnote{In Definition \ref{def:proper} we have defined a faithfully flat morphism of superschemes as a flat surjective one. Here we have proved that this implies that the local rings of the target are faithfully flat algebras over the local rings of the source in the algebraic meaning.}

Now, if 
$0=M_0\subset M_1\subset\dots\subset M_n=M$ is a finite chain in $M$, one gets a finite chain $0=M_0\otimes_\As\Bs\subset M_1\otimes_\As\Bs\subset\dots\subset M_n\otimes_\As\Bs=M\otimes_\As\Bs$ in $M\otimes_\As\Bs$. If the $\As$-module $M$ is not of finite length, the $\Bs$-module $M\otimes_\As\Bs$ is not of finite length either. If $M$ is of finite length $\ell=(\ell_+,\ell_-)$ and $0=M_0\subset M_1\subset\dots\subset M_n=M$ is a finite composition series, the chain $0=M_0\otimes_\As\Bs\subset M_1\otimes_\As\Bs\subset\dots\subset M_n\otimes_\As\Bs=M\otimes_\As\Bs$ has $\ell_+$ even factors isomorphic to $\Bs/\mf_\As\Bs$ and $\ell_-$ odd factors isomorphic to $\Pi\,\Bs/\mf_\As\Bs$. 
Each even factor has a composition series with $\bar\ell_+$ factors isomorphic to $\Bs/\mf_\Bs$ and $\bar\ell_-$ factors isomorphic to $\Pi\,\Bs/\mf_\Bs$, where $\bar\ell=(\bar\ell_+,\bar\ell_-)$ is the length of $\Bs/\mf_\As\Bs$ as a $\Bs$-module. It follows that $M\otimes_\As\Bs$ has a composition series with $\ell_+\bar\ell_++\ell_-\bar\ell_-$ even factors and $\ell_+\bar\ell_-+\ell_-\bar\ell_+$ odd factors, which finishes the proof.

(2) follows from (1) taking $M=\As$.
\end{proof}

\subsubsection{Order functions}

Let $\Bs$ be a superdomain of even dimension 1. If $b\in\Bs$ is even and not nilpotent, the quotient $\Bs/(b)$ is an Artin superring. We can then define the order of $b$ as the (super) length of $\Bs/(b)$ as a module over itself:
$$
\ord_{\Bs}(b)=\ell(\Bs/(b))\in \Z^2\,.
$$
As in the non-super situation, the order transforms products into sums, that is, $\ord_{\Bs}(b b')=\ord_{\Bs}(b)+\ord_{\Bs}(b')$.

The order can be extended to rational superfunctions.
\begin{defin}\label{def:order} If $g\in \K(\Bs)^\ast$ an even rational superfunction. The order of $g$ is
$$
\ord_{\Bs}(g)=\ord_{\Bs}(b_1)-\ord_{\Bs}(b_0)\in \Z^2\,,
$$
where $g=b_1/b_0$ with $b_0,b_1\in\Bs-J_{\Bs}$. The order is independent on the representation of $g$ as a quotient of two elements of $\Bs$ because the order transforms products in $\Bs$ into sums.
\end{defin}

By its very definition, one   has
\begin{equation}\label{eq:order}
\ord_{\Bs}(g g')=\ord_{\Bs}(g)+\ord_{\Bs}(g')\,,
\end{equation}
for even rational superfunctions.

\subsubsection{Superlattices and distance}\label{sss:lattices}

Let $\Bs$ be a superdomain of even dimension 1 and $E$ a free module of rank $p|q$ over the superfield $\K(\Bs)$. 

\begin{defin}
A superlattice in $E$ is a graded finite $\Bs$-submodule $M\subset E$ such that $M\otimes_\Bs\K(\Bs)\iso E$.
\end{defin}
The analogous super version of \cite[Lemma 10.121.4]{Stacks} holds. In particular, given two superlattices $M$, $M'$ in $E$, the $\Bs$-modules $M/M\cap M'$ and $M'/M\cap M'$ have finite length.

\begin{defin}\label{def:distance} The distance between $M$ and $M'$ is
$$
d(M,M')= \ell_\Bs(M/M\cap M')-\ell_\Bs(M'/M\cap M') \in \Z^2\,.
$$
\end{defin}

\begin{lemma}\label{lem:distandord} Let $\phi\colon E \to E$ be an (even) isomorphism of $\K(\Bs)$-modules. For any superlattice $M\subset E$ one has
$$
d(M,\phi(M))=\ord_\Bs(\ber(\phi))\,,
$$
where $\ber(\phi)\in \Bs$ is the berezinian of the isomorphism $\phi$
\end{lemma}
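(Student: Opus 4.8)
The plan is to exhibit both $\phi\mapsto d(M,\phi(M))$ and $\phi\mapsto\ord_\Bs(\ber(\phi))$ as group homomorphisms on the group $G$ of even $\K(\Bs)$-linear automorphisms of $E$ that do not depend on the chosen superlattice $M$, and then to compare them on a generating set of $G$. (The hypothesis that $\phi$ be even is essential, since $\ber$ is only defined for even automorphisms; in a basis adapted to the grading, $G\simeq\op{GL}_{p|q}(\K(\Bs))$.) First I would record two properties of the distance. \emph{Invariance}: for $\psi\in G$ the map $\psi$ is $\Bs$-linear and even and carries $M\cap M'$ onto $\psi(M)\cap\psi(M')$, hence induces an even isomorphism of finite-length $\Z_2$-graded $\Bs$-modules $M/(M\cap M')\iso\psi(M)/(\psi(M)\cap\psi(M'))$; therefore both components of $\ell_\Bs$ are preserved and $d(\psi(M),\psi(M'))=d(M,M')$. (An odd $\psi$ would interchange the two components of $\ell_\Bs$, which is precisely why evenness is needed here.) \emph{Cocycle identity}: for any three superlattices $M,M',M''$ one has $d(M,M'')=d(M,M')+d(M',M'')$; choosing a superlattice $L\subset M\cap M'\cap M''$, additivity of the $\Z^2$-valued length $\ell_\Bs$ in short exact sequences of graded modules gives $d(M,M')=\ell_\Bs(M/L)-\ell_\Bs(M'/L)$, and the identity follows; in particular $d(N,N)=0$ and $d(N,M)+d(M,N)=0$. (This is the super-analogue of \cite[Lemma~10.121.4]{Stacks} already invoked above.)

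These two facts already give the formal part. For superlattices $M,N$, two applications of the cocycle identity and one of invariance give $d(N,\phi(N))=d(N,M)+d(M,\phi(M))+d(\phi(M),\phi(N))=d(N,M)+d(M,\phi(M))+d(M,N)=d(M,\phi(M))$, so $\delta(\phi):=d(M,\phi(M))$ is independent of $M$; and then $\delta(\phi\phi')=d(M,\phi\phi'(M))=d(M,\phi'(M))+d(\phi'(M),\phi(\phi'(M)))=\delta(\phi')+\delta(\phi)$, the second summand being $\delta(\phi)$ by the independence just proved. Hence $\delta\colon G\to\Z^2$ is a homomorphism. On the other side, the Berezinian $\ber\colon G\to\K(\Bs)^\ast$ is multiplicative, and $\ord_\Bs$ turns products into sums by \eqref{eq:order}, so $\phi\mapsto\ord_\Bs(\ber(\phi))$ is likewise a homomorphism $G\to\Z^2$, patently independent of $M$.

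It remains to check that the two homomorphisms agree on generators of $G$. Here I would use super Gaussian elimination over $\K(\Bs)$ — whose even part $\K(\Bs)_+$ is local with residue field $K(B)$, $B$ being the $1$-dimensional bosonic reduction of $\Bs$: every element of $G$ is a product of elementary transvections $1+a\,e_{ij}$ ($i\neq j$, with $a$ even or odd so that the transvection is even) and of diagonal automorphisms $d_i(u)$ scaling a single $e_i$ by $u\in\K(\Bs)_+^\ast$ — one clears the even--even block by ordinary Gaussian elimination over $\K(\Bs)_+$, then the two odd off-diagonal blocks, then the odd--odd block, using transvections. For a transvection $\tau=1+a\,e_{ij}$, write $a=b_1b_0^{-1}$ with $b_0$ an even non-zero-divisor and $b_1\in\Bs$; the superlattice $M_0=\Bs\,(b_0^{-1}e_i)\oplus\bigoplus_{k\neq i}\Bs\,e_k$ satisfies $\tau(M_0)=M_0$, whence $\delta(\tau)=0=\ord_\Bs(1)=\ord_\Bs(\ber(\tau))$, and by the independence of $M$ this is enough. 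For a diagonal generator $d_i(u)$, multiplicativity reduces us to $u$ an even non-zero-divisor of $\Bs$, and a direct computation with $M_0=\Bs^{p|q}$ identifies $\delta(d_i(u))=\ell_\Bs\big(M_0/d_i(u)(M_0)\big)$ — which is $\ell_\Bs(\Bs/u\Bs)$, respectively its $\Pi$-twist when $e_i$ lies in the odd block — with $\ord_\Bs(\ber(d_i(u)))$ via Definitions~\ref{def:length} and \ref{def:order}. Since two homomorphisms agreeing on a generating set coincide, the lemma follows.

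The part I expect to be the real obstacle is not the homomorphism formalism but the parity bookkeeping at the two ends: proving the super-analogue of \cite[Lemma~10.121.4]{Stacks} (finiteness and additivity for the $\Z^2$-valued distance) in the graded setting, and, in the final computation, checking that the $\Pi$-shifts arising in the odd blocks together with the $\Z^2$-conventions of Definitions~\ref{def:length} and \ref{def:order} are matched on the nose by the shifts and inverses occurring in the Berezinian of the odd--odd block.
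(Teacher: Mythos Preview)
Your proposal is correct and follows essentially the same route as the paper: reduce to generators of $GL_{p|q}(\K(\Bs))$ after establishing that both sides are homomorphisms independent of $M$, then verify the identity on each generator type. The only cosmetic difference is that the paper cites the block-matrix generators from \cite[\S 3, Thm.~5]{Ma99} (unipotent off-diagonal blocks with an elementary entry, and block-diagonal matrices) rather than your finer set of elementary transvections and single-entry diagonal scalings, and simply calls the final verification a ``standard computation'' along the lines of \cite[Lemma~10.121.4]{Stacks}; your fixed-lattice trick for transvections and your explicit flagging of the odd-block parity bookkeeping are exactly the content hidden behind that phrase.
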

\begin{proof}It is analogous to the proof of \cite[Lemma 10.121.4]{Stacks}. One first proves as there that $d(M,\phi(M))$ is independent of the superlattice $M$ and that the first member of the formula is additive on $\phi$, that is,
$$
d(M,\psi\circ\phi (M))= d(M,\psi(M))+d(M,\phi(M))\,,
$$
if $\psi$ is another automorphisms of $E$. The second member is also additive in $\phi$, due to $\ber(\psi\circ\phi)=\ber(\psi)\cdot\ber(\phi)$ and to the additivity of the degree (Equation \eqref{eq:order}). 

Choose a free basis of the $\K(\Bs)$-module $E$ and represent the automorphisms as   matrices in the superlinear group $GL_\Bs(p|q)$. The, it is enough to prove the formula for supermatrices that generate the group. The block matrices
$$
\begin{pmatrix}
I& 0\\C_3 &I
\end{pmatrix}\,, \quad \begin{pmatrix}
C_1& 0\\0 &C_4
\end{pmatrix}
\quad \begin{pmatrix}
I& C_2\\0 &I
\end{pmatrix}
$$
where $C_2$ is elementary and $C_1$, $C_4$ are invertible, generate $GL_\Bs(p|q)$ (see the proof of \cite[\S 3, Thm.\,5]{Ma99}). One easily see that we can also assume that $C_3$ is elementary. For any of those matrices, the formula of the statement is proven, along the same lines as the proof of \cite[Lemma 10.121.4]{Stacks}, by a standard computation.
\end{proof}

\subsubsection{Superdomains of even dimension 1}
Let $\As$, $\Bs$ be  superdomains of even dimension 1, with $\As$ local, $\As \to \Bs$ a finite morphism and $f\colon \Xcal=\SSpec\Bs \to \Ycal=\SSpec\As$ the induced superscheme morphism. The superdomain $\Bs$ is semilocal and its maximal ideals correspond to the (closed) points $\{x_1,\dots,x_s\}$ of the fibre $f^{-1}(y)$ over the (unique) closed point $y\in \Ycal$.
\begin{prop}\label{prop:preparation} Assume that $\K(\Bs)$ is free as a $\K(\As)$-module. If $g\in \K(\Bs)$ is an invertible rational superfunction of $\Xcal$, one has
$$
\ord_y(\ber(g))=\sum_{i=1}^s \dim_{\kappa(y)}\kappa(x_i)\cdot \ord_{x_i}(g)\,,
$$
where we have written $\ord_y(\ )=\ord_{\As}(\ )$, $\ord_{x_i}(\ )= \ord_{\Bs_{x_i}}(\ )$ for simplicity and $\kappa(y)$, $\kappa(x_i)$ are the residue fields.
\end{prop}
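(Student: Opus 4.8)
The statement is the super-analogue of the classical formula relating the order of the norm (or determinant) along a finite morphism of curves to a weighted sum of local orders. The plan is to reduce everything to the distance formula of Lemma \ref{lem:distandord} by viewing $g$ as multiplication on an appropriate $\K(\As)$-module.

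\textbf{Step 1: Set-up.} Since $\As\to\Bs$ is finite and $\K(\Bs)$ is free of some rank $p|q$ over $\K(\As)$, the rational superfunction $g\in\K(\Bs)^\ast$ acts on $E:=\K(\Bs)$ by multiplication, giving an even $\K(\As)$-linear automorphism $m_g\colon E\to E$. By construction the berezinian $\ber(m_g)\in\K(\As)$ is (up to the obvious compatibility) the norm $N_{\K(\Bs)/\K(\As)}(g)$, which one checks coincides with the element I have been calling $\ber(g)$ in the statement; this is a purely algebraic identity, and the main point is that $\ber$ is multiplicative (used already in the proof of Lemma \ref{lem:distandord}). Actually it is cleaner to write $\ber(g):=\ber(m_g)$ as the \emph{definition} and note that for $g\in\K(\As)^\ast$ this recovers $g^{p-q}$, matching the classical case.

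\textbf{Step 2: Compute the left-hand side via distance.} Take $M=\Bs$ viewed as a superlattice in $E$ (it is a graded finite $\As$-submodule with $M\otimes_\As\K(\As)\iso E$ because $\As\to\Bs$ is finite and both are domains of even dimension $1$). Then $m_g(M)=g\Bs$, and by Lemma \ref{lem:distandord},
$$
\ord_y(\ber(g))=\ord_\As(\ber(m_g))=d(\Bs, g\Bs)=\ell_\As(\Bs/\Bs\cap g\Bs)-\ell_\As(g\Bs/\Bs\cap g\Bs)\in\Z^2.
$$

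\textbf{Step 3: Localize and compute the right-hand side.} Since $\Bs$ is semilocal with maximal ideals $\mf_{x_1},\dots,\mf_{x_s}$, the $\As$-module $\Bs/\Bs\cap g\Bs$ (a torsion module, hence of finite length) decomposes as the direct sum of its localizations $(\Bs_{x_i}/\Bs_{x_i}\cap g\Bs_{x_i})$, and similarly for $g\Bs/\Bs\cap g\Bs$. Hence
$$
d(\Bs,g\Bs)=\sum_{i=1}^s d_{\As}\!\bigl(\Bs_{x_i},\,g\Bs_{x_i}\bigr),
$$
where $d_\As$ denotes the distance with lengths computed as $\As$-modules. Now for each $i$ I would compare $\ell_\As$ with $\ell_{\Bs_{x_i}}$ using Lemma \ref{lem:lenght}: for a $\Bs_{x_i}$-module $N$ of finite length one has $\ell_\As(N)=\ell_{\Bs_{x_i}}(N)\cdot[\kappa(x_i):\kappa(y)]$, the factor being the length of $\kappa(x_i)$ as an $\As$-module, i.e. $\dim_{\kappa(y)}\kappa(x_i)$ (here one uses that $\kappa(x_i)$ is even as an $\As$-module, so the product in $\Z^2$ is just scalar multiplication by the integer $\dim_{\kappa(y)}\kappa(x_i)$). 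Applying this to $N=\Bs_{x_i}/\Bs_{x_i}\cap g\Bs_{x_i}$ and to $N=g\Bs_{x_i}/\Bs_{x_i}\cap g\Bs_{x_i}$ gives
$$
d_\As(\Bs_{x_i},g\Bs_{x_i})=\dim_{\kappa(y)}\kappa(x_i)\cdot d_{\Bs_{x_i}}(\Bs_{x_i},g\Bs_{x_i})=\dim_{\kappa(y)}\kappa(x_i)\cdot\ord_{x_i}(g),
$$
the last equality by applying Lemma \ref{lem:distandord} again over the local superdomain $\Bs_{x_i}$ with $E=\K(\Bs)$, $M=\Bs_{x_i}$, $\phi=$ multiplication by $g$, whose berezinian over $\Bs_{x_i}$ is just $g$ itself (rank $1|0$). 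Summing over $i$ yields the claim.

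\textbf{Main obstacle.} The routine-looking but genuinely delicate point is the bookkeeping in $\Z^2$ in Step 3: one must be sure the comparison of $\ell_\As$ and $\ell_{\Bs_{x_i}}$ really multiplies by the \emph{integer} $\dim_{\kappa(y)}\kappa(x_i)$ (and not by some genuinely $\Z^2$-valued length), which relies on $\kappa(x_i)$ being a field, hence purely even, and on the precise flat/faithfully-flat hypotheses in Lemma \ref{lem:lenght} applying to the local map $\As\to\Bs_{x_i}$ after the usual localization argument. A secondary point to get right is identifying $\ber(g)$ as defined in the statement with $\ber(m_g)$, i.e. checking the berezinian behaves well under the ``multiplication operator'' construction; this is where one implicitly uses the same generators-of $GL(p|q)$ reduction as in the proof of Lemma \ref{lem:distandord}, or simply takes it as the definition.
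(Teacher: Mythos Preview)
Your proposal is correct and follows essentially the same approach as the paper: apply Lemma \ref{lem:distandord} to the superlattice $\Bs\subset\K(\Bs)$, localize at the maximal ideals of $\Bs$, and compare $\As$-length with $\Bs_{x_i}$-length via the residue degree. The only difference is that the paper first reduces to $g\in\Bs$ (using additivity of $\ord$ and multiplicativity of $\ber$), which forces $g\Bs\subset\Bs$ and collapses the distance to a single length $\ell_\As(\Bs/g\Bs)$, sparing you the bookkeeping with $\Bs\cap g\Bs$ and the second invocation of Lemma \ref{lem:distandord}.
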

\begin{proof} Since  the order  is additive and      the berezinian is multiplicative, it is enough to prove the statement for $g\in\Bs$. By Lemma \ref{lem:distandord} one   has
$$
\ord_y(\ber(g))=d(\Bs, g\cdot\Bs)=\ell_{\As}(\Bs/g\cdot\Bs)\,.
$$
Moreover, $\Bs/g\cdot\Bs\simeq \bigoplus_{i=1}^s \Bs_{x_i}/g\cdot \Bs_{x_i}$ and then 
$$
\ell_{\As}(\Bs/g\cdot\Bs)=\sum_{i=1}^s \ell_{\As}(\Bs_{x_i}/g\cdot \Bs_{x_i})=\sum_{i=1}^s \dim_{\kappa(y)}\kappa(x_i)  \ell_{\Bs_{x_i}}(\Bs_{x_i}/g\cdot \Bs_{x_i})\,,
$$
thus finishing the proof.
\end{proof}

\subsection{Supercycles on superschemes}

Henceforth all the superschemes  will be locally of finite type over an algebraically closed field, and all morphisms of superschemes will be locally of finite type.

\begin{defin} Let $\Xcal$ be a superscheme of even dimension $m$. 
An $h$-supercycle of $\Xcal$ is a finite sum
$$
\alpha=\sum_i (m_i, n_i) [Y_i]
$$
where $(m_i,n_i)\in\Z^2$ and the $Y_i$ are subvarieties of $X$ of dimension $h$. The set of   $h$-supercycles is a free $\Z_2$-graded module over $\Z^2$.
The group of supercycles of $\Xcal$ is the direct sum
$$
Z_\ast(\Xcal)=\bigoplus_{h=0}^m Z_h(\Xcal)\,.
$$
It is a bigraded $\Z^2$-module.
\end{defin}

Restriction to the bosonic underlying scheme $i\colon X \hookrightarrow \Xcal $ gives a morphism of graded $\Z^2$-modules
\begin{equation}\label{eq:bosred}
i^\ast\colon Z_h(\Xcal) \to Z_h (X)\otimes_\Z\Z^2\,.
\end{equation}

If $j\colon \Zc\hookrightarrow \Xcal$ is a closed immersion of superschemes, there is a natural injective morphism of $\Z^2$-modules
$$
j_\ast\colon Z_h(\Zc)\hookrightarrow Z_h(\Xcal)\,.
$$

Let $\Xcal$ be a superscheme and $j\colon\Zc\hookrightarrow\Xcal$ a closed sub-superscheme of pure even dimension $h$. Let $Z_1,\dots,Z_s$ be the irreducible components of $Z$. The local ring $\Oc_{\Zc,z_i}$ of $\Zc$ at the generic point $z_i$ of $Z_i$ is an Artin superring. Its (super) length $m_{Z_i}(\Zc):=\ell(\Oc_{\Zc,z_i})\in\Z^2$ is called the \emph{geometric supermultiplicity} of $\Zc$ at $Z_i$.
\begin{defin}\label{def:fundsupercycle} The supercycle of $\Zc$ is the supercycle
$$
[\Zc]=\sum_i m_{Z_i}(\Zc) [Z_i]\in Z_h(\Zc)\,.
$$
The supercycle of $\Zc$ in $\Xcal$ is the image $j_\ast[\Zc]$ of $[\Zc]$ by $j_\ast\colon Z_h(\Zc)\hookrightarrow Z_h(\Xcal)$.
\end{defin}

When $\Zc$ is a sub-supervariety its associated supercycle is
$$
[\Zc]=m_{Z}(\Zc) [Z]=\ell(\Oc_{\Zc,z})[Z]\,,
$$
where $z$ is the generic point.

\subsection{Flat pullback of supercycles}

Let $f\colon \Xcal\to \Ycal$ be a flat morphism of superschemes of relative even dimension $m$. 
If $Z$ is a subvariety of dimension $h$ of the scheme $Y$, the pullback $f^{-1}(Z)$ is a closed sub-superscheme of $\Xcal$ of even dimension $h+m$ whose bosonic reduction is $f_{\bos}^{-1}(Z)\hookrightarrow X$. Denote by $j\colon f^{-1}(Z)\hookrightarrow \Xcal$ the corresponding closed immersion.

\begin{defin}\label{def:pullback} The pullback of $[Z]$ is the supercycle given by
$$
f^\ast[Z]=j_\ast [f^{-1}(Z)]\,,
$$
where $[f^{-1}(Z)]$ is the supercycle given by Definition \ref{def:fundsupercycle}. This extends to a morphism
$$
f^\ast\colon Z_h(\Ycal)\to Z_{h+m}(\Xcal)
$$
of $\Z^2$-modules.
\end{defin}

A nice property of the pullback of supercycles is the following.
\begin{prop}\label{prop:pullback} If $\Zc$ is a closed sub-superscheme of $\Ycal$ of pure even dimension $h$, then
$$
f^\ast [\Zc]= [f^{-1}(\Zc)]\,.
$$
\end{prop}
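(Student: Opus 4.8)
The plan is to reduce the statement to a local, componentwise computation of (super)lengths, exactly as in the classical case, using the flatness of $f$ together with the length-multiplicativity established in Lemma~\ref{lem:lenght}. Write $Z_1,\dots,Z_s$ for the irreducible components of the closed subscheme $Z\subset Y$ underlying $\Zc$, with generic points $z_i$, so that by Definition~\ref{def:fundsupercycle} one has $[\Zc]=\sum_i m_{Z_i}(\Zc)\,[Z_i]=\sum_i \ell(\Oc_{\Zc,z_i})\,[Z_i]$. Applying $f^\ast$ and Definition~\ref{def:pullback} gives $f^\ast[\Zc]=\sum_i \ell(\Oc_{\Zc,z_i})\, f^\ast[Z_i]=\sum_i \ell(\Oc_{\Zc,z_i})\, j_{i\ast}[f^{-1}(Z_i)]$, where $j_i\colon f^{-1}(Z_i)\hookrightarrow\Xcal$ is the inclusion. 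On the other hand, $f^{-1}(\Zc)$ is a closed sub-superscheme of $\Xcal$ of pure even dimension $h+m$ (flatness of $f$ and relative even dimension $m$), and its underlying scheme $f_{\bos}^{-1}(Z)$ has irreducible components among the irreducible components of the various $f_{\bos}^{-1}(Z_i)$; so it suffices to compare, for each irreducible component $W$ of $f_{\bos}^{-1}(Z)$, the coefficient of $[W]$ on the two sides.

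\textbf{Key steps.} First I would note that the question is local on $\Xcal$ and on $\Ycal$, so we may assume $\Xcal=\SSpec\Bs$, $\Ycal=\SSpec\As$ with $\As\to\Bs$ flat, and we may replace $\Zc$ by a single component, i.e.\ assume $Z$ is irreducible with generic point $z$; write $\pf\subset\As$ for the corresponding prime (of the bosonic reduction $A$). Let $w$ be the generic point of an irreducible component $W$ of $f_{\bos}^{-1}(Z)$, corresponding to a prime $\qf\subset B$ with $\qf\cap A=\pf$. Localizing, we reduce to the following algebraic claim: if $\As_\pf\to\Bs_\qf$ is a flat local morphism of local superrings and $\As_\pf$ is an Artin superring (which holds since $z$ is a generic point of $Z$, so $\Oc_{\Zc,z}$ is Artinian by hypothesis and Lemma~\ref{lem:artin} — more precisely we apply this after passing to $\Zc$), then
\[
\ell_{\Bs_\qf}\!\big(\Bs_\qf\otimes_{\As_\pf}\Oc_{\Zc,z}\big)=\ell_{\As_\pf}\!\big(\Oc_{\Zc,z}\big)\cdot \ell_{\Bs_\qf}\!\big(\Bs_\qf/\mf_{\As_\pf}\Bs_\qf\big),
\]
which is precisely Lemma~\ref{lem:lenght}(1). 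Here $\Oc_{\Zc,z}$ is the stalk of $\Oc_\Zc$ at $z$, and $\Bs_\qf\otimes_{\As_\pf}\Oc_{\Zc,z}$ is the stalk at $w$ of $\Oc_{f^{-1}(\Zc)}$ by flat base change; while $\ell_{\Bs_\qf}(\Bs_\qf/\mf_{\As_\pf}\Bs_\qf)$ is exactly the geometric supermultiplicity $m_W(f^{-1}(Z_i))$ appearing as the coefficient of $[W]$ in $f^\ast[Z_i]$ via Definition~\ref{def:fundsupercycle} applied to the fibre. Summing over the components $W$ over a fixed $Z_i$, and then over $i$ weighted by $\ell(\Oc_{\Zc,z_i})$, reproduces exactly the coefficient of $[W]$ in $[f^{-1}(\Zc)]$, since $m_W(f^{-1}(\Zc))=\ell(\Oc_{f^{-1}(\Zc),w})$ and $\Oc_{f^{-1}(\Zc),w}=\Bs_\qf\otimes_{\As}\Oc_{\Zc,z_i}$ where $z_i$ is the generic point of the component of $Z$ through which $W$ lies.

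\textbf{Main obstacle.} The computations themselves are routine once the right statement is localized; the delicate point is the bookkeeping of which irreducible components of $f_{\bos}^{-1}(Z)$ contribute and making sure the stalk $\Oc_{f^{-1}(\Zc),w}$ is correctly identified with the base change $\Bs_\qf\otimes_{\As}\Oc_{\Zc,z_i}$ — i.e.\ that forming $f^{-1}(\Zc)$ (a superscheme-theoretic preimage) commutes with the relevant localizations, which uses flatness of $f$ in an essential way and the fact that a component $W$ of $f_{\bos}^{-1}(Z)$ dominates a unique component $Z_i$ of $Z$ (because $f$ flat implies $f_{\bos}$ flat, hence open, hence the generic point $w$ maps to a generic point of $Z$). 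Once that identification is in place, Lemma~\ref{lem:lenght}(1) finishes the argument. I would therefore structure the write-up as: (i) reduce to the affine, single-component, localized situation; (ii) identify both sides' $[W]$-coefficients as (super)lengths of explicit Artinian superrings; (iii) invoke Lemma~\ref{lem:lenght}(1); (iv) sum back up.
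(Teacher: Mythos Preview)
Your argument is correct and in essence identical to the paper's: both localize at generic points of components of $f_{\bos}^{-1}(Z)$ and invoke Lemma~\ref{lem:lenght} to match the (super)length multiplicities. The one difference is that the paper begins with a cleaner reduction: since the base change $f^{-1}(\Zc)\to\Zc$ is again flat of relative even dimension $m$, one may replace $\Ycal$ by $\Zc$ and $\Xcal$ by $f^{-1}(\Zc)$, i.e.\ assume $\Zc=\Ycal$ from the outset. After this step there is a single generic point $\xi$ of $Y$ dominated by each generic point $\xi_i$ of $X$, and the comparison becomes $\ell_{\Bs}(\Bs)=\ell_{\As}(\As)\cdot\ell_{\Bs}(\Bs/\mf_{\As}\Bs)$ with $\As=\Oc_{\Ycal,\xi}$, $\Bs=\Oc_{\Xcal,\xi_i}$, which is Lemma~\ref{lem:lenght}(2). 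This eliminates precisely the bookkeeping you flagged as the ``main obstacle'' --- tracking which component $Z_i$ of $Z$ a given $W$ dominates and identifying $\Oc_{f^{-1}(\Zc),w}$ as a tensor product --- since after the reduction there is nothing to track. Your write-up would benefit from inserting this reduction as step (0); the remaining steps then collapse to a single line.
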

\begin{proof} Since the base-change morphism $\Xcal_\Zc=f^{-1}(\Zc) \to \Zc$ is flat, we can assume that $\Zc=\Ycal$. Let $X_i$ be the irreducible components  of $\Xcal$. If $\xi_i$ is the generic point of $X_i$ and $\xi$ is the generic point of $Y$, the superring morphism $\As=\Oc_{\Ycal,\xi} \to \Bs=\Oc_{\Xcal,\xi}$ is flat, because $f$ is flat. The coefficients of $X_i$ in $[f^{-1}(\Ycal)]=[\Xcal]$ and $f^\ast[\Ycal]$ are $\ell_\Bs(\Bs)$ and $\ell_\As(\As)\cdot \ell_\As(\Bs/\mf_\As \Bs)$, and they are equal by Lemma \ref{lem:lenght}.
\end{proof}
The following result is now clear.
\begin{corol}\label{cor:pullback} The pullback of supercycles is functorial, that is, if $f\colon \Xcal\to \Ycal$, $g\colon \Wc \to \Xcal$ are flat morphisms of superschemes of relative even dimensions $m$ and $m'$, respectively, then
$$
(f\circ g)^\ast = g^\ast\circ f^\ast
$$
as morphisms  of $\Z^2$-modules from $Z_h(\Ycal)$ to $Z_{h+m+m'}(\Wc)$.
\qed
\end{corol}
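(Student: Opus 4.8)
The plan is to reduce the identity to generators and then invoke Proposition \ref{prop:pullback}. Both $(f\circ g)^\ast$ and $g^\ast\circ f^\ast$ are homomorphisms of $\Z^2$-modules $Z_h(\Ycal)\to Z_{h+m+m'}(\Wc)$, and $Z_h(\Ycal)$ is the free $\Z^2$-module on the classes $[Z]$ of subvarieties $Z\subset Y$ of dimension $h$; so it suffices to check $(f\circ g)^\ast[Z]=g^\ast\bigl(f^\ast[Z]\bigr)$ for every such $Z$. First I would note that $f\circ g\colon\Wc\to\Ycal$ is again flat (flatness is stable under composition) of relative even dimension $m+m'$: a fibre of $f\circ g$ over $y\in Y$ maps by a flat morphism of relative even dimension $m'$ onto the fibre $\Xcal_y$ of $f$, which has even dimension $m$, so it has even dimension $m+m'$. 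Hence $(f\circ g)^\ast$ is defined.

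Next, viewing $Z$ as a reduced closed sub-superscheme of $\Ycal$, I would observe that $f^{-1}(Z)\subset\Xcal$ and $(f\circ g)^{-1}(Z)\subset\Wc$ have \emph{pure} even dimensions $h+m$ and $h+m+m'$ respectively: since $Z$ is irreducible and $f$ (resp.\ $f\circ g$) is flat of constant relative even dimension, every irreducible component of the relevant bosonic reduction dominates $Z$ and therefore has even dimension exactly $h+m$ (resp.\ $h+m+m'$). Thus the fundamental supercycles of Definition \ref{def:fundsupercycle} are defined, and Definition \ref{def:pullback} gives $(f\circ g)^\ast[Z]=[(f\circ g)^{-1}(Z)]$ and $f^\ast[Z]=[f^{-1}(Z)]$ (each pushed forward into $\Wc$, resp.\ $\Xcal$). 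Applying Proposition \ref{prop:pullback} to the flat morphism $g$ and the closed sub-superscheme $f^{-1}(Z)\subset\Xcal$ of pure even dimension $h+m$, we obtain
$$
g^\ast\bigl(f^\ast[Z]\bigr)=g^\ast[f^{-1}(Z)]=[g^{-1}(f^{-1}(Z))]\,.
$$

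Finally I would identify $g^{-1}(f^{-1}(Z))$ with $(f\circ g)^{-1}(Z)$ as closed sub-superschemes of $\Wc$ — the underlying fibre products coincide and the scheme structures agree by associativity of base change — whence $[g^{-1}(f^{-1}(Z))]=[(f\circ g)^{-1}(Z)]=(f\circ g)^\ast[Z]$, which is the assertion. I do not expect any real obstacle: the whole content is carried by Proposition \ref{prop:pullback}, and the only step that genuinely needs an argument is the purity of $f^{-1}(Z)$ noted above; stability of flatness under composition, additivity of fibre dimension for composable flat morphisms, and associativity of fibre products are routine.
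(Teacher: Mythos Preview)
Your argument is correct and is precisely the reasoning the paper has in mind: the corollary is stated with an immediate \qed\ after ``The following result is now clear,'' so the authors regard it as a direct consequence of Proposition~\ref{prop:pullback}, exactly via the reduction to generators $[Z]$, the identification $(f\circ g)^{-1}(Z)=g^{-1}(f^{-1}(Z))$, and one application of that proposition to the pure closed sub-superscheme $f^{-1}(Z)$. There is nothing to add.
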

 \subsection{Superdivisors and rational equivalence}
 
Let $\Wc$ be a supervariety  of  even dimension $h+1$. 
For every 1-codimensional subvariety $V$ of $W$, the local superring $\Bs=\Oc_{\Wc,\xi}$ of $\Wc$ at the generic point $\xi$ of $V$, is a superdomain of  even dimension 1. We then have an associated order function (Definition \ref{def:order}).
$$
\ord_V(f)=\ord_\Bs(f)\in\Z^2\,.
$$
\begin{defin}\label{def:div}
Let $g\in \K(\Wc)^\ast$ be an even rational superfunction.
The supercycle $\divi(g)\in Z_h(\Wc)$ associated to $f$ is
$$
\divi(g)=\sum \ord_V(g)[V]\,.
$$
The sum runs over all   1-codimensional subvarieties $V$ of $W$; the formula makes sense because $\ord_V(g)\neq 0$ only for a finite number of $V$'s.
\end{defin}

We now define rational equivalence for supercycles.

\begin{defin}\label{def:ratequiv2} Let $\Xcal$ be a superscheme. A supercycle $\alpha\in Z_h(\Xcal)$ is rationally equivalent to zero if there exist a finite number $i=0,\dots,t$ of sub-supervarieties $\delta_i\colon\Wc_i\hookrightarrow\Xcal$  of even dimension $h+1$ and pure odd dimension  $s=0,1$ (Definition \ref{def:dim}) of $\Xcal$ and nonzero rational even superfunctions $g_i\in \K(\Wc_i)^\ast$ such that
$$
\alpha=\sum_{i=0}^t \delta_{i\ast}\divi(g_i)\,.
$$
\end{defin}

\subsection{Proper push-forward of supercycles}

The push-forward of supercycles is defined in a similar way to the push-forward of cycles \cite{Fu98}.
Let $f\colon \Xcal\to\Ycal$ be a proper morphism of superschemes. For every   subvariety $Z$ of $X$, the scheme-theoretic image $f(Z)$ is a subvariety of $Y$. There is an extension $K(f(Z))\hookrightarrow K(Z)$ between the corresponding fields of algebraic functions, which is finite when $Z$ and $f(Z)$ have  the same dimension. One sets
$$
\deg(Z/f(Z))=\begin{cases} \dim_{K(f(Z))}K(Z) & \text{if $\dim Z=\dim f(Z)$} \\ 0 & \text{otherwise}\,.
\end{cases}
$$ 
\begin{defin}\label{def:push} The push-forward of the supercycle $[Z]$ of $\Xcal$ is the supercycle of $\Ycal$ given by
$$
 f_\ast([Z])= \deg(Z/f(Z)) [f(Z)]\,.
$$
This extends to a morphism of $\Z_2$-graded $\Z^2$-modules
$$
f_\ast \colon Z_h(\Xcal) \to Z_h(\Ycal)\,.
$$
\end{defin}

The push-forward of supercycles is compatible with composition, that is, if $f\colon\Xcal \to \Ycal$, $h\colon \Ycal \to \Zc$ are proper morphisms of superschemes, then 
$$
(h\circ f)_\ast=h_\ast\circ f_\ast
$$
as morphisms from $Z_h(\Xcal)$ to $Z_h(\Zc)$.

Since $\divi (g^{-1})=-\divi(g)$, the $h$-supercycles rationally equivalent to zero form a graded $\Z^2$-submodule $W_h(\Xcal)$ of $Z_h(\Xcal)$.
\begin{defin}\label{def:cyclesgr}
The $\Z^2$-module of $h$-supercycles modulo rational equivalence is the quotient
$$
A_h(\Xcal)= Z_h(\Xcal)/W_h(\Xcal)\,.
$$
\end{defin}

\begin{prop}\label{lem:basechange} Let 
$$
\xymatrix{
\Xcal'\ar[r]^{\phi'}\ar[d]^{f'} & \Xcal \ar[d]^f 
\\
\Ycal'\ar[r]^\phi & \Ycal
}
$$ 
be a cartesian diagram of superscheme morphisms where $f$ is proper and $\phi$ is flat. Then $f'$ is proper, $\phi'$ is flat and for every supercycle $\alpha$ in $\Xcal$ one as
$$
f'_\ast \phi^{'\ast}(\alpha)= \phi^\ast f_\ast(\alpha)\,.
$$
\end{prop}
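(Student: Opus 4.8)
Since properness and flatness are stable under base change (Proposition~\ref{prop:sorite}), $f'$ is proper and $\phi'$ is flat, so both members of the formula are defined; write $d$ for the relative even dimension of $\phi$, which is also that of $\phi'$. As both assignments $\alpha\mapsto f'_\ast\phi^{'\ast}(\alpha)$ and $\alpha\mapsto\phi^\ast f_\ast(\alpha)$ are $\Z^2$-linear, I would first reduce by additivity to the case $\alpha=[Z]$ with $Z\subset X$ a subvariety. Next, using Proposition~\ref{prop:pullback} (which says flat pullback commutes with forming scheme-theoretic preimages) together with the compatibility of proper push-forward with composition, one checks that both operations are compatible with the closed immersions $Z\hookrightarrow\Xcal$ and $f(Z)\hookrightarrow\Ycal$; this lets me replace $\Xcal$ by $Z$ with its reduced structure and $\Ycal$ by $W:=f(Z)$ with its reduced structure. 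So it suffices to treat the case in which $\Xcal=Z$ and $\Ycal=W$ are integral varieties, $f\colon Z\to W$ is proper and surjective, $\phi\colon\Ycal'\to W$ is flat, and $\alpha=[Z]$. After this reduction $\Xcal'$ and $\Ycal'$ are equidimensional of even dimensions $\dim Z+d$ and $\dim W+d$ respectively, and $\phi^{'\ast}[Z]=[\Xcal']$ by Proposition~\ref{prop:pullback}.

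I would then split into two cases according to $\dim Z$ versus $\dim W$. If $\dim Z>\dim W$, then $f_\ast[Z]=0$ by definition of the push-forward, so the right-hand side is $0$; on the left-hand side every component of $\Xcal'$ has even dimension $\dim Z+d$, strictly larger than $\dim W+d=\dim\Ycal'\ge\dim\overline{f'(X')}$ for any component $X'$ of $\Xcal'$, so $f'_\ast[\Xcal']=0$ as well. If $\dim Z=\dim W$, set $n=[K(Z):K(W)]$, so that $f_\ast[Z]=n[W]$ and the right-hand side equals $n\,\phi^\ast[W]=n\,[\Ycal']$; one must show $f'_\ast[\Xcal']=n\,[\Ycal']$. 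Since $\Xcal'$ and $\Ycal'$ now have the same even dimension, both $f'_\ast[\Xcal']$ and $n[\Ycal']$ are supported on the components of $\Ycal'$, so it is enough to compare, for each component $Y'$ of $\Ycal'$, the coefficient of $[Y']$.

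This coefficient comparison is the crux, and is a local computation at the generic point $\eta'$ of $Y'$. Because $\phi$ is flat and $W$ is irreducible, $\eta'$ maps to the generic point $\eta$ of $W$, so $\kappa(\eta)=K(W)$ and $\As:=\Oc_{\Ycal',\eta'}$ is an Artin superring that is a $K(W)$-algebra with residue field $\kappa(\eta')$; hence the coefficient of $[Y']$ in $n[\Ycal']$ is $n\,\ell_\As(\As)$. The fibre-product identity $\Xcal'\times_{\Ycal'}\SSpec\As\simeq\SSpec\big(K(Z)\otimes_{K(W)}\As\big)$ exhibits $K(Z)\otimes_{K(W)}\As$ as a finite $\As$-algebra of rank $n$, hence as a finite product $\prod_a\Bs_a$ of local Artin superrings with $\Bs_a\simeq\Oc_{\Xcal',\xi'_a}$, where the $\xi'_a$ are — because $f'$ is proper, hence closed — exactly the generic points of the components of $\Xcal'$ dominating $Y'$. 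Thus the coefficient of $[Y']$ in $f'_\ast[\Xcal']$ is $\sum_a\ell_{\Bs_a}(\Bs_a)\,[\kappa(\xi'_a):\kappa(\eta')]$. Now each $\As\to\Bs_a$ is flat and local, so Lemma~\ref{lem:lenght} gives $\ell_{\Bs_a}(\Bs_a)=\ell_\As(\As)\cdot\ell_{\Bs_a}(\Bs_a/\mf_\As\Bs_a)$; and since $\prod_a\Bs_a/\mf_\As\Bs_a\simeq K(Z)\otimes_{K(W)}\kappa(\eta')$ is \emph{purely even}, each $\Bs_a/\mf_\As\Bs_a$ is purely even, whence $\ell_{\Bs_a}(\Bs_a/\mf_\As\Bs_a)\,[\kappa(\xi'_a):\kappa(\eta')]=\big(\dim_{\kappa(\eta')}(\Bs_a/\mf_\As\Bs_a),0\big)$. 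Summing over $a$ and using $\sum_a\dim_{\kappa(\eta')}(\Bs_a/\mf_\As\Bs_a)=\dim_{\kappa(\eta')}\big(K(Z)\otimes_{K(W)}\kappa(\eta')\big)=n$, one gets $\sum_a\ell_{\Bs_a}(\Bs_a)\,[\kappa(\xi'_a):\kappa(\eta')]=n\,\ell_\As(\As)$, which matches the coefficient of $[Y']$ in $n[\Ycal']$ and finishes the proof.

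The main obstacle is the local computation of the last paragraph: one has to keep track of the $\Z^2$-valued super-lengths through the flat base change $\As\to\Bs_a$ by means of Lemma~\ref{lem:lenght}, and the decisive point — the reason the super-multiplicities recombine into precisely the classical degree $n$ — is the observation that $\Bs_a/\mf_\As\Bs_a$ is purely even. The reductions in the first paragraph, by contrast, are routine once Proposition~\ref{prop:pullback} and the composition law for push-forward are in hand, although the compatibility of flat pullback and proper push-forward with closed immersions still has to be spelled out carefully.
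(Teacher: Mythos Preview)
Your proof is correct and follows exactly the route the paper indicates: the paper's own proof consists of a single sentence saying that Fulton's argument for \cite[Prop.\,1.7]{Fu98} goes through verbatim once one replaces \cite[Lemma A.1.3]{Fu98} by Lemma~\ref{lem:lenght}, and what you have written is precisely that adaptation spelled out in full. Your identification of the key step --- the purely-even nature of $\Bs_a/\mf_\As\Bs_a\simeq K(Z)\otimes_{K(W)}\kappa(\eta')$ allowing the $\Z^2$-valued lengths to collapse to the classical degree $n$ --- is exactly where Lemma~\ref{lem:lenght} enters.
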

\begin{proof}  The proof of the similar statement for schemes given in \cite[Prop.\,1.7]{Fu98}  can be adapted straightforwardly (using
  Lemma \ref{lem:lenght} in this paper instead of  \cite[Lemma A.1.3]{Fu98}). \end{proof}

\begin{lemma}\label{lem:degree}
Let $f\colon \Xcal \to \Ycal$ be a finite morphism of superschemes. Assume that $f$ is locally free of rank $d=d_+|d_-$, that is, that $f_\ast\Oc_\Xcal$ is a locally free finitely generated $\Z_2$-graded $\Oc_\Ycal$-module of rank $d$. For every supercycle $\alpha$ in $\Ycal$ one has \marginnote{Aggiunto un puntino}
$$
f_\ast f^\ast \alpha= d\cdot\alpha\,.
$$
\end{lemma}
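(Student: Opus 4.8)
The plan is to reduce to a single subvariety and then compute by localizing at its generic point. Since both $f_\ast$ and $f^\ast$ are morphisms of $\Z^2$-modules (Definitions \ref{def:push} and \ref{def:pullback}) and $\alpha\mapsto d\alpha$ is $\Z^2$-linear, it suffices to prove $f_\ast f^\ast[V]=d\,[V]$ for each subvariety $V\subset Y$ of dimension $h$. As $f_\ast\Oc_\Xcal$ is locally free, $f$ is flat, so Proposition \ref{prop:pullback} gives $f^\ast[V]=[\Wc]$ with $\Wc:=f^{-1}(V)$, a closed sub-superscheme of $\Xcal$, and $f_\Wc\colon\Wc\to V$ is again finite and locally free of rank $d=d_+|d_-$. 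I would first check that $\Wc$ has pure even dimension $h$: since $W_{bos}\to V$ is finite and flat over the integral scheme $V$, its structure sheaf is $\Oc_V$-torsion free, hence every irreducible component $W_k$ of $W_{bos}$ dominates $V$ and has dimension $h$. Writing $[\Wc]=\sum_k m_{W_k}(\Wc)\,[W_k]$ as in Definition \ref{def:fundsupercycle}, with $m_{W_k}(\Wc)=\ell(\Oc_{\Wc,w_k})$ for $w_k$ the generic point of $W_k$, and pushing forward by Definition \ref{def:push} using $\deg(W_k/V)=\dim_{K(V)}K(W_k)=[\kappa(w_k):\kappa(\xi)]$ (with $\xi$ the generic point of $V$, so $\kappa(\xi)=K(V)$), I obtain
\[
f_\ast f^\ast[V]=\Big(\sum_k [\kappa(w_k):\kappa(\xi)]\cdot m_{W_k}(\Wc)\Big)\,[V]\,,
\]
the coefficient being a $\Z^2$-scalar (an integer $n$ acting as $(n,0)$). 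So the whole statement comes down to the identity $\sum_k [\kappa(w_k):\kappa(\xi)]\cdot m_{W_k}(\Wc)=(d_+,d_-)$ in $\Z^2$.

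To prove this identity I would localize $\Ycal$ at $\xi$: put $\As=\Oc_{\Ycal,\xi}$ and $\Bs=(f_\ast\Oc_\Xcal)_\xi$, a free graded $\As$-module of rank $d_+|d_-$. On one hand $\Bs/\mf_\As\Bs$ is free of rank $d_+|d_-$ over the residue field $\kappa(\xi)$, so $\ell_{\kappa(\xi)}(\Bs/\mf_\As\Bs)=(d_+,d_-)$. On the other hand $\Bs/\mf_\As\Bs$ is the superring of global functions of the Artinian fibre $\Xcal\times_\Ycal\Spec\kappa(\xi)$, whose points are exactly the $w_k$ by finiteness of $f$; hence $\Bs/\mf_\As\Bs\simeq\prod_k\Oc_{\Wc,w_k}$ and therefore $\ell_{\kappa(\xi)}(\Bs/\mf_\As\Bs)=\sum_k\ell_{\kappa(\xi)}(\Oc_{\Wc,w_k})$. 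It then remains to verify the ``length in a tower'' formula
\[
\ell_{\kappa(\xi)}(\Oc_{\Wc,w_k})=[\kappa(w_k):\kappa(\xi)]\cdot\ell_{\Oc_{\Wc,w_k}}(\Oc_{\Wc,w_k})=[\kappa(w_k):\kappa(\xi)]\cdot m_{W_k}(\Wc)\,,
\]
which I would get by choosing a composition series of the local Artinian superring $\Oc_{\Wc,w_k}$ over itself — its simple graded factors are $\kappa(w_k)$ or $\Pi\kappa(w_k)$ — and computing $\kappa(\xi)$-dimensions parity by parity; this is the super analogue of the classical fact, close in spirit to Lemma \ref{lem:lenght}. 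Comparing the three displays gives the desired identity, and by linearity $f_\ast f^\ast\alpha=d\,\alpha$ in general.

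The geometric ingredients here (the reduction to a single $[V]$, flatness forcing $\Wc$ to be of pure even dimension $h$, the identification of the fibre over $\xi$ with $\prod_k\Oc_{\Wc,w_k}$) are routine adaptations of the classical proof of the projection formula $f_\ast f^\ast=\deg$ (cf.\ \cite{Fu98}). The part that needs the most care — and which I regard as the main obstacle — is the $\Z_2$-graded bookkeeping throughout: checking that the numerical degrees $\deg(W_k/V)\in\Z$ act correctly on the $\Z^2$-valued supermultiplicities, that $\ell_{\kappa(\xi)}$ of a free graded $\kappa(\xi)$-module of rank $d_+|d_-$ is genuinely $(d_+,d_-)$, and that the composition-series computation for $\Oc_{\Wc,w_k}$ keeps track of parities correctly, so that the even and odd components of the final $\Z^2$-identity come out right.
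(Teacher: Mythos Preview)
Your proposal is correct and follows essentially the same approach as the paper: reduce to $\alpha=[V]$ for a subvariety $V\subset Y$, localize at its generic point $\xi$, decompose the Artinian fibre $\Bs/\mf_\As\Bs$ according to the points $w_k$ above $\xi$, and compare the $\kappa(\xi)$-length $(d_+,d_-)$ with $\sum_k [\kappa(w_k):\kappa(\xi)]\cdot m_{W_k}(\Wc)$. The paper phrases the decomposition via a primary decomposition $\mf_\As\Bs=\prod_i\mf_{x_i}^{a_i}$ rather than the product $\prod_k\Oc_{\Wc,w_k}$, and packages your ``length in a tower'' step into the displayed identity $d=\sum_i\ell_\Bs(\Bs/\mf_{x_i}^{a_i})\cdot\dim_{\kappa(\xi)}\kappa(x_i)$, but the substance is identical.
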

\begin{proof} For every point $\xi\in Y$, if $x_1,\dots,x_s$ are the points of $f^{-1}(\xi)$, the localized superring $\Bs=\Oc_{\Xcal,\xi}$ is a free module of rank $d$ over the local superring $\As=\Oc_{\Ycal,\xi}$. Then $\Bs/\mf_\As\Bs$ is a super vector space of rank $d$ over $\kappa(\xi)=\As/\mf_\As$.
One has a primary decomposition $\mf_\As\Bs=\mf_{x_i}^{a_1}\cdot\dots\cdot\mf_{x_s}^{a_s}$, and then $\Bs/\mf_\As\Bs\simeq \bigoplus_i \Bs/\mf_{x_i}^{a_i}$ so that
\begin{equation}\label{eq:degree}
d= \sum_i \ell_\Bs(\Bs/\mf_{x_i}^{a_i}) \cdot \dim_{\kappa(\xi)}\kappa(x_i)\,.
\end{equation}
Now we proceed to the proof. One can assume that $\alpha=[Z]$ for a subvariety  $Z$ of $Y$. If $\xi$ is the generic point of $Z$, the points $x_i$  in the fibre over it  are the generic points of the irreducible components of $f^{-1}(Z)$, and then $[f^{-1}(Z)]=\sum_i \ell (\Oc_{f^{-1}(Z),x_i}) [Z_i]= \sum_i \ell_\Bs(\Bs/\mf_{x_i}^{a_i}) [Z_i]$. On the other hand, $f_\ast([Z_i])= \dim_{K(Z)}K(Z_i)\cdot [Z]= \dim_{\kappa(\xi)}\kappa(x_i) \cdot [Z]$. The result follows from Equation \eqref{eq:degree}.
\end{proof}

\subsubsection{ Divisors, zero loci  and loci of poles}

Let $f\colon \Xcal \to \Sc$ be a (locally of finite type) morphism of superschemes where $\Xcal$ is a supervariety, and let $g\in \K(\Xcal)^\ast$ be an even rational superfunction. We   prove that the divisor of $g$ is the difference between the supercycles of its locus of zeroes and   its locus of poles. There is an open sub-superscheme $\U$ of $\Xcal$ such that $g$ is an even section of $\Oc_\Xcal^\ast$ on $U$, and then $g$ induces a superscheme morphism $g\colon \U\to \Ps_{\Sc}^1=\Ps_{\Sc}^{1|0}$.
Let $\Ycal$ be the superscheme-theoretic image of the graph of $(1, g)\colon \U\to \Xcal\times_\Sc\Ps_{\Sc}^1$. The projection $p\colon \Ycal\to\Xcal$ is proper and induces an isomorphism $p^{-1}(\U)\iso \U$. We have a commutative diagram
\begin{equation}\label{eq:ratfunc}
\xymatrix{
\Ycal \ar[r]^p\ar[d]^q & \Xcal \ar[d]^f \\
\Ps_{\Sc}^1\ar[r]^\pi & \Sc
}
\end{equation}
where $\pi\colon \Ps_{\Sc}^1\to\Sc$ is the natural projection. \marginnote{Aggiunti due 1}
If we write $\Ps_\Sc^1=\SProj \Oc_\Sc[t_0,t_1]$ and denote by $\Sc_0$, $\Sc_\infty$ the homogeneous zeros of $t_1$ and $t_0$, respectively, proceeding as in \cite[Lemma 42.18.2]{Stacks} one has:
\begin{lemma} \label{lem:ratfunct}
\begin{enumerate} 
\item If $q\colon \Ycal \to \Ps_\Sc^1$ is the morphism induced by the projection onto the second factor, the pullbacks $\Ycal_0=q^{-1}\Sc_0$, $\Ycal_\infty=q^{-1}\Sc_\infty$ are Cartier positive superdivisors (i.e., closed sub-superschemes whose ideals are line bundles of rank $1|0$).
\item If we regard $g$ as a rational superfunction on $\Ycal$, its divisor in $\Ycal$ is 
$$
\divi_\Ycal(g)=j_{0\ast}[\Ycal_0]-j_{\infty\ast}[\Ycal_\infty]\,,
$$ 
 where the supercycles of $[\Ycal_0]$, $[\Ycal_\infty]$ are as in Definition \ref{def:fundsupercycle} and $j_0\colon \Ycal_0\hookrightarrow \Ycal$ and $j_\infty\colon \Ycal_\infty\hookrightarrow\Ycal$ are the immersions. \marginnote{defined $\to$ as}
\item If $\Xcal_0=p(\Ycal_0)$, $\Xcal_\infty=p(\Ycal_\infty)$ (superscheme-theoretic images), the divisor of $g\in \K(\Xcal)^\ast$  in $\Xcal$ is
$$
\divi_\Xcal(g)=j_{0\ast}[\Xcal_0]-j_{\infty\ast}[\Xcal_\infty]\,,
$$
where we denote also by $j_0$ and $j_\infty$ the corresponding immersions.
\end{enumerate}
\qed
\end{lemma}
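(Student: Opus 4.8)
The plan is to follow the classical argument of \cite[Lemma 42.18.2]{Stacks}, feeding in the supergeometric machinery assembled above: the order function (Definition \ref{def:order}), the $\Z^2$-valued super length, proper push-forward of supercycles, and Zariski's Main Theorem (Proposition \ref{prop:zariski}). Throughout I use that $p\colon\Ycal\to\Xcal$ is proper and restricts to an isomorphism over $\U$, hence is birational, and that $g=q^\ast w$ with $w$ the rational function $t_1/t_0$ on $\Ps^1_\Sc$, which vanishes along $\Sc_0$ and has a pole along $\Sc_\infty$.

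For part (1), note that $\Sc_0$ and $\Sc_\infty$ are positive Cartier superdivisors on $\Ps^1_\Sc=\Ps^{1|0}_\Sc$, their ideal sheaves $\Ic_{\Sc_0}$, $\Ic_{\Sc_\infty}$ being line bundles of rank $1|0$. Pulling back by $q$, the natural maps $q^\ast\Ic_{\Sc_0}\to\Oc_\Ycal$ and $q^\ast\Ic_{\Sc_\infty}\to\Oc_\Ycal$ have images $\Ic_{\Ycal_0}$ and $\Ic_{\Ycal_\infty}$. Since $\Ycal$ is a supervariety — it is the superscheme-theoretic image of the integral superscheme $\U$ (identified with the graph of $(1,g)$) — it suffices to check these maps are nonzero, and this holds because $q$ sends the generic point of $\Ycal$, which lies in $p^{-1}(\U)\cong\U$, into the open $\Ps^1_\Sc\setminus(\Sc_0\cup\Sc_\infty)$, as $g$ is a section of $\Oc_\Xcal^\ast$ over $\U$. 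Hence $\Ic_{\Ycal_0}\simeq q^\ast\Ic_{\Sc_0}$ and $\Ic_{\Ycal_\infty}\simeq q^\ast\Ic_{\Sc_\infty}$ are line bundles of rank $1|0$, so $\Ycal_0$ and $\Ycal_\infty$ are positive Cartier superdivisors, disjoint because $\Sc_0\cap\Sc_\infty=\emptyset$.

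For part (2), let $V\subset Y$ be a codimension-$1$ subvariety with generic point $\xi$ and $\Bs=\Oc_{\Ycal,\xi}$, a superdomain of even dimension $1$. Exactly one of three cases occurs. If $q(\xi)\in\Sc_0$, then in a chart where $w$ is a local equation for $\Sc_0$ we get $\Ic_{\Ycal_0,\xi}=(g)$ with $g\in\Bs$ even and non-nilpotent, so $\ord_V(g)=\ell_\Bs(\Bs/(g))=\ell(\Oc_{\Ycal_0,\xi})=m_V(\Ycal_0)$, while $V$ does not occur in $[\Ycal_\infty]$. If $q(\xi)\in\Sc_\infty$, then symmetrically $\Ic_{\Ycal_\infty,\xi}=(g^{-1})$, so $\ord_V(g^{-1})=m_V(\Ycal_\infty)$ and, by \eqref{eq:order}, $\ord_V(g)=-m_V(\Ycal_\infty)$. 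Otherwise $w$, hence $g$, is a unit of $\Bs$, so $\ord_V(g)=0$ and $V$ occurs in neither $[\Ycal_0]$ nor $[\Ycal_\infty]$. Summing over all $V$ gives $\divi_\Ycal(g)=\sum_V\ord_V(g)[V]=j_{0\ast}[\Ycal_0]-j_{\infty\ast}[\Ycal_\infty]$.

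For part (3), apply $p_\ast$ to the identity of (2): $p_\ast\divi_\Ycal(g)=p_\ast j_{0\ast}[\Ycal_0]-p_\ast j_{\infty\ast}[\Ycal_\infty]$. On the left, $p_\ast\divi_\Ycal(g)=\divi_\Xcal(g)$; this is the super analogue of the compatibility of principal divisors with proper push-forward, proved as in the classical case by reducing to a finite morphism via Stein factorization and Zariski's Main Theorem (Proposition \ref{prop:zariski}) and then invoking Proposition \ref{prop:preparation} together with Lemma \ref{lem:distandord}, the degree coefficient being $1$ since $p$ is birational. On the right, $p_\ast[\Ycal_0]$ and $p_\ast[\Ycal_\infty]$ are supported on the superscheme-theoretic images $\Xcal_0=p(\Ycal_0)$ and $\Xcal_\infty=p(\Ycal_\infty)$, and computing their $\Z^2$-valued multiplicities by the local argument of (2) at the generic points of the components of $\Xcal_0$, $\Xcal_\infty$ (using birationality of $p$, so that orders up- and downstairs coincide there) identifies them with the fundamental supercycles $j_{0\ast}[\Xcal_0]$, $j_{\infty\ast}[\Xcal_\infty]$ of Definition \ref{def:fundsupercycle}. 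Combining both sides gives (3). I expect the only genuinely non-formal point to be the compatibility of principal divisors with proper push-forward used on the left-hand side; but this has effectively been prepared by Proposition \ref{prop:preparation} and Lemma \ref{lem:distandord}, so what remains is the reduction to the finite case and the $\Z^2$-multiplicity bookkeeping, parts (1) and (2) being routine once the order/length formalism of this section is in hand.
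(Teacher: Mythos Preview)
The paper offers no proof beyond the reference to \cite[Lemma 42.18.2]{Stacks}, and your proposal follows exactly that route, so the approaches coincide; your arguments for (1) and (2) are correct adaptations of the classical proof to the super length/order formalism.

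For (3) there is one point to tighten. Your left-hand identity $p_\ast\divi_\Ycal(g)=\divi_\Xcal(g)$ is fine and non-circular as you present it: you invoke Proposition~\ref{prop:preparation} directly (with $\K(\Ycal)=\K(\Xcal)$ free of rank $1|0$ and $\ber(g)=g$), not the later Lemma~\ref{lem:pushdiv} or Proposition~\ref{prop:push}, which do depend on the present lemma. But the right-hand identification $p_\ast[\Ycal_0]=[\Xcal_0]$ (and its $\infty$ analogue) is not automatic: proper push-forward of a fundamental supercycle need not equal the fundamental supercycle of the superscheme-theoretic image in general. You must check, at each generic point $\xi$ of a component of $\Xcal_0$, that the $\Z^2$-length $\ell(\Oc_{\Xcal_0,\xi})$ agrees with the coefficient of $[\overline{\{\xi\}}]$ in $p_\ast[\Ycal_0]$; this is the actual content of the ``$\Z^2$-multiplicity bookkeeping'' you defer, and it requires using that the finite part of the Stein factorization of $p$ is birational together with the local description of the scheme-theoretic image. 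Once that is written out, the argument is complete.
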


\subsubsection{Compatibility between rational equivalence and push-forward}

Our next task is to prove that the push-forward of supercycles  is compatible with rational equivalence. The proof is inspired by the one given in \cite[Lemma 42.20.3]{Stacks} for ordinary cycles. Again, we need some preliminary results. 

\begin{lemma}\label{lem:funcdiagram} Let $\Wc$ be a supervariety of even dimension 1 and pure odd dimension 1 and $f\colon\Wc \to \SSpec \K$  a proper morphism where $\K$ is a superfield of odd dimension 0 or 1. If $g\in\K(\Wc)^\ast$ is an even rational superfunction, then $f_\ast[\divi(g)]=0$. \marginnote{corretto g in f}
\end{lemma}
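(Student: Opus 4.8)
The plan is to reduce, by means of Lemma \ref{lem:ratfunct}, to the case where $g$ extends to a finite morphism onto $\Ps^1_\K$, and then to count degrees over the two sections $\Sc_0,\Sc_\infty$. Here $f$ denotes the given structure morphism $\Wc\to\SSpec\K$, which is automatically of finite type, and $\divi(g)\in Z_0(\Wc)$ since $\Wc$ has even dimension $1$; what must be shown is that $f_\ast\divi(g)=0$ in $Z_0(\SSpec\K)\cong\Z^2$. If $g\in\K^\ast\subset\K(\Wc)^\ast$ is constant then $g$ is a unit in every $\Oc_{\Wc,x}$, so $\ord_x(g)=0$ for all $x$ and $\divi(g)=0$; we may thus assume $g$ is non-constant, so that the induced morphism $g\colon\U\to\Ps^1_\K$ on the open locus $\U\subset\Wc$ where $g$ is invertible is dominant. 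Applying Lemma \ref{lem:ratfunct} to $f$ (with $\Sc=\SSpec\K$) we obtain the commutative square \eqref{eq:ratfunc}: a proper morphism $p\colon\Ycal\to\Wc$ restricting to an isomorphism over $\U$ (so $\Ycal$ is a supervariety of even dimension $1$, birational to $\Wc$), a morphism $q\colon\Ycal\to\Ps^1_\K$, the projection $\pi\colon\Ps^1_\K\to\SSpec\K$ with $\pi\circ q=f\circ p$, Cartier positive superdivisors $\Ycal_0=q^{-1}\Sc_0$ and $\Ycal_\infty=q^{-1}\Sc_\infty$, and the identity $\divi_\Ycal(g)=j_{0\ast}[\Ycal_0]-j_{\infty\ast}[\Ycal_\infty]$.

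The first step is to check that $p_\ast\divi_\Ycal(g)=\divi_\Wc(g)$, so that $f_\ast\divi_\Wc(g)=(f\circ p)_\ast\divi_\Ycal(g)=\pi_\ast q_\ast\divi_\Ycal(g)$. Over $\U$ this is clear. The morphism $p$ is finite: it is proper, and its bosonic fibres are finite because $\Ycal$ has even dimension $1$ and dominates $\Wc$, so no fibre of $p$ can contain a whole $\Ps^1$; hence $p$ is finite by Zariski's Main Theorem (Proposition \ref{prop:zariski}). For a point $x\in\Wc\setminus\U$ with $p^{-1}(x)=\{y_1,\dots,y_r\}$, the coefficient of $[x]$ in $p_\ast\divi_\Ycal(g)$ is $\sum_i\dim_{\kappa(x)}\kappa(y_i)\,\ord_{y_i}(g)$ and in $\divi_\Wc(g)$ it is $\ord_x(g)$; these agree by Proposition \ref{prop:preparation} applied to the finite extension $\Oc_{\Wc,x}\hookrightarrow(p_\ast\Oc_\Ycal)_x$, because $p$ is birational, so that $\K((p_\ast\Oc_\Ycal)_x)=\K(\Wc)$ is free of rank $1|0$ over $\K(\Oc_{\Wc,x})=\K(\Wc)$ and the berezinian occurring in that proposition is just $g$ itself.

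It remains to compute $\pi_\ast q_\ast\divi_\Ycal(g)$. The morphism $q$ is proper by Proposition \ref{prop:sorite}(5) (the composition $\pi\circ q=f\circ p$ of proper morphisms is proper by Proposition \ref{prop:sorite}(4), and $\pi$ is separated) and has finite bosonic fibres, since $q_{bos}\colon Y\to\Ps^1_K$ is a non-constant morphism of the proper integral curve $Y$, hence finite; so $q$ is finite by Proposition \ref{prop:zariski}. Granting that $q$ is moreover flat (whence, being finite over the connected superscheme $\Ps^1_\K$, it is locally free of some rank $d=d_+|d_-$), Proposition \ref{prop:pullback} gives $q^\ast[\Sc_0]=[\Ycal_0]$ and $q^\ast[\Sc_\infty]=[\Ycal_\infty]$, so $\divi_\Ycal(g)=q^\ast([\Sc_0]-[\Sc_\infty])$; Lemma \ref{lem:degree} then yields $q_\ast\divi_\Ycal(g)=d\,([\Sc_0]-[\Sc_\infty])$. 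Since $\pi$ restricts to an isomorphism on each of the sections $\Sc_0,\Sc_\infty$ we have $\pi_\ast[\Sc_0]=\pi_\ast[\Sc_\infty]$, and therefore $f_\ast\divi_\Wc(g)=\pi_\ast q_\ast\divi_\Ycal(g)=d\,(\pi_\ast[\Sc_0]-\pi_\ast[\Sc_\infty])=0$.

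The point that will require the most care is the flatness (equivalently finite local freeness) of $q\colon\Ycal\to\Ps^1_\K$, which is what makes Lemma \ref{lem:degree} applicable: in the classical setting this is miracle flatness, $q_{bos}$ being a finite morphism from the Cohen-Macaulay curve $Y$ to the regular scheme $\Ps^1_K$, and I expect transporting it to superschemes to demand a careful analysis of the square-zero extension $0\to\Jc_\Ycal\to\Oc_\Ycal\to\Oc_Y\to0$ relative to the split one presenting $\Ps^1_\K$ over $\Ps^1_K$, using that $\Ps^1_\K$ is smooth of relative dimension $1|0$ over $\K$ (Proposition \ref{prop:smoothsplit}). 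One can instead sidestep flatness of $q$ altogether: it would suffice to prove directly that $\deg_\K\Ycal_0=\deg_\K\Ycal_\infty$ in $\Z^2$, and since $\Ycal_0$ and $\Ycal_\infty$ are Cartier superdivisors with $\Oc_\Ycal(\Ycal_0)\simeq\Oc_\Ycal(\Ycal_\infty)$ (their difference being $\divi_\Ycal(g)$), this follows by comparing the super Euler characteristics $\chi(\Oc_{\Ycal_0})$ and $\chi(\Oc_{\Ycal_\infty})$ via the exact sequences $0\to\Oc_\Ycal(-\Ycal_0)\to\Oc_\Ycal\to\Oc_{\Ycal_0}\to0$ and its analogue for $\Ycal_\infty$, the relevant cohomology groups being finite-dimensional by Proposition \ref{prop:finiteness}.
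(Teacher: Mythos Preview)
Your overall architecture --- pass to the graph closure $\Ycal$, identify $\divi_\Ycal(g)=[\Ycal_0]-[\Ycal_\infty]$, push this down through $q$ and then $\pi$ --- is the same as the paper's.  The divergence is at the flatness step, and there your main line has a genuine gap.

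You try to show that $q\colon\Ycal\to\Ps^1_\K$ itself is flat.  This fails in general.  Take $\K=K[\theta]$ and let the structure map $\K\to\Oc_\Wc$ send $\theta$ to $0$ (nothing in the hypotheses forces $f$ to be flat over $\K$).  Then for any closed point $y\in Y$ the stalk $\Oc_{\Ycal,y}$ is a module over $\Oc_{\Ps^1_\K,q(y)}\cong A[\theta]/(\theta^2)$ on which $\theta$ acts by zero; such a nonzero module is never flat, since flatness would require $\ker(\theta)=\theta\cdot(\text{module})$.  So ``miracle flatness'' does not transport to $\Ps^1_\K$, and your Lemma~\ref{lem:degree} argument does not apply to $q$.

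The paper sidesteps this by observing that, since $g$ is even and $\Ycal$ has pure odd dimension $1$, one has $g\in K(Y)$, so there is a map $\tilde q\colon\Ycal\to\Ps^1_K$ to the \emph{bosonic} projective line.  For $\tilde q$ the flatness argument is clean: $\tilde q$ is finite, $\tilde q_\ast\Oc_\Ycal=\tilde q_\ast\Oc_Y\oplus\tilde q_\ast\Jc_\Ycal$ is a sum of torsion-free sheaves on the smooth curve $\Ps^1_K$, hence locally free.  The paper then uses that $\Ps^1_\K$ is projected (here the hypothesis that $\K$ has odd dimension $\le 1$ enters) to relate $\Sc_0,\Sc_\infty$ to the bosonic points $s_0,s_\infty$ and runs the Lemma~\ref{lem:degree} argument for $\tilde q$.

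Your alternative route (b), comparing $\chi_K(\Oc_{\Ycal_0})$ and $\chi_K(\Oc_{\Ycal_\infty})$ via the exact sequences $0\to\Oc_\Ycal(-\Ycal_\bullet)\to\Oc_\Ycal\to\Oc_{\Ycal_\bullet}\to 0$ and the isomorphism $\Oc_\Ycal(-\Ycal_0)\simeq q^\ast\Oc_{\Ps^1_\K}(-1)\simeq\Oc_\Ycal(-\Ycal_\infty)$, is correct and genuinely different from the paper's proof.  It avoids flatness entirely and is arguably more direct; to complete it you should note that for a zero-dimensional proper superscheme over $K$ the graded $K$-dimension of $H^0$ coincides with the coefficient appearing in the push-forward to $\SSpec\K$, so that $\chi_K(\Oc_{\Ycal_0})=\chi_K(\Oc_{\Ycal_\infty})$ really is the equality $(f\circ p)_\ast j_{0\ast}[\Ycal_0]=(f\circ p)_\ast j_{\infty\ast}[\Ycal_\infty]$ you need.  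Finiteness of the relevant cohomology follows from Proposition~\ref{prop:finiteness2} (properness) rather than Proposition~\ref{prop:finiteness} (projectivity).
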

\begin{proof} We have a commutative diagram
$$
\xymatrix{
\Ycal \ar[r]^p\ar[d]^q & \Xcal \ar[d]^f \\
\Ps_{\K}^1\ar[r]^(.4)\pi & \SSpec \K
} 
$$
(see Equation \eqref{eq:ratfunc}). Since $p$ is an isomorphism on an open sub-superscheme of $\Ycal$, the latter has  even dimension    1 and  pure odd dimension 1, that is, $\Oc_\Ycal=\Oc_Y\oplus \Jc_\Ycal$ where $\Jc$ is a $\Oc_Y$-module of generic rank 1. Moreover, we can choose the open $U$ involved in  the construction of the diagram Equation \eqref{eq:ratfunc} such that $\Jc$ is actually a torsion-free rank 1 $\Oc_Y$-module. Moreover $\K(\Ycal)=K(Y)\oplus \Jc_\xi$, $\xi$ being the generic point of $Y$. Then, $g\in K(Y)$ because it is even. This implies that $q\colon \Ycal \to \Ps_{\K}^1$ factors as the composition of a proper morphism $\tilde q\colon \Ycal\to \Ps_K^1$ and the closed immersion $i\colon \Ps_K^1\hookrightarrow \Ps_\K^1$ of the ordinary projective line over $K$ into $\Ps_\K^1$. Here $K$ is the bosonic reduction of $\K$.

By Lemma \ref{lem:ratfunct}, $\divi_\Xcal(g)=p_\ast\divi_\Ycal(g)$, and then we have to prove that $0=f_\ast(p_\ast\divi_\Ycal(g))=\pi_\ast(q_\ast\divi_\Ycal (g))$.
If $q(Y)$ is a closed point of $\Ps_\K$, then $q_\ast\divi_\Ycal(g)=0$ and the statement follows. Otherwise, $q$ is dominant. 
Let us see that $\tilde q$ is flat in this case. Since it is proper with finite fibres, $\tilde q$ is finite by Zariski's Main Theorem (Proposition \ref{prop:zariski}). Then, $\tilde q_\ast\Oc_\Ycal=\tilde q_\ast \Oc_Y\oplus \tilde q_\ast\Jc_\Ycal$ is the sum of two torsion-free modules over $\Oc_{\Ps_K}$. Since $\Ps_K^1$  is smooth, both $\tilde q_\ast \Oc_Y$ and $\tilde q_\ast\Jc_\Ycal$ are flat (actually, locally free).

Let us denote by $s_0$ and $s_\infty$ the zeros of $t_1$ and $t_0$ respectively in $\Ps_K^1=\Proj K[t_0,t_1]$. Since $\K$ has odd dimension 0 or 1, one has $\K=K$ or $\K=K[\theta]$; in either case $\Ps_\K^1$ is projected, that is, there is a projection $\rho\colon \Ps_\K^1\to \Ps_K^1$ such that $\rho\circ i=\Id$. Moreover, in the notation of Lemma \ref{lem:ratfunct}, one has $\Sc_0=\rho^{-1}(s_0)$ and $\Sc_\infty=\rho^{-1}(s_\infty)$.

Taking into account this property and Lemma \ref{lem:ratfunct}, one has
$\divi_\Ycal(g)=j_{0\ast}[\tilde q^{-1}(s_0)]-j_{\infty\ast}[\tilde q^{-1}(s_\infty)]$.  Since $\tilde q$ is flat, we can apply Proposition \ref{prop:pullback} to obtain $\divi_\Ycal(g)=\tilde q^\ast([s_0]-[s_\infty])$. Then $\tilde q_\ast \divi_\Ycal(g)=d \cdot ([s_0]-[s_\infty])$ for some $d\in\Z^2$ by Lemma \ref{lem:degree}. Now, $q_\ast \divi_\Ycal(g)=d\cdot i_\ast([s_0]-[s_\infty])$ and we finish because $\pi_\ast i_\ast[s_0]=i_\ast \pi_{\bos\ast}[s_0]=i_\ast \pi_{\bos\ast}[s_\infty]=\pi_\ast i_\ast [s_\infty]$.
\end{proof}

\begin{lemma}\label{lem:pushdiv} Let $f\colon\Xcal \to \Ycal$ be a proper and surjective morphism of supervarieties of the same even dimension.  
\marginnote{Corretto l'errore del morfismo finito (enunciato e dimostrazione)}
Assume moreover that $\Xcal$ has pure odd dimension 1 and that $\Oc_\Ycal \to f_\ast\Oc_\Xcal$ is injective. Then:
\begin{enumerate}
\item $\K(\Xcal)$ is a free $\K(\Ycal)$-module.
\item For every rational even superfunction $g\in \K(\Xcal)^\ast$ one has 
$$
f_\ast(\divi(g))=\divi(\ber(g))\,,
$$
where $\ber(g)\in \K(\Ycal)$ is the berezinian of the multiplication by $g$ in the free $\K(\Ycal)$-module $\K(\Xcal)$ (see \ref{sss:lattices}).
\end{enumerate}
\end{lemma}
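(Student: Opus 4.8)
The plan is to dispatch the three assertions — that $f$ is finite, part~(1), and part~(2) — in that order, with (1) supplying exactly the freeness needed to invoke Proposition~\ref{prop:preparation}, which then gives (2) by comparing, one prime divisor of $Y$ at a time, the coefficients of $f_\ast(\divi(g))$ and of $\divi(\ber(g))$. Finiteness is quickest: the bosonic morphism $f_{\bos}\colon X\to Y$ is proper and surjective between irreducible varieties of the same dimension, so its fibres are finite; hence $f$ has finite bosonic fibres and is finite by Zariski's Main Theorem (Proposition~\ref{prop:zariski}). In particular $f_\ast\Oc_\Xcal$ is a coherent sheaf of $\Oc_\Ycal$-superalgebras whose stalk at the generic point $\eta$ of $Y$ is $\K(\Xcal)=\Oc_{\Xcal,\xi}$, where $\xi$ is the generic point of $X$ (it lies over $\eta$ since $f$ is dominant).

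For part~(1) I would pass to the generic point and put $\As=\K(\Ycal)$, $\Bs=\K(\Xcal)$, a finite $\As$-module. The bosonic reduction $K(X)/K(Y)$ is a finite field extension, say of degree $d$; fix a $K(Y)$-basis $\omega_1,\dots,\omega_d$ of $K(X)$. Since $\Xcal$ has pure odd dimension~$1$, one has $\K(\Xcal)=K(X)\oplus K(X)\theta$ for a single odd element $\theta$ with $\theta^2=0$; and the injectivity of $\Oc_\Ycal\to f_\ast\Oc_\Xcal$ embeds $\K(\Ycal)$ into $\K(\Xcal)$, so the odd ideal of $\K(\Ycal)$ lands in $K(X)\theta$ and hence squares to zero. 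As $\Ycal$ too has generic odd rank at most~$1$ in the situations where the lemma is applied, $\K(\Ycal)$ is either $K(Y)$ or $K(Y)\oplus K(Y)\zeta$ with $\zeta\mapsto c\theta$, $c\in K(X)^\ast$ (nonzero by injectivity). In the first case $\{\omega_1,\dots,\omega_d,\omega_1\theta,\dots,\omega_d\theta\}$ is a homogeneous $\K(\Ycal)$-basis of $\K(\Xcal)$, so $\K(\Xcal)$ is free of rank $d|d$; in the second, $\bigoplus_i\K(\Ycal)\,\omega_i=K(X)\oplus K(X)(c\theta)=\K(\Xcal)$ exhibits $\K(\Xcal)$ as free of rank $d|0$. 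Either way (1) holds, and $\ber(g)\in\K(\Ycal)^\ast$ is well defined for every even $g\in\K(\Xcal)^\ast$.

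For part~(2), both $f_\ast(\divi(g))$ and $\divi(\ber(g))$ lie in $Z_{n-1}(\Ycal)$, with $n$ the common even dimension, and are supported on the $1$-codimensional subvarieties of $Y$; so it suffices to match the coefficient of $[D]$ for each such $D$. Fixing $D$ with generic point $y$ and localising there, $\As'=\Oc_{\Ycal,y}$ is a local superdomain of even dimension~$1$ with residue field $K(D)$, and $\Bs'=(f_\ast\Oc_\Xcal)_y$ is a superdomain, finite over $\As'$, whose maximal ideals are the generic points $x_1,\dots,x_s$ of the $1$-codimensional subvarieties $V_1,\dots,V_s$ of $X$ lying over $D$ — equivalently the points of the finite set $f^{-1}(y)$, each closure $\overline{\{x_i\}}$ dominating $D$ because $f$ is finite; moreover $\K(\Bs')=\K(\Xcal)$ is free over $\K(\As')=\K(\Ycal)$ by~(1). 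Proposition~\ref{prop:preparation} then gives
$$
\ord_y(\ber(g))=\sum_{i=1}^s \dim_{K(D)}K(V_i)\cdot\ord_{V_i}(g)=\sum_{i=1}^s \deg(V_i/D)\,\ord_{V_i}(g),
$$
whose left-hand side is the coefficient of $[D]$ in $\divi(\ber(g))$ (Definition~\ref{def:div}) and whose right-hand side is the coefficient of $[D]$ in $f_\ast(\divi(g))$ (Definition~\ref{def:push}) — the remaining prime divisors $V$ of $X$, those with $\dim f(V)<\dim V$, contribute $0$ to the push-forward and do not dominate $D$. Hence $f_\ast(\divi(g))=\divi(\ber(g))$.

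The main obstacle is part~(1): one must produce genuine \emph{freeness}, not merely finiteness, of $\K(\Xcal)$ over $\K(\Ycal)$, since that is the precise input of Proposition~\ref{prop:preparation}; here the hypothesis that $\Xcal$ has pure odd dimension~$1$, combined with the constraint that injectivity of $\Oc_\Ycal\to f_\ast\Oc_\Xcal$ places on the odd part of $\K(\Ycal)$, is what makes freeness hold — and one should bear in mind that without a bound on the generic odd rank of $\Ycal$ the module $\K(\Xcal)$ need not be free at all. Once (1) is established, part~(2) is bookkeeping: it reduces coefficient-by-coefficient to the already-proved local identity of Proposition~\ref{prop:preparation}, the only point requiring care being that the berezinian of multiplication by $g$ formed over $\K(\Ycal)$ coincides with the one used there, which is immediate because $\K\big((f_\ast\Oc_\Xcal)_y\big)=\K(\Xcal)$ and $\K(\Oc_{\Ycal,y})=\K(\Ycal)$.
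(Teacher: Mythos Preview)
Your proof is correct and follows essentially the same strategy as the paper's: finiteness via Zariski's Main Theorem, freeness of $\K(\Xcal)$ over $\K(\Ycal)$ by splitting into the two cases $\K(\Ycal)=K(Y)$ and $\K(\Ycal)=K(Y)\oplus K(Y)\zeta$, and then a prime-divisor-by-prime-divisor comparison of coefficients via Proposition~\ref{prop:preparation}. The one substantive difference is that you are explicit about the extra hypothesis on the generic odd rank of $\Ycal$ needed in part~(1): the paper simply writes $\K(\Ycal)\simeq K(Y)\oplus\theta\,K(Y)$ in the nontrivial case without justifying that $(J_\Ycal)_\xi$ is one-dimensional over $K(Y)$, whereas you flag this as an assumption satisfied in the intended application (indeed, in Proposition~\ref{prop:push} the target $\Wc'$ is the superscheme-theoretic image of a supervariety of pure odd dimension~$1$, so $J_{\Wc'}^2=0$ by injectivity) --- your caution here is well placed.
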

\begin{proof}
(1) By Zariski's Main Theorem (Proposition \ref{prop:zariski}) we can assume that $\Ycal=\SSpec \As$ is affine and that $f$ is finite, so that $\Xcal=\SSpec \Bs$ and $f$ is induced by a finite injective morphism $\As\hookrightarrow \Bs$. Since $\Bs$ has pure odd dimension 1, $\Bs=B\oplus J_\Bs$ with $(J_\Bs)_\xi\simeq \eta K(B)$ where where $\xi$ is the generic point of $Y$. Then $J_\As^2=0$ so that $\As=A\oplus J_\As$. If $(J_\As)_\xi=0$, then $\K(\As)=K(A)$ which is a field and the result is automatic. Otherwise, $\K(\As)=K(A)\oplus(J_\As)_\xi\simeq K(A)\oplus\theta K(A)$ and the  image of $\theta$ in $\K(\Bs)$ generates $\eta\cdot K(B)$. It follows that if $g_1,\dots, g_n$ is a basis of $K(B)$ as a $K(A)$-vector space, it is also a free (even) basis of $\K(\Bs)$ as a $\K(\As)$-module. 

(3) If $Z$ is a subvariety of $Y$ of codimension $1$, one has to prove that the coefficients of $Z$ in $f_\ast(\divi (g))$ and in $\divi (\ber(g))$ are the same. If $\As_\xi$ and $\Bs_\xi$ are the localizations of $\As$ and $\bar\Bs$ at the generic point $\xi$ of $Z$, both are superdomains of even dimension 1 and the points $x_1,\dots,x_s$ of the fibre of $\xi$ are the generic points of the irreducible components of $\bar X$ that maps onto $Z$. Then, the coefficient of $Z$ in $\bar f_\ast(\divi (g))$ is $\sum_{i=1}^s \dim_{\kappa(\xi)}\kappa(x_i) \ord_{x_i}(g)$, which is equal to the coefficient of $[Z]$ in $\divi (\ber(g))$ by Proposition \ref{prop:preparation}.
\end{proof}

\begin{prop}\label{prop:push} If $f\colon\Xcal\to\Ycal$ is a proper surjective morphism of superschemes and $\alpha\in Z_h(\Xcal)$ is a supercycle rationally equivalent to 0, then $f_\ast (\alpha)$ is rationally equivalent to 0. Thus $f_\ast \colon Z_h(\Xcal) \to Z_h(\Ycal)$ induces a morphism of $\Z_2$-graded $\Z^2$-modules
$$
f_\ast \colon A_h(\Xcal) \to A_h(\Ycal)
$$
between the $\Z^2$-modules of $h$-supercycles modulo rational equivalence.
\end{prop}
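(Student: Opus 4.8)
\emph{Proof strategy.} The plan is to imitate the classical argument (as in \cite[Prop.\,1.4]{Fu98} or \cite[Lemma 42.20.3]{Stacks}), performing two reductions and then splitting into three cases, each governed by a result already proved above. First, write $\alpha=\sum_{i=0}^{t}\delta_{i\ast}\divi(g_i)$ as in Definition \ref{def:ratequiv2}, with $\delta_i\colon\Wc_i\hookrightarrow\Xcal$ sub-supervarieties of even dimension $h+1$ and pure odd dimension $s_i\in\{0,1\}$ and $g_i\in\K(\Wc_i)^\ast$. Since $f_\ast$ is compatible with composition and $W_h(\Ycal)$ is a $\Z^2$-submodule of $Z_h(\Ycal)$, it suffices to show that $(f\circ\delta_i)_\ast\divi(g_i)$ is rationally equivalent to $0$ for each $i$; as $f\circ\delta_i$ is proper (Proposition \ref{prop:sorite}), we are reduced to proving: for a supervariety $\Wc$ of even dimension $h+1$ and pure odd dimension $s\in\{0,1\}$, a proper morphism $f\colon\Wc\to\Ycal$, and an even $g\in\K(\Wc)^\ast$, the cycle $f_\ast\divi(g)$ is rationally equivalent to $0$. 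When $s=0$ the superscheme $\Wc$ is bosonic, and so is the superscheme-theoretic image of $f$, so the statement is the classical one; hence we may take $s=1$.

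Secondly, replace $\Ycal$ by the superscheme-theoretic image $\Ycal'$ of $f$. One checks that $\Ycal'$ is again a supervariety, that $\Oc_{\Ycal'}\hookrightarrow f'_\ast\Oc_\Wc$, that $f'\colon\Wc\to\Ycal'$ is proper (Proposition \ref{prop:sorite}) and surjective, and that $\Jc_{\Ycal'}^{2}=0$, so that the odd dimension of $\Ycal'$ is $\le 1$. Since the closed immersion $\Ycal'\hookrightarrow\Ycal$ carries cycles rationally equivalent to $0$ to cycles rationally equivalent to $0$, we may assume that $\Ycal$ is a supervariety of even dimension $m$ and odd dimension $\le 1$ with $\Oc_\Ycal\hookrightarrow f_\ast\Oc_\Wc$; note $m\le h+1$ because $f$ is dominant.

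Now distinguish three cases. If $m\le h-1$, every component $V$ of $\divi(g)$ has even dimension $h$, whereas $\dim f(V)\le m<h$; hence $\deg(V/f(V))=0$ and $f_\ast\divi(g)=0$. If $m=h$, the morphism $f$ has relative even dimension $1$ and only components $V$ with $f(V)=\Ycal$ contribute, so $f_\ast\divi(g)=c\,[\Ycal]$ for a single $c\in\Z^2$, and it remains to show $c=0$. For this, base-change along the flat morphism $j\colon\SSpec\K(\Ycal)\to\Ycal$ (the generic point of $Y$): the generic fibre $\Wc_\eta:=\Wc\times_\Ycal\SSpec\K(\Ycal)$ is a proper supercurve of even dimension $1$ and pure odd dimension $1$ over the superfield $\K(\Ycal)$, which has odd dimension $\le 1$. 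By Proposition \ref{lem:basechange} applied to $j$, together with the fact that the orders $\ord_V(g)$ and the residue degrees are unchanged under this base change, $c$ is the coefficient appearing in $(f_\eta)_\ast\divi_{\Wc_\eta}(g)$; by Lemma \ref{lem:funcdiagram} the latter vanishes, so $c=0$ and $f_\ast\divi(g)=0$. Finally, if $m=h+1$, then $f\colon\Wc\to\Ycal$ is proper, surjective and generically finite between supervarieties of the same even dimension, with $\Wc$ of pure odd dimension $1$ and $\Oc_\Ycal\hookrightarrow f_\ast\Oc_\Wc$; thus Lemma \ref{lem:pushdiv} applies and yields
$$
f_\ast\divi(g)=\divi(\ber(g)),
$$
where $\ber(g)\in\K(\Ycal)^\ast$ is the berezinian of multiplication by $g$ on the free $\K(\Ycal)$-module $\K(\Wc)$; this is the divisor of an even, nonzero rational superfunction on the supervariety $\Ycal$ of even dimension $h+1$ and odd dimension $\le 1$ (and if that odd dimension is not attained at the generic point one has $\ber(g)\in\K(Y)^\ast$ and uses instead the bosonic reduction $Y\hookrightarrow\Ycal$ as ambient sub-supervariety), hence is rationally equivalent to $0$. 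This settles all three cases, proves the proposition, and shows that $f_\ast$ descends to the announced $\Z^2$-linear map $A_h(\Xcal)\to A_h(\Ycal)$.

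\emph{Main obstacle.} The only non-formal step is the case $m=h$: the whole point is to recognise the single coefficient $c$ as (essentially) the degree of a principal divisor on a proper supercurve over the function superfield $\K(\Ycal)$, so as to reduce it \emph{exactly} to the situation of Lemma \ref{lem:funcdiagram}. Making this identification precise requires verifying that the generic fibre $\Wc_\eta$ remains an integral superscheme of pure odd dimension $1$ and that the order functions and the degrees $\deg(V/\Ycal)$ behave correctly under the flat base change to the generic point of $Y$ — bookkeeping that is routine classically but must be redone in the graded setting. By contrast, once Lemmas \ref{lem:funcdiagram} and \ref{lem:pushdiv} are in hand, the cases $m\le h-1$ and $m=h+1$ are immediate.
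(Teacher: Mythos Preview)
Your argument is correct and follows the paper's proof essentially line for line: the same reduction to a single $\delta_\ast\divi(g)$, the same bosonic shortcut for $s=0$, the same replacement of the target by the superscheme-theoretic image $\Wc'$, and the same three-way split on $\dim W'$ invoking Lemma~\ref{lem:funcdiagram} when $\dim W'=h$ and Lemma~\ref{lem:pushdiv} when $\dim W'=h+1$. Your extra remark in the $m=h+1$ case---checking that the image supervariety has pure odd dimension $0$ or $1$ so that $\divi(\ber(g))$ is an admissible witness in Definition~\ref{def:ratequiv2}, and falling back to the bosonic reduction otherwise---is a point the paper glosses over, so you are if anything slightly more careful there.
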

\begin{proof} We can assume that $\alpha=\delta_\ast\divi (g)$ where $g$ is an even rational superfunction on a  subvariety $\delta\colon\Wc\hookrightarrow\Xcal$ of  even dimension $h+1$  and pure odd dimension $s=0$  or $s=1$  of $\Xcal$. When $s=0$, the situation is purely bosonic and the statement reduces to the ordinary (non super) one (\cite[Lemma 42.20.3]{Stacks} or \cite[Prop.\,1.4]{Fu98}). Suppose then that $\dim\Wc=h+1|1$.

If $\Wc'$ is the superscheme-theoretic image of the composition $\Wc\hookrightarrow\Xcal\xrightarrow{f}\Ycal$,  the restriction $f'\colon \Wc \to \Wc'$ is proper and surjective and the natural morphism $\Oc_{\Wc'}\to f'_\ast \Oc_\Wc$ is injective. It is enough to prove that either $f'_\ast \alpha=0$ or  $f'_\ast(\alpha)=[\divi(g')]$ for an even rational superfunction $g'\in \K(\Wc')$.

Let $h'$ be the dimension of $W'$. If $h'< h$, then automatically $f'_\ast \alpha=0$. 

We consider now the case $h'=h$. If $\xi$ is the generic point of $W'$, one has $\K(\Wc')=\Oc_{\Wc',\xi}$. The bosonic reduction of the superscheme $\SSpec \Oc_{\Wc',\xi}$ is $\{\xi\}=\Spec K(W')$. Consider the cartesian diagram
$$
\xymatrix{\Wc_{(\xi)} \ar[r] \ar[d]^g & \Wc \ar[d]^{f'}
\\
(\xi):=\SSpec \K(\Wc')\ar[r] & \Wc'
}
$$
The coefficient of $[W']$ in $f'_\ast(\divi g)$ is equal to the coefficient of $[\{\xi\}]$ in  $g_\ast(\divi g)$ because the superfield of rational superfunctions of $\SSpec  \K(\Wc')$ is $\K(\Wc')$. By Lemma \ref{lem:funcdiagram} $g_\ast(\divi g)=0$, which finishes this case.

The only remaining case is $\dim W'=h+1$. In this case we can apply Lemma \ref{lem:pushdiv} to complete the proof.
\end{proof}

\section{Stable supercurves}
\label{supercurves}
\subsection{Supercurves}

We adopt the following definition, that generalizes the definition of smooth superschemes of dimension $1|1$:
\begin{defin}
A supercurve is a reduced superscheme $\Xcal$ of pure even dimension 1 and pure odd dimension 1, according to Definition \ref{def:dim}.
\end{defin}
One then has \marginnote{$\Pi$? (in tutto il file)}
\begin{equation}\label{eq:scurveproj}
\Oc_\Xcal =\Oc_X\oplus \Lcl
\end{equation}
so that $\Xcal$ is projected. Since we are not assuming that $\Lcl$ is a line bundle, $\Xcal$ may fail to be split.

\begin{defin}\label{def:supercurve}
A relative supercurve is a flat morphisms of superschemes $f\colon \Xcal\to\Sc$ whose fibres are supercurves. 
\end{defin}

For ordinary curves (schemes of dimension 1), the CM condition is quite simple: a curve is CM if and only if it has neither isolated nor embedded points. One then has:

\begin{prop} For any   supercurve $f\colon \Xcal \to \Sc$, the bosonic curve $f_{bos}\colon X \to S$ is CM.
\end{prop}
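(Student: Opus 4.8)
The plan is to argue fibrewise: I will show that every fibre of $f_{bos}$ is a reduced curve, hence Cohen--Macaulay, and then conclude --- using also that $f_{bos}$ is proper and flat --- that $f_{bos}$ is a Cohen--Macaulay morphism in the sense of Definition~\ref{def:CM}. Properness is immediate: since $f$ is proper, Proposition~\ref{prop:proper} gives that $f_{bos}\colon X\to S$ is proper. For the fibres, fix $s\in S$; the fibre of $f_{bos}$ over $s$ is the bosonic reduction $X_s=(\Xcal_s)_{bos}$ of the fibre $\Xcal_s$ of $f$, because forming the bosonic reduction commutes with base change along the inclusion $\Spec\kappa(s)\hookrightarrow\Sc$ of a residue field --- both sides being $\Oc_\Xcal\otimes_{\Oc_\Sc}\kappa(s)$ modulo the ideal generated by its odd elements. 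By the definition of a relative supercurve, $\Xcal_s$ is a supercurve, i.e.\ a reduced superscheme of even dimension~$1$; hence, by the characterisation of reducedness recalled in Section~\ref{s:supergeom} (a superscheme is reduced precisely when its bosonic reduction is a reduced scheme), $X_s$ is a reduced scheme with $\dim X_s=1$.

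The core of the argument is then the elementary fact recalled just before the statement: a scheme of dimension $\le 1$ is Cohen--Macaulay as soon as it has no embedded points. A reduced Noetherian scheme has no embedded points, its associated primes being the minimal ones; equivalently, for $x\in X_s$ the local ring $\Oc_{X_s,x}$ is reduced of dimension $\le 1$, so its maximal ideal is an associated prime only when the ring is a field, whence $\operatorname{depth}\Oc_{X_s,x}=\dim\Oc_{X_s,x}$ and $\Oc_{X_s,x}$ is Cohen--Macaulay. Since Cohen--Macaulayness of a scheme over a field is stable under field extension, the geometric fibres of $f_{bos}$ are Cohen--Macaulay curves as well. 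Granting flatness of $f_{bos}$, the discussion preceding Definition~\ref{def:CM} then yields that $f_{bos}$ is a Cohen--Macaulay morphism: $f_{bos}^{!}\Oc_S$ is concentrated in degree $-1$ with $S$-flat cohomology sheaf.

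The step I expect to be the real obstacle is exactly the flatness of $f_{bos}$, which does not follow formally from the flatness of $f$: indeed $\Oc_X=\Oc_\Xcal/\Jc_\Xcal$ is only a quotient of the $\Oc_\Sc$-flat sheaf $\Oc_\Xcal$. To handle it I would use that the fibres of $f$ are reduced one-dimensional superschemes, so that the odd ideal is fibrewise torsion-free; this is what makes $\Tor_1^{\Oc_S}(\Oc_X,\kappa(s))$ vanish for every $s$, and the local criterion of flatness then gives that $\Oc_X$ is $S$-flat. This is precisely the point at which the hypotheses ``reduced'' and ``pure odd dimension~$1$'' on the fibres enter the proof.
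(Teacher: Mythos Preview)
The paper gives no explicit proof of this proposition: it is stated as an immediate consequence of the sentence preceding it, namely that a one-dimensional scheme is CM if and only if it has neither isolated nor embedded points. Your fibrewise argument---the fibres $X_s$ are reduced curves, hence have no embedded or isolated points, hence are CM---is precisely what the paper intends, and your verification that $(\Xcal_s)_{\bos}\simeq X_s$ makes explicit a step the paper leaves implicit.

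You go further than the paper in correctly flagging that the CM condition of Definition~\ref{def:CM} requires flatness of $f_{\bos}$, and that this does not follow formally from flatness of $f$. The paper does not address this at all. Your sketched resolution via the local criterion is in the right spirit, but the link you draw between ``the odd ideal is fibrewise torsion-free'' and the vanishing of $\Tor_1^{\Oc_S}(\Oc_X,\kappa(s))$ is not transparent as written (note also that one must pass from $\Tor^{\Oc_\Sc}$ to $\Tor^{\Oc_S}$). In the only case the paper actually uses afterwards---the next subsection, where the base $\Sc=S$ is an ordinary scheme---flatness of $f_{\bos}$ is immediate: the $\Z_2$-splitting $\Oc_\Xcal=\Oc_X\oplus\Lcl$ over a purely even base is a splitting of $\Oc_S$-modules, so flatness of $\Oc_\Xcal$ over $\Oc_S$ forces flatness of the direct summand $\Oc_X$. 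Over a genuinely super base the question is more delicate, and the paper does not supply the argument; your instinct to isolate this as the real content is well placed.
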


\subsection{CM supercurves over ordinary schemes}

Any supercurve $f\colon \Xcal\to S$ over an ordinary scheme $S$ is projected: one has a morphism $\rho\colon \Xcal\to X$ of $S$-superschemes such that $\rho\circ j=\Id_X$. This is due to the fact that the decomposition given by Equation \eqref{eq:scurveproj} is still true in that case.

The dualizing complex of  a proper supercurve  $f\colon \Xcal \to S$ is then computed  using duality for the projection $\rho\colon\Xcal \to X$. Since the higher derived images of $\rho$ are zero, one has
\begin{align*}
	f^!(\Oc_S)\simeq p^!(\omega_{f_{bos}}[1])&\simeq \bR\Homsh_{\Oc_X}(\Oc_\Xcal, \omega_{f_{bos}}[1])\\
						 &\simeq (\omega_{f_{bos}}\oplus \bR\Homsh_{\Oc_X}(\Lcl, \omega_{f_{bos}}))[1]\,.
\end{align*}
One then has:
\begin{prop}\label{prop:CM}
 $f\colon \Xcal\to S$ is CM if and only if $\Extsh_{\Oc_X}^i(\Lcl,\omega_{f_{bos}})=0$ for $i\geq 1$.  In this case, the relative dualizing sheaf is 
$$
\omega_{f}\iso\omega_{f_{bos}}\oplus \Homsh_{\Oc_X}(\Lcl, \omega_{f_{bos}})\,,
$$
and the structure of module over $\Oc_\Xcal =\Oc_X\oplus \Lcl$ is given as follows: $\Oc_X$ acts naturally on the two factors; $\Lcl$ acts on $\omega_{f_{bos}}$ by zero and on $\Homsh_{\Oc_X}(\Lcl, \omega_{f_{bos}})$ as the evaluation
$$
\Lcl \otimes \Homsh_{\Oc_X}(\Lcl, \omega_{f_{bos}})\to \omega_{f_{bos}}\,.
$$
\qed
\end{prop}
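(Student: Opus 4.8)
The plan is to extract the statement from the computation of $f^{!}\Oc_{S}$ displayed just above the proposition, treating the flatness of $\omega_{f}$ as a separate point. Write $f=f_{bos}\circ\rho$ with $\rho\colon\Xcal\to X$ the projection; $\rho$ is finite, $f_{bos}$ is proper (Proposition~\ref{prop:proper}) and, $f$ being a proper supercurve, a CM morphism (as recalled above for proper supercurves), so $f_{bos}^{!}\Oc_{S}\simeq\omega_{f_{bos}}[1]$. Composing adjoints, using $\bR\rho_{\ast}=\rho_{\ast}$ and the formula for $\rho^{!}$ of a finite morphism, and splitting $\Oc_{\Xcal}=\Oc_{X}\oplus\Lcl$ gives $f^{!}\Oc_{S}\simeq\bigl(\omega_{f_{bos}}\oplus\bR\Homsh_{\Oc_{X}}(\Lcl,\omega_{f_{bos}})\bigr)[1]$, hence $\Hc^{-1}(f^{!}\Oc_{S})=\omega_{f_{bos}}\oplus\Homsh_{\Oc_{X}}(\Lcl,\omega_{f_{bos}})$, $\Hc^{i-1}(f^{!}\Oc_{S})=\Extsh^{i}_{\Oc_{X}}(\Lcl,\omega_{f_{bos}})$ for $i\geq1$, and $\Hc^{j}(f^{!}\Oc_{S})=0$ for $j\leq-2$. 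Since $f$ is proper and flat and the relative even dimension is $m=1$, Definition~\ref{def:CM} says $f$ is CM exactly when $f^{!}\Oc_{S}$ is quasi-isomorphic to a single $S$-flat sheaf placed in degree $-1$; concentration in degree $-1$ is precisely the vanishing $\Extsh^{i}_{\Oc_{X}}(\Lcl,\omega_{f_{bos}})=0$ for $i\geq1$, and then $\omega_{f}=\Hc^{-1}(f^{!}\Oc_{S})=\omega_{f_{bos}}\oplus\Homsh_{\Oc_{X}}(\Lcl,\omega_{f_{bos}})$. This settles the ``only if'' direction and the formula for $\omega_{f}$.

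For the ``if'' direction I still have to check that, assuming $\Extsh^{i}_{\Oc_{X}}(\Lcl,\omega_{f_{bos}})=0$ for $i\geq1$ so that $f^{!}\Oc_{S}\simeq\omega_{f}[1]$, the sheaf $\omega_{f}$ is automatically $S$-flat. The idea is to use base change for $f^{!}$. By Proposition~\ref{prop:arbitrarybc} ($f$ is flat and locally of finite type) one has $\bL j_{s}^{\ast}(f^{!}\Oc_{S})\simeq f_{s}^{!}\kappa(s)=\Dcal^{\bullet}_{\Xcal_{s}}$ for every $s\in S$. The left-hand side equals $\bigl(\omega_{f}\otimes^{\bL}_{\Oc_{S}}\kappa(s)\bigr)[1]$, which lives in cohomological degrees $\leq-1$. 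The right-hand side, computed by exactly the same argument over the point $\Spec\kappa(s)$ --- $\Xcal_{s}$ is a supercurve over $\Spec\kappa(s)$ and $(f_{bos})_{s}\colon X_{s}\to\Spec\kappa(s)$ is a CM morphism by base change (Proposition~\ref{prop:bcCM}) --- equals $\bR\Homsh_{\Oc_{X_{s}}}(\Oc_{\Xcal_{s}},\omega_{X_{s}})[1]$, which lives in cohomological degrees $\geq-1$. A complex lying simultaneously in degrees $\leq-1$ and $\geq-1$ is concentrated in degree $-1$; hence $\omega_{f}\otimes^{\bL}_{\Oc_{S}}\kappa(s)$ is concentrated in degree $0$ for every $s$, so $\omega_{f}$ is flat over $S$ and $f$ is CM. (As a by-product the comparison also gives $\Extsh^{i}_{\Oc_{X_{s}}}(\Lcl_{s},\omega_{X_{s}})=0$ for $i\geq1$, i.e. each fibre $\Xcal_{s}$ is CM, in agreement with the fibrewise criterion for CM morphisms.)

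The $\Oc_{\Xcal}$-module structure on $\omega_{f}=\Homsh_{\Oc_{X}}(\Oc_{\Xcal},\omega_{f_{bos}})$ is then read off directly: writing $\phi$ as the pair $(\phi|_{\Oc_{X}},\phi|_{\Lcl})\in\omega_{f_{bos}}\oplus\Homsh_{\Oc_{X}}(\Lcl,\omega_{f_{bos}})$ and using $(a\cdot\phi)(b)=\phi(ab)$, the even part $\Oc_{X}$ acts diagonally, while for a local section $\ell$ of $\Lcl$ one computes $(\ell\cdot\phi)|_{\Oc_{X}}=\phi(\ell)$ --- the evaluation of $\phi|_{\Lcl}$ at $\ell$ --- and $(\ell\cdot\phi)|_{\Lcl}=0$, because $\Lcl^{2}\subseteq\Jc_{\Xcal}^{2}=0$ as the supercurve has pure odd dimension $1$. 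This is exactly the action in the statement: $\Lcl$ kills the summand $\omega_{f_{bos}}$ and acts on $\Homsh_{\Oc_{X}}(\Lcl,\omega_{f_{bos}})$ through the evaluation pairing $\Lcl\otimes\Homsh_{\Oc_{X}}(\Lcl,\omega_{f_{bos}})\to\omega_{f_{bos}}$.

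I do not expect a serious obstacle, since everything rests on results already established; the point needing the most care is the last step of the flatness argument --- reading off $S$-flatness of $\omega_{f}$ from the fibrewise base-change isomorphism --- because that is where the hypothesis $\Extsh^{i}_{\Oc_{X}}(\Lcl,\omega_{f_{bos}})=0$ is genuinely used and where one must keep track of the cohomological shifts.
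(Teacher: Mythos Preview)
Your argument is correct and in fact more complete than the paper's. The paper gives no proof beyond the displayed computation of $f^{!}\Oc_{S}$ immediately preceding the proposition (note the \textsf{qed} symbol with no intervening text): it simply reads off the cohomology sheaves of $(\omega_{f_{bos}}\oplus\bR\Homsh_{\Oc_{X}}(\Lcl,\omega_{f_{bos}}))[1]$ and declares the result, without addressing the $S$-flatness of $\omega_{f}$ that Definition~\ref{def:CM} requires, nor spelling out the $\Oc_{\Xcal}$-module structure.

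Your additions --- the base-change argument via Proposition~\ref{prop:arbitrarybc} to bound $\bL j_{s}^{\ast}(f^{!}\Oc_{S})$ from both sides and hence force $\Tor_{i}^{\Oc_{S}}(\omega_{f},\kappa(s))=0$ for $i>0$, and the explicit unpacking of the $\Lcl$-action using $\Lcl^{2}=0$ --- fill genuine gaps that the paper leaves to the reader. The approach is the same as the paper's (use the factorisation $f=f_{bos}\circ\rho$ and the finite-morphism formula for $\rho^{!}$); you have just carried it through to the end.
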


Thus, we need to impose some conditions on the sheaf $\Lcl$  for $f\colon \Xcal\to S$ to be CM. 

\begin{prop} If $\Extsh_{\Oc_{X_s}}^i(\Lcl_{X_s},\omega_{X_s})=0$ for $i\geq 1$, the sheaves $\Lcl_{X_s}$ are of pure dimension 1 (and then are CM).
\end{prop}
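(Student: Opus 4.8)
The plan is to reduce the statement to the classical theory and then apply local duality on the fibres, which are one-dimensional Cohen--Macaulay schemes. Since the assertion is fibrewise, I would fix $s\in S$ and work on the ordinary scheme $C:=X_s$, which is of finite type over the field $\kappa(s)$, writing $\F:=\Lcl_{X_s}$ and $\omega_C:=\omega_{X_s}$ for its dualizing sheaf. First I would record the basic facts: $C$ is reduced of dimension $1$ (it is the bosonic reduction of the supercurve $\Xcal_s$), hence Cohen--Macaulay of pure dimension $1$, so that its dualizing complex is $\omega^\bullet_C\simeq\omega_C[1]$; and $\F$ is a coherent $\Oc_C$-module with $\op{Supp}\F=C$, because the pure odd dimension $1$ hypothesis makes $\Lcl_{X_s}$ generically of rank $1$ on every irreducible component of $C$. (If $\F=0$ the claim is vacuous, so I may assume $\F\neq 0$; note also that no properness of $f$ is needed, the argument being entirely local on $C$.)

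The heart of the matter is local duality. For a closed point $x\in C$, set $R=\Oc_{C,x}$ (a Cohen--Macaulay local ring of dimension $1$), $\mf=\mf_x$, and $\omega_R=(\omega_C)_x$ the canonical module. Local duality gives Matlis-dual isomorphisms $H^i_{\mf}(\F_x)^{\vee}\simeq\Ext^{1-i}_R(\F_x,\omega_R)$ for all $i$. Localising the hypothesis $\Extsh^i_{\Oc_C}(\F,\omega_C)=0$ ($i\geq 1$) at $x$ therefore forces $H^i_{\mf}(\F_x)=0$ for all $i\leq 0$, i.e.\ $\op{depth}_R\F_x\geq 1$; since $\F_x\neq 0$ and $\dim_R\F_x\leq 1$, this yields $\op{depth}_R\F_x=\dim_R\F_x=1$, that is, $\F_x$ is a maximal Cohen--Macaulay $R$-module. (In the paper's language one may equivalently phrase this globally via Grothendieck duality: the hypothesis says precisely that the complex $\bR\Homsh_{\Oc_C}(\F,\omega^\bullet_C)$ is concentrated in a single cohomological degree, which is the defining condition for $\F$ to be a Cohen--Macaulay sheaf. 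It may be cleanest to present the contrapositive: an embedded zero-dimensional associated point of $\F$ would produce a nonzero submodule $T\subset\F$ of dimension $0$, and $\Ext^1_{\Oc_C}(T,\omega_C)\neq 0$ then injects into $\Ext^1_{\Oc_C}(\F,\omega_C)$ --- using $\Ext^{\geq 2}(-,\omega_C)=0$, i.e.\ $\operatorname{inj.dim}\omega_C=\dim C=1$ --- contradicting the hypothesis.)

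Letting $x$ range over the closed points of $C$, this shows that $\Lcl_{X_s}$ is a Cohen--Macaulay coherent sheaf; its associated points are then exactly the generic points of the irreducible components of $\op{Supp}\Lcl_{X_s}=C$, which are one-dimensional, so $\Lcl_{X_s}$ has no zero-dimensional associated point, i.e.\ it is pure of dimension $1$ --- and Cohen--Macaulay, as just shown. I expect the only real care to be in setting up the duality correctly: checking that $C$ carries a dualizing complex (immediate, as $C$ is of finite type over a field), that $C$ is Cohen--Macaulay (immediate, as $C$ is reduced of dimension $1$), and that the sheaf $\omega_{X_s}$ in the statement is indeed the dualizing sheaf of $C$, equivalently the restriction $\omega_{f_{bos}\vert X_s}$ --- the latter being the content of the base-change compatibility recorded earlier. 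Beyond this the proof is a direct application of local duality, with no substantial obstacle.
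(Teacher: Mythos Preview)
Your argument is correct, and it is cleaner than the paper's. The paper proceeds by contrapositive as in your parenthetical: it takes a nonzero subsheaf $\M\subset\Lcl_{X_s}$ supported in dimension $0$, passes to the long exact sequence of $\Extsh(-,\omega_{X_s})$, and then deduces $\Extsh^1(\M,\omega_{X_s})=0$ via a local-to-global spectral sequence for $\Nc=\Lcl_{X_s}/\M$, contradicting the Cohen--Macaulayness of $\omega_{X_s}$. Your main route via local duality on the one-dimensional CM local rings $\Oc_{C,x}$ bypasses both the long exact sequence and the spectral sequence: the single identity $H^0_{\mf}(\F_x)^\vee\simeq\Ext^1_R(\F_x,\omega_R)$ immediately gives $\operatorname{depth}\F_x\geq 1$, which is the whole content of the proposition. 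This is shorter and makes the role of the hypothesis more transparent.

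One small slip to fix in your parenthetical contrapositive: the long exact sequence for $0\to T\to\F\to\F/T\to 0$ gives a \emph{surjection} $\Ext^1_{\Oc_C}(\F,\omega_C)\twoheadrightarrow\Ext^1_{\Oc_C}(T,\omega_C)$ (using $\Ext^2_{\Oc_C}(\F/T,\omega_C)=0$), not an injection of $\Ext^1(T,\omega_C)$ into $\Ext^1(\F,\omega_C)$. The conclusion is of course unaffected: $\Ext^1(\F,\omega_C)=0$ still forces $\Ext^1(T,\omega_C)=0$, which is impossible for a nonzero zero-dimensional $T$.
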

\begin{proof} 
Since $X_s$ is CM, the dualizing sheaf $\omega_{X_s}$ is CM as well. 
Now, if $\Lcl_{\Xcal_s}$ is not pure, there exists a subsheaf $\M$ supported in dimension zero. Taking $\Homsh_{\Oc_{X_s}}(\ ,\omega_{X_c})$ in
$$
0\to \M \to \Lcl_{X_s} \to \Nc \to 0\,,
$$
we obtain 
$$
\Extsh_{\Oc_{X_s}}^1(\M,\omega_{X_s})\iso\Extsh_{\Oc_{X_s}}^2(\Nc,\omega_{X_s})
$$
and $\Extsh_{\Oc_{X_s}}^i(\Nc,\omega_{X_s})=0$ for $i\neq 0,2$.
Then, the local-to-global spectral sequence $E_2^{p.q}=H^p(X_s,\Extsh_{\Oc_{X_s}}^q(\Nc,\omega_{X_s}) \implies E_\infty^{p+q}=\Ext_{X_s}(\Nc,\omega_{X_s})$ has spherical fibres, and the corresponding exact sequence gives
$$
0= E_2^{0,2}=H^0(X_s,\Extsh_{\Oc_{X_s}}^2(\Nc,\omega_{X_s}))=H^0(X_s,\Extsh_{\Oc_{X_s}}^1(\M,\omega_{X_s}))\,.
$$
Since $\M$ is supported in dimension zero, this implies that $\Extsh_{\Oc_{X_s}}^1(\M,\omega_{X_s})=0$, which contradicts the fact that $\omega_{X_s}$ is a CM sheaf.
\end{proof}
\begin{corol} If $f\colon \Xcal\to S$ is CM, the even and odd components of the dualizing sheaf $\omega_{f}\iso\omega_{f_{bos}}\oplus \Homsh_{\Oc_X}(\Lcl, \omega_{f_{bos}})$ are relatively CM (relatively of pure dimension 1) sheaves on $X$.
\qed
\end{corol}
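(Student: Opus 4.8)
The plan is to analyze the two $\Z_2$-homogeneous summands of $\omega_f\simeq\omega_{f_{bos}}\oplus\Homsh_{\Oc_X}(\Lcl,\omega_{f_{bos}})$ provided by Proposition~\ref{prop:CM} separately, in each case checking flatness over $S$ and computing the restriction to a fibre $X_s$. First I would note that by Definition~\ref{def:CM} the relative dualizing sheaf $\omega_f$ is flat over $S$; since it is a direct sum of $\Oc_S$-modules, both $\omega_{f_{bos}}$ and $\Homsh_{\Oc_X}(\Lcl,\omega_{f_{bos}})$ are flat over $S$. Hence ``relatively CM / relatively of pure dimension $1$'' will follow once I show that for every $s\in S$ the fibres of these two sheaves are CM sheaves of pure dimension $1$ on $X_s$.

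To identify those fibres, observe that $f$, being CM, is proper, flat and locally of finite type, so by the fibrewise characterization of CM morphisms proved above each fibre $\Xcal_s\to\Spec\kappa(s)$ is again a CM (relative) supercurve and $\omega_f|_{\Xcal_s}\simeq\omega_{\Xcal_s}$. Applying Proposition~\ref{prop:CM} to this fibre supercurve gives the decomposition $\omega_{\Xcal_s}\simeq\omega_{X_s}\oplus\Homsh_{\Oc_{X_s}}(\Lcl_{X_s},\omega_{X_s})$ together with the vanishing $\Extsh^i_{\Oc_{X_s}}(\Lcl_{X_s},\omega_{X_s})=0$ for $i\ge 1$. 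Since the summands of $\omega_f$ are $S$-flat, their restriction to $X_s$ is the ordinary tensor product with $\kappa(s)$, so the splitting of $\omega_f$ restricts to a splitting of $\omega_f|_{\Xcal_s}$; comparing even and odd parts with the splitting of $\omega_{\Xcal_s}$ I would conclude $\omega_{f_{bos}}|_{X_s}\simeq\omega_{X_s}$ and $\Homsh_{\Oc_X}(\Lcl,\omega_{f_{bos}})|_{X_s}\simeq\Homsh_{\Oc_{X_s}}(\Lcl_{X_s},\omega_{X_s})$.

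It then remains to verify purity of dimension $1$ of these fibres. For the even component this is immediate: $X_s$ is a CM curve (the bosonic reduction of a proper supercurve is CM), and the dualizing sheaf of a CM curve is a CM sheaf of pure dimension $1$, as recalled in the proof of the preceding proposition. For the odd component I would invoke the preceding proposition: the vanishing $\Extsh^i_{\Oc_{X_s}}(\Lcl_{X_s},\omega_{X_s})=0$ $(i\ge 1)$ forces $\Lcl_{X_s}$ to be of pure dimension $1$, hence a maximal Cohen--Macaulay sheaf on the CM curve $X_s$; Grothendieck--Serre duality for coherent sheaves on $X_s$ then shows that its $\omega_{X_s}$-dual $\Homsh_{\Oc_{X_s}}(\Lcl_{X_s},\omega_{X_s})$ is again maximal Cohen--Macaulay, in particular of pure dimension $1$. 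Together with the flatness noted at the outset, this gives that both $\omega_{f_{bos}}$ and $\Homsh_{\Oc_X}(\Lcl,\omega_{f_{bos}})$ are relatively CM (relatively of pure dimension $1$) over $S$. The step I expect to need the most care is the base-change identification of the fibres of the two summands in the second paragraph --- which is why I would route it through the fibrewise CM criterion rather than a direct $\bR\Homsh$ base-change computation --- while the only non-formal input is the standard duality fact that the $\omega_{X_s}$-dual of a maximal CM sheaf on a CM curve is maximal CM.
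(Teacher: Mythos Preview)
Your proof is correct and follows the same reasoning the paper has in mind: the corollary is stated with an immediate \qed, as the paper views it as a direct consequence of Proposition~\ref{prop:CM} and the proposition just before it (which gives that $\Lcl_{X_s}$ is pure of dimension~$1$), together with the standard fact that the $\omega_{X_s}$-dual of a maximal CM sheaf on a CM curve is again maximal CM. You have simply spelled out the flatness and base-change details that the paper leaves implicit; in particular your routing of the fibre identification through the fibrewise CM criterion is exactly the right way to justify the step the paper takes for granted.
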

\begin{remark}
If $f\colon \Xcal \to S$ is smooth, then  $\omega_{f_{bos}}\iso\kappa_{X/S}$ and $\Lcl$ is locally free of rank one; moreover,
$$
\omega_{f}\iso \kappa_{X/S}\oplus (\Lcl^{-1}\otimes_{\Oc_X} \kappa_{X/S}) \iso  (\Lcl^{-1}\otimes_{\Oc_X} \kappa_{X/S})\otimes_{\Oc_X}\Oc_{\Xcal}^\Pi\,.
$$
The latter is a purely odd line bundle on $\Xcal$, and one recovers  the isomorphism
$\omega_{f}\iso \Ber_{f}$.
\end{remark}

\subsection{Singular SUSY curves with Ramond-Ramond punctures}

If $f\colon\Xcal \to \Sc$ is a proper smooth supercurve and $\Zc\hookrightarrow\Xcal $ is a positive relative  superdivisor \cite[Def. 3.2]{BrHR21}, the existence  of a superconformal structure $\Dcal\hookrightarrow \Theta_f=\Omega_f^\ast$ with RR punctures at $\Zc$ is   equivalent to the existence of a sheaf epimorphism 
$$
\Omega_f\xrightarrow{\bar\delta} \Ber_f(\Zc)\to 0
$$
such that the composition
$$
\ker\bar\delta \hookrightarrow \Omega_f\xrightarrow{d} \Omega_f\wedge \Omega_f\xrightarrow{\bar\delta\wedge\bar\delta} \Ber_f^{\otimes 2}(2\Zc)
$$
yields an isomorphism
$$
\ker\bar\delta \iso \Ber_f^{\otimes 2}(\Zc)\,,
$$
(see \cite[Prop. 3.5]{BrHR21}). The RR-superconformal  distribution $\Dcal$ is recovered as the image of the dual morphism $\bar\delta^\ast\colon \Ber_f^\ast(-\Zc)\hookrightarrow \Theta_f$. 

Since the relative Berezinian is the dualizing sheaf for $f$, we can generalize the notion of SUSY-curve with RR punctures to the singular case.

\begin{defin}\label{def:SUSYCM} Let $f\colon\Xcal \to \Sc$ be a proper CM relative curve and $\Zc\hookrightarrow\Xcal $  a relative positive superdivisor. We say that $f\colon\Xcal \to \Sc$ is a RR-SUSY curve along $\Zc$ if there exists an epimorphism
$$
\Omega_f\xrightarrow{\bar\delta} \omega_f(\Zc)\to 0\,,
$$
where $\omega_f$ is the relative dualizing sheaf, such that the composition
$$
\ker\bar\delta \hookrightarrow \Omega_f\xrightarrow{d} \Omega_f\wedge \Omega_f\xrightarrow{\bar\delta\wedge\bar\delta} \omega_f^{\otimes 2}(2\Zc)
$$
yields an isomorphism
$$
\ker\bar\delta \iso \omega_f^{\otimes 2}(\Zc)\,.
$$
\end{defin}

The irreducible components $\Zc_i$ of the superdivisor $\Zc$ are the \emph{Ramond-Ramond} punctures. 

\begin{remark} In what follows we shall assume that $\Zc$ is contained in the smooth locus $\U\to\Sc$ of $f$ and that it is not ramified over the base. Then, the $\Zc_i$'s have relative degree 1, do not intersect each other and  $\Zc$ is the disjoint union of them; we write $\Zc=\sum_i\Zc_i$.
\end{remark}

Note that in the CM case $\bar\delta$ cannot  in general be recovered   from the superconformal distribution $\Dcal$ as $\Theta_f^\ast$ may fail to be equal to $\Omega_f$.

Let $f\colon\Xcal \to S$ be an RR-SUSY curve along $\Zc$ over an ordinary scheme $S$, so that $\Xcal$ is projected and $\Oc_\Xcal\simeq \Oc_X\oplus \Lcl$. Taking into account the expression for $\omega_f$ given by Proposition \ref{prop:CM}, the surjective derivation $\delta\colon\Oc_\Xcal \to \omega_f(\Zc)$ is equivalent to the following data:
\begin{enumerate}
\item a surjective derivation $\delta_+\colon \Oc_X\to \omega_{f_{bos}}(Z)$ over $\Oc_S$;
\item an epimorphism of $\Oc_X$-modules $\delta_-\colon\Lcl\to \Homsh_{\Oc_X}(\Lcl, \omega_{f_{bos}}(Z))$.
\end{enumerate}

\begin{remark} If the bosonic fibres are Gorenstein (for instance, if they are nodal curves), the dualizing sheaf $\omega_{f_{bos}}$ is a line bundle.
 If in addition $\Lcl$ is a line bundle on $X$, then
\begin{enumerate}
\item $\Xcal$ is split;
\item $\omega_f$ is a line bundle on $\Xcal$ of rank $0|1$ (Proposition 5.4);
\item $\Dcal$ is a locally free subsheaf of rank $0|1$ of $\Theta_f$.
\end{enumerate}
In this case, \emph{the distribution $\Dcal$ determines $\bar\delta$}, because $\bar\delta$ is the composition of the natural morphism $\Omega_f\to \Omega_f^{\ast\ast}$ with the dual morphism
$\Omega_f^{\ast\ast}=\Theta_f^\ast \to \Dcal^\ast=(\omega_f(\Zc))^{\ast\ast}=\omega_f(\Zc)$ of the immersion $\Dcal\hookrightarrow \Theta_f$.
\label{rem:rifatto}
\end{remark}

\subsection{Pre-stable and stable supercurves}

Fix an algebraically closed field $k$.

Recall that a (ordinary) \emph{pre-stable} curve over $k$ is a proper connected curve over $k$ whose singularities are simple nodes \cite[Chap.\,III, Def.\,2.1]{Ma99}. A  pre-stable $n$-pointed curve is a pair $(X,D)$, where $X$ is a pre-stable curve and $D$ is an ordered family of $n$ different points $x_1,\dots,x_n$. \marginnote{Rimessa definizione di curva prestabile}

 As we have already mentioned, every pre-stable curve $X$ is Gorenstein. The reason for that is that pre-stable curves are locally complete intersections. Then the dualizing sheaf $\omega_X$ is a line bundle.

The notion of stability of $n$-pointed curves can be given in several equivalent ways. One of them is to say that a pre-stable $n$-pointed curve is \emph{stable} when the line bundle $\omega_X(D)$ is ample. This is equivalent to any of the following conditions \cite{DelMum69}:
\begin{enumerate}
\item
A pre-stable $n$-pointed curve $(X,D)$ is stable if and only if any rational component of the normalization $\tilde X$ contains at least three points from $\pi^{-1}(D\cup X_{sing})$, (where $\pi\colon\tilde X\to X$ is the normalization morphism) and any component $\tilde X_i$ with genus $g(\tilde X_i) = 1$ contains at least one such point.
\item A pre-stable $n$-pointed curve $(X,D)$ is stable if and only if for any component $\tilde X_i$ of $\tilde X$ one has $2 g(\tilde X_i)-2 + n_i>0$ where $n_i$ is the number of marked points (the ones in $\pi^{-1}(D\cup X_{sing})$) contained in $\tilde X_i$.
\item A pre-stable $n$-pointed curve $(X,D)$ is stable if and only if it has a finite number of automorphisms.
\end{enumerate}

We now recall, with a slight reformulation, the definition of stable supercurves with Neveu-Schwarz (NS) and Ramond-Ramond (RR) punctures \cite{Del87,FKP20}.

\begin{defin}\label{def:stablecurve}
 A stable (resp. pre-stable) SUSY curve of arithmetic genus $g$ with punctures over a superscheme $\Sc$ is a proper and Cohen-Macaulay (Definition \ref{def:CM}) $\Sc$-supercurve $f\colon \Xcal \to \Sc$  (Definition \ref{def:supercurve}), together with:
\begin{enumerate}
\item A collection of disjoint closed sub-superschemes $\Xcal_i\hookrightarrow \U$ ($i=1,\dots,\nf_{NS}$), where $\U$ is the smooth locus of $f$, such that $\pi\colon \Xcal_i\to\Sc$ is an isomorphism for every $i$. They are called   \emph{NS-punctures}.
\item A collection of disjoint Cartier divisors $\Zc_j$ of relative degree 1 ($j=1,\dots,\nf_R$), contained in $\U$. They are called   \emph{RR-punctures}. 
\item An epimorphism $\bar\delta\colon \Omega_f\to \omega_f(\Zc)$, where $\Zc=\sum \Zc_j$, satisfying the conditions of Definition \ref{def:SUSYCM}.
\end{enumerate}
Moreover, these data have to fulfil the following condition: if for every bosonic fibre $X_s$ of $f\colon \Xcal \to \Sc$ we write $x_{s,i}=X_i\cap X_s$ and $z_{s,j}=Z_j\cap X_s$,  then the pair $(X_s, D_s)$ with $D_s= \{x_{s,1}\dots x_{s,\nf_{NS}},z_{s,1}\dots, z_{s,\nf_{R}}\}$ is a stable (resp. pre-stable) $(\nf_{NS}+\nf_R)$-pointed curve of arithmetic genus $g$.
\end{defin}

{
\begin{defin}\label{def:prestmorphs} Let $(f\colon\Xcal\to\Sc, \{\Xcal_i\},\{\Zc_j\}, \bar\delta)$ and $(f\colon\Xcal'\to\Sc, \{\Xcal'_i\},\{\Zc'_j\}, \bar\delta)$ be pre-stable SUSY curves over $\Sc$. A morphism between them is a morphism of RR-SUSY curves $f\colon \Xcal\to\Xcal'$ over $\Sc$ (\cite[Def.\,3.9]{BrHR21}) preserving the NS-punctures, that is, $f(\{\Xcal_i\})\subseteq\{\Xcal'_i\}$.
\end{defin}
\begin{remark} According to \cite[Def.\,3.9]{BrHR21}, the number of RR-punctures of the two curves  must be the same. Regarding the NS-punctures it may be sensible to impose that $f$ maps $\{\Xcal_i\}$ surjectively onto $\{\Xcal'_i\}$; in such a case, one has $\nf_{NS}\geq\nf'_{NS}$.
\end{remark}
}
The moduli of complex stable supercurves has the structure of a Deligne-Mumford superstack, that is, it is a DM-stack on the category of superschemes (see \cite{CodViv17} for the  theory of superstacks). The precise result is:

\begin{thm}[\cite{FKP20}]\label{thm:moduli}
There is a proper and smooth DM-superstack $\bar\Mf_{g,\nf_{NS},\nf_R}$ over $\C$ representing the functor of stable supercurves of arithmetic genus $g$ with $\nf_{NS}$ NS-punctures and $\nf_R$ RR punctures.
\qed
\end{thm}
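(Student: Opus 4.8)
The plan is to follow, in the super category, the classical construction of $\overline{\mathcal M}_{g,n}$ and of the moduli of spin curves, taking as inputs the representability of \emph{smooth} SUSY curves by an algebraic superspace $\Xf_{\nf_{NS},\nf_R}$ (quoted above from \cite{BrHR21}), the superstack formalism of \cite{CodViv17}, and the faithfully flat descent package of Section \ref{s:supergeom}. First I would verify that the moduli functor sending a superscheme $\Sc$ to the groupoid of tuples $(f\colon\Xcal\to\Sc,\{\Xcal_i\},\{\Zc_j\},\bar\delta)$ as in Definition \ref{def:stablecurve}, with morphisms as in Definition \ref{def:prestmorphs}, is a stack for the étale topology of superschemes. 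This is where Propositions \ref{prop:morphdescent}, \ref{prop:moddescent} and \ref{prop:subdescent} are used: a proper CM supercurve, the NS-punctures (disjoint closed sub-superschemes), the RR-divisors (relative Cartier divisors), and the epimorphism $\bar\delta$ together with the isomorphism $\ker\bar\delta\simeq\omega_f^{\otimes2}(\Zc)$ all descend along étale — hence faithfully flat quasi-compact — covers, and properness, flatness and the CM condition descend by the local-on-the-base statements already established.

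The second and main step is algebraicity, i.e.\ producing a smooth atlas. The geometric point is that, over an ordinary base $S$, a stable SUSY curve has bosonic reduction described by Equation \eqref{eq:scurveproj} and, through condition (2') (Equation \eqref{eq:susyR2}), amounts to a stable $(\nf_{NS}+\nf_R)$-pointed curve $(X,D)$ together with a sheaf $\Lcl$ satisfying $\Lcl^{\otimes2}\simeq\omega_{f_{bos}}(Z)$ — that is, a spin structure relative to the Ramond divisor. Hence the bosonic reduction of the sought superstack maps to $\overline{\mathcal M}_{g,\nf_{NS}+\nf_R}$ and is (a union of components of) a twisted moduli of spin curves, which is a proper smooth Deligne--Mumford stack over $\C$ by the classical theory. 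I would then realize the full superstack as a super-thickening of this bosonic stack: over the locus of smooth curves one uses the atlas supplied by $\Xf_{\nf_{NS},\nf_R}$, while near a node one writes the standard versal family of a nodal RR-SUSY curve — one even smoothing parameter per node and, at a Ramond node, the prescribed odd parameter — and checks via Proposition \ref{prop:CM} that the total family is CM through the degeneration and smooth over its base. Gluing these local models produces a smooth atlas, so the moduli functor is an algebraic superstack.

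Properness and separatedness then reduce, by Proposition \ref{prop:proper} and the valuative criteria noted after it, to statements about the underlying ordinary traits. Given a stable SUSY curve over the punctured spectrum of a discrete valuation ring, one extends the bosonic stable pointed curve by the stable reduction theorem (after a finite base change), extends the spin datum $\Lcl$ by properness of the moduli of spin curves, and extends $\bar\delta$ because the quotient $\Omega_f\twoheadrightarrow\omega_f(\Zc)$ and the isomorphism defining a SUSY structure are closed conditions; uniqueness of the limit comes from separatedness, again reduced to the classical case. Smoothness is the infinitesimal lifting criterion: for a surjection of local Artinian $\C$-superalgebras with nilpotent kernel one lifts the bosonic stable pointed curve (unobstructed since $\overline{\mathcal M}_{g,\nf_{NS}+\nf_R}$ is smooth), lifts $\Lcl$ (obstruction in $H^2(X,\Oc_X)=0$), and lifts $\bar\delta$, whose obstruction lies in an $\Ext^2$/hypercohomology group built from sheaves on the superdimension-$1|1$ fibre and therefore vanishes. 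Finally the superstack is Deligne--Mumford because a stable SUSY curve has finite, unramified automorphisms: the even part maps faithfully to automorphisms of the stable pointed curve preserving $(\Lcl,\bar\delta)$, a finite group by classical stability (condition (3)), and the infinitesimal automorphisms are governed by the $H^0$ of the sheaf of SUSY-preserving odd vector fields, which vanishes for a stable curve.

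The hard part is the local analysis at the nodes. In contrast with the classical situation, one has to produce the correct versal deformation of a nodal RR-SUSY curve — distinguishing Neveu--Schwarz from Ramond nodes, and controlling how the superconformal datum $\bar\delta$ degenerates — and verify that the CM criterion $\Extsh^i_{\Oc_X}(\Lcl,\omega_{f_{bos}})=0$ of Proposition \ref{prop:CM} persists uniformly in the family, before gluing these nodal charts to the smooth-locus atlas into a single smooth atlas with the odd directions kept under control. This is exactly the construction carried out in \cite{MoZh19} and \cite{FKP20}, which the argument above follows.
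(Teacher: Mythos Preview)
The paper does not prove this theorem: the statement is immediately followed by \qed and attributed to \cite{MoZh19,FKP20}, so there is no in-paper argument to compare your proposal against. Your outline is a reasonable summary of the strategy carried out in those references --- stack axioms via descent, an atlas built from the bosonic spin-curve moduli together with local nodal models, properness and separatedness reduced to the bosonic stable-reduction theorem, smoothness via vanishing of obstructions, and finiteness of automorphisms for the DM property --- and you yourself note in the final paragraph that you are recapitulating \cite{MoZh19,FKP20}. As far as the present paper is concerned, the theorem is simply quoted, not proved, so there is nothing further to review.
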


\section{Stable supermaps}\label{s:stsupermaps}
\subsection{Definitions}

We would like to have a definition of stable supermaps with values in a superscheme $\Ycal$, which, when $\Ycal$ is an ordinary single point, coincides with the notion of a stable SUSY-curve with punctures. We fix an algebraically closed field $k$ and only consider superschemes over $k$.

Let $\Ycal$ be a proper smooth superscheme and $\beta\in A_1(\Ycal)$ a rational equivalence class of  supercycles.
{
\begin{defin}\label{def:stablemap} A stable supermap  into $\Ycal$ of class $\beta$  over a superscheme $\Sc$, with $\nf_{NS}$ NS-punctures and $\nf_R$ RR-punctures is given by the following data:
\begin{enumerate}
\item A pre-stable SUSY-curve $(f\colon\Xcal\to\Sc, \{\Xcal_i\},\{\Zc_j\}, \bar\delta)$ over $\Sc$ with $\nf_{NS}$ NS-punctures and $\nf_R$ RR-punctures (Definition \ref{def:stablecurve}).
\item A superscheme morphism $\phi\colon\Xcal\to \Ycal$ such that $\phi_\ast[\Xcal_s]=\beta$ for every closed point $s\in S$ (cf.~Proposition \ref{prop:push}).
\item For every geometric point $s\in S$, if $\tilde X'_s$ is a component of the normalization $\pi\colon \tilde X_s\to X_s$ of the bosonic fibre $X_s$ which is contracted by $\phi\circ\pi$ to a single point, then 
\begin{enumerate}
\item if $\tilde X'_s$ is rational,   it contains at least three points from $\pi^{-1}(D_s\cup X'_{s,sing})$, where $D_s= \{x_{s,1}\dots x_{s,\nf_{NS}},z_{s,1}\dots, z_{s,\nf_{R}}\}$ with $x_{s,i}=X_i\cap X_s$ and $z_{s,j}=Z_j\cap X_s$.
\item if $\tilde X'_s$ has genus 1,   it contains at least one such point.
\end{enumerate}
\end{enumerate}
\end{defin}
}

\begin{remark} The supercycle class $\phi_\ast[\Xcal_s]$ is defined even if $\phi$ is not proper, because the restriction of $\phi$ to $\Xcal_s$ is automatically proper.
\end{remark}
\begin{remark}
When $\Ycal$ is a single point, the unique morphism $\phi$ to $\Ycal$ is the natural projection and  $\beta$ has to be zero, so that  the second condition is automatically fulfilled and stable supermaps into a point are the same thing as stable SUSY-curves. 
\end{remark}
\begin{remark} This definition is different from the one given in \cite[Def. 3.1]{AdGr20} because  there the fibres of $\Xcal\to\Sc$ are assumed to be bosonic. With that definition   stable supermaps over a single point are merely the ordinary stable curves and the superstructure is lost.
\end{remark}

Our aim is to prove an analogue of  Theorem \ref{thm:moduli} for stable supermaps.

Let $\Ycal$ be a superscheme; we fix non-negative integers $g$, $\nf_{NS}$ and $\nf_R$ and a cycle $\beta\in A_1(\Ycal)$.  
Denote by $\Sf$ the category of superschemes. 
\begin{defin}\label{def:CFGsmaps} The category fibred in groupoids (CFG) 
$$
\Sf\Mf_{g,\nf_{NS},\nf_R}(\Ycal,\beta) \xrightarrow{p} \Sf
$$
of $\beta$-valued stable supermaps of arithmetic genus $g$ with $\nf_{NS}$ NS-punctures and $\nf_R$ RR-punctures into $\Ycal$,
is given by the following data: \
\begin{itemize}\item Objects are $\beta$-valued stable supermaps $\Xf=((f\colon\Xcal\to\Sc, \{\Xcal_i\},\{\Zc_j\}, \bar\delta), \phi)$ \footnote{We use a simplified notation here, some of the relevant  data are left implicit.} over a superscheme $\Sc$ into $\Ycal$ of arithmetic genus $g$, with $\nf_{NS}$ NS-punctures, $\nf_R$ RR-punctures (Definition~\ref{def:stablemap}). 
\item Morphisms are cartesian diagrams
\begin{equation}\label{eq:cartesian}
\xymatrix{
\Xf'\ar[r]^\Xi\ar[d]_{f'} & \Xf \ar[d]^f\\
\Tc\ar[r]^\xi & \Sc
}
\end{equation}
that is, diagrams inducing an isomorphism $\Xcal'\iso \xi^\ast \Xcal:=\Xcal\times_{\Sc}\Tc$ of superschemes over $\Tc$ compatible with the punctures and the morphisms $\bar\delta$ and such that $\phi'=\phi\circ\Xi$. 
The functor $p$  maps a stable supermap to the base superscheme  and the morphism $\Xi$ to the base morphism $\xi$. The pullback $\xi^\ast\Xf\to\Tc$ is given by the fibre product $f_\Tc\colon\Xcal\times_{\Sc}\Tc\to\Tc$ and the fibre products of the data $\{\Xcal_i\},\{\Zc_j\}, \bar\delta$\footnote{The fibre product of $\delta$ is a derivation of the same kind due to Proposition \ref{prop:bcCM}.}(thus providing  a natural ``cleavage''). \end{itemize}
\end{defin}

As it is customary, for every superscheme $\Sc$ we denote by $\Sf\Mf_{g,\nf_{NS},\nf_R}(\Ycal,\beta)(\Sc)$, and call it the \emph{fibre of $\Sf\Mf_{g,\nf_{NS},\nf_R}(\Ycal,\beta)$ over $\Sc$},  the category whose objects are stable supermaps over $\Sc$ and whose morphisms $\Xi$ lie over the identity $\xi=\Id$.

Since \'etale descent data for stable supermaps are effective and descent data for morphims to $\Ycal$ are effective as well, we easily see that descent data for $\Sf\Mf_{g,\nf_{NS},\nf_R}(\Ycal,\beta)$ are effective. One easily checks that the isomorphisms between two objects of $\Sf\Mf_{g,\nf_{NS},\nf_R}(\Ycal,\beta)$ form a sheaf in the \'etale topology of superschemes, so that the CFG $\Sf\Mf_{g,\nf_{NS},\nf_R}(\Ycal,\beta)$ is a \emph{superstack}.

\section{SUSY Nori motives}\label{s:Norimotives}

\subsection{Nori geometric categories}\label{ss:Norigeomcats}

The basic combinatorial objects in the theory of Nori motives are categories of diagrams \cite{MaMar20}. In \cite{BoMa08} and \cite{Ma99} a slightly changed formalism refers to graphs, and we will use the formalism of \cite{BoMa08, Ma99} rather than \cite{MaMar20}.

An object of such a category, a graph $\tau$, is a family of structures  $(F_\tau,V_\tau,\partial_\tau,j_\tau)$, just sets or structured sets and maps between them in the simplest cases. Elements of $F_\tau$, resp.\,$V_\tau$, are called \emph{flags}, resp.\,\emph{vertices} of $\tau$. The map $\partial_\tau\colon F_\tau \to V_\tau$ associates to each flag a vertex, called its \emph{boundary}. The map $j_\tau\colon F_\tau \to F_\tau$ must satisfy the condition $j_\tau^2 = \Id$, the indentity map of $F_\tau$ to itself. Pairs of flags $(f,j_\tau f)$ are called \emph{edges}, connecting boundary vertices of these two flags.

A morphism of graphs $\sigma \to \tau$ consists of two maps $F_\sigma \to F_\tau$, $V_\sigma\to V_\tau$ compatible with the $\partial$'s and $j$'s. 

To each (small) category $\Cc$ one can associate its graph $D(\Cc)$, whose vertices are objects of $\Cc$ and flags are morphisms $\ast\to X$ and $X \to\ast$, where $\ast$ runs over all objects of $\Cc$. According to \cite[Def.\, 0.1.1]{MaMar20}, edges, oriented from $X$ to $Y$, are represented by diagrams $X \to Z \to Y$, that is, by decompositions of morphisms from $X$ to $Y$, presented as a product of two morphisms.

Functors between two categories produce morphisms between their diagrams.

\subsection{SUSY Nori geometric categories}\label{ss:Noricats}

As in \cite[0.1-0.6]{MaMar20},  we need to chose  a ``geometric'' category $\Cc$ of superschemes/superstacks/supermaps/..., additionally endowed with classes of morphisms of ``closed embeddings'' $Y\to X$ and complements to closed embeddings $X \setminus Y\to X$, satisfying certain restrictions.

For us, roughly speaking, such $\Cc$ is a category of stable supermaps (Definition \ref{def:stablemap}). 
If we go to a more detailed definition, at least  two possibilitites arise.

\subsubsection{First possibility}
The objects of this category are the same of the objects of the various categories $\Sf\Mf_{g,\nf_{NS},\nf_R}(\Ycal,\beta)$ of Definition \ref{def:CFGsmaps}, but we have to relax the definition of the relevant morphisms. For technical reasons we always assume that the superschemes $\Ycal$ are smooth and proper.

\begin{defin}\label{def:stablemapsmorphism} Let $\Ycal$, $\Ycal'$ be two proper and smooth superschemes and $\beta\in A_1(\Ycal)$, $\beta'\in A_1(\Ycal')$. Let 
$F :=((f\colon\Xcal\to\Sc, \{\Xcal_i\},\{\Zc_j\}, \bar\delta),\phi\colon \Xcal\to\Ycal, \beta)$ and 
$F':=((f'\colon\Xcal'\to\Sc, \{\Xcal'_i\},\{\Zc'_j\}, \bar\delta'),\phi'\colon \Xcal'\to\Ycal', \beta')$
be  stable supermaps over $\Sc$ (where the genus and the number of punctures of one and the other are allowed to be different). A morphism $\mathbf{\Phi}\colon F \to F'$ is a pair $\mathbf{\Phi}=(g, \psi)$ where 
\begin{enumerate}
\item $g\colon \Xcal \to \Xcal'$ is a morphism of punctured RR-SUSY curves over $\Sc$ \cite[Def.\,3.9 ]{BrHR21} preserving the NS punctures (that is, $g( \{\Xcal_i\} )\subseteq \{\Xcal'_i\})$.
\item $\psi\colon \Ycal \to \Ycal'$ is a morphism of superschemes such that $\psi\circ\phi= \phi'\circ g$ and $\psi_\ast\beta=\beta'$ 
(notice also that $\psi$ is automatically proper).
\end{enumerate}
Here we use a slight generalization of \cite[Def.\,3.9 ]{BrHR21} to allow the number of RR punctures of the two SUSY curves to be different, namely, we only impose that $g( \{\Zc_j\} )\subseteq \{\Zc'_j\})$.
\end{defin}

It is clear from the definition that the composition of two morphisms of stable supermaps, defined as $(\bar g,\bar\psi)\circ (g, \psi)=(\bar g \circ g, \bar\psi\circ\psi)$ is   morphism of  stable supermaps.

One then has a category for every superscheme $\Sc$, the category $\Sf\Mf(\Sc)$ of stable supermaps over $\Sc$. It is naturally fibered
$$
q\colon\Sf\Mf(\Sc) \to \Sf_1
$$
over the category $\Sf_1 $ whose objects are pairs $(\Ycal,\beta)$, where $\Ycal$ is a proper smooth superscheme and $\beta\in A_1(\Ycal)$, with morphisms $(\Ycal,\beta)\to (\Ycal',\beta')$ given by superscheme morphisms $\psi\colon \Ycal\to\Ycal'$ (automatically proper) such that $\psi_\ast\beta=\beta'$. 

For every superscheme morphism $\xi\colon \Tc \to \Sc$ there is a  pullback    functor
$$
\xi^\ast\colon \Sf\Mf(\Sc) \to \Sf\Mf(\Tc)
$$
compatible with the fibration functors to $\Sf_1$. One can see that  for every superscheme $\Sc$, the functor $q\colon\Sf\Mf(\Sc) \to \Sf_1$ is a CFG and that for every base-change $\xi\colon \Tc \to \Sc$ the functor $\xi^\ast$ is cocartesian (see \cite[Thm.\,3.6]{BeMa96}).

To conclude this part we give the definition of morphism of stable supermaps over different base superschemes. 
\begin{defin}\label{def:stmpmor}
If $F =((f\colon\Xcal\to\Sc, \{\Xcal_i\},\{\Zc_j\}, \bar\delta),\phi\colon \Xcal\to\Ycal,\beta)$ and 
$F'=((f'\colon\Xcal'\to\Sc', \{\Xcal'_i\},\{\Zc'_j\}, \bar\delta',\beta'),\phi'\colon \Xcal'\to\Ycal')$ are stable supermaps, a morphism $\mathbf{\Psi}\colon\F \to F'$ is a pair $(\xi,\mathbf{\Phi})$ where $\xi\colon \Sc \to \Sc'$ is a superscheme morphism and $\mathbf{\Phi}\colon F \to \xi^\ast F'$ is a morphism of stable supermaps over $\Sc$.
\end{defin}

Such a morphism is determined by a morphism $g\colon \Xcal \to \Xcal'$ covering $\xi$ ($f'\circ g=\xi\circ f$) that preserves the SUSY structure and the punctures, and a morphism $\psi\colon \Ycal\to \Ycal'$ such that $\psi\beta=\beta'$ and $\psi\circ\phi=\phi'$. As in the case of stable supermaps over the same base superscheme, the composition of two morphisms of stable supermaps is a morphism of stable supermaps; more precisely, if $F$, $F'$ and $F''$ are stable supermaps over superschemes $\Sc$, $\Sc'$ and $\Sc''$, respectively, and $(\xi,\mathbf{\Phi})\colon F \to F'$, $(\xi',\mathbf{\Phi'})\colon F'\to F''$ are morphisms, the composition of them $(\xi,\mathbf{\Phi})\circ (\xi',\mathbf{\Phi'})$ is the morphism of stable supermaps given by $(\xi\circ\xi',\mathbf{\Phi}\circ \mathbf{\Phi'})$.

\subsubsection{Second possibility}\label{sss:Noricats}
 
Now we start with a pre-stable supercurve $(f\colon\Xcal\to\Sc, \{\Xcal_i\},\{\Zc_j\}, \bar\delta)$ as in Definition \ref{def:stablecurve}, and a superscheme morphism $\phi\colon \Xcal \to \Ycal$, where $\Ycal$ is proper and smooth. We also fix a class $\beta\in A_1(\Ycal)$, but  without imposing conditions on the push-forward under $\phi$ of the fundamental cycles of the fibres.

\begin{defin}\label{def:betagood} We say that a subsuperscheme\footnote{Since we are considering only locally noetherian superschemes, such immersions are locally closed for the Zariski topology.}  $\Sc'\hookrightarrow \Sc$ is $\beta$-good  if the restriction of $(f\colon\Xcal\to\Sc, \{\Xcal_i\},\{\Zc_j\}, \bar\delta)$ and $\phi$ to $\Sc'$ is a stable supermap of class $\beta$ (Definition \ref{def:stablemap}). In particular, one has $\phi_\ast[\Xcal_s]=\beta$ for every geometric point $s'\in \Sc'$.
\end{defin}
We consider the  category $\Sf_\beta(\Sc)$ whose objects are the $\beta$-good subsuperschemes of $\Sc$ and whose morphisms are the natural embeddings (when they exist). $\Sf_\beta(\Sc)$ is a directed category to $\Sc$, with the property that given morphisms $\Sc_1\hookrightarrow \Sc_2\hookrightarrow\Sc_3$ and $\Sc_1\hookrightarrow \Sc'_2\hookrightarrow\Sc_3$,
the compositions coincide as both are the embedding of $\Sc_1$ into $\Sc_3$. Moreover, the disjoint  union of two  $\beta$-good subsuperschemes is also $\beta$-good. Thus $\Sf_\beta(\Sc)$ is a monoidal category.
 
\begin{remark}
This definition is very rigid. We may allow automorphisms of the class $\beta$, that is, automorphisms $\psi\colon\Ycal\to\Ycal$ such that $\psi_\ast\beta=\beta$, or automorphisms of the pre-stable curve $f_{\Sc'}\colon\Xcal_{\Sc'}\to\Sc'$ for any $\beta$-good subsuperscheme $\Sc'$ of $\Sc$.  In this case, the definition of morphisms in the category $\Sf_\beta(\Sc)$ is similar to that in Definition \ref{def:stablemapsmorphism}.
\end{remark}

\subsection{SUSY Nori closed embeddings}\label{sss:Noriclosed}

Again, there are  different possible definitions of Nori closed embeddings for stable supermaps. 

We write some of them according to the types of SUSY Nori geometric categories described in Section \ref{ss:Noricats}.

\subsubsection{First possibility (type 1)}

One possible choice is the following.
\begin{defin}\label{def:Noriclosed1} Let $F =((f\colon\Xcal\to\Sc, \{\Xcal_i\},\{\Zc_j\}, \bar\delta),\phi\colon \Xcal\to\Ycal, \beta)$ and 
$F'=((f'\colon\Xcal'\to\Sc', \{\Xcal'_i\},\{\Zc'_j\}, \bar\delta'),\phi'\colon \Xcal'\to\Ycal',\beta')$ be stable supermaps. A morphism $\mathbf{\Psi}=(\xi,\mathbf{\Phi})\colon F \to F'$ is a Nori closed embedding (of type 1) if the involved morphisms $\xi\colon \Sc\to\Sc'$, $g\colon \Xcal \to \Xcal'$ and $\psi\colon\Ycal\to\Ycal'$ are closed immersions of superschemes. We write $\mathbf{\Psi}\colon F \hookrightarrow F'$ or $ F \subset F'$ when the morphisms are understood.
\end{defin}

Note that if $\mathbf{\Psi}\colon F \hookrightarrow F'$ is a closed embedding of stable supermaps, the number of NS-punctures and  RR-punctures of $f\colon\Xcal \to\Sc$ must be smaller that the corresponding numbers of punctures of $f'\colon\Xcal' \to\Sc'$, that is, $\nf_{NS}\leq \nf'_{NS}$, $\nf_R\leq \nf'_R$. Moreover, for every geometric point $s\in \Sc$ the fibre $\Xcal_s$ is isomorphically mapped to a subcurve of $\Xcal'_{\xi(s)}$ consisting of some of its irreducible components.

\begin{example} If $F'=((f'\colon\Xcal'\to\Sc', \{\Xcal'_i\},\{\Zc'_j\}, \bar\delta',\beta'),\phi'\colon \Xcal'\to\Ycal')$ is a stable map over $\Sc'$ and and $\xi\colon \Sc\hookrightarrow \Sc'$ is a closed immersion, the natural base change morphism $\xi^\ast F' \to F'$ is a closed embedding.
\end{example} 

\begin{example} 
Given a stable map $F =((f\colon\Xcal\to\Sc, \{\Xcal_i\},\{\Zc_j\}, \bar\delta),\phi\colon \Xcal\to\Ycal,\beta)$, every morphism $\psi\colon \Ycal \to \Ycal'$ gives rise to a push-forward stable map $\psi_\ast F =((f\colon\Xcal\to\Sc, \{\Xcal_i\},\{\Zc_j\}, \bar\delta),\psi\circ \phi\colon \Xcal\to\Ycal', \psi_\ast\beta)$. When $\psi$ is a closed embedding,  the induced morphism $F \to \psi_\ast F$ is a closed embedding as well.
\end{example}

\begin{example} Let $F =((\Xcal, \{x_i\},\{\Zc_j\}, \bar\delta),\phi\colon \Xcal\to\Ycal,\beta)$ and $F =((\Xcal', \{x'_i\},\{\Zc'_j\}, \bar\delta'),\break \phi\colon \Xcal'\to\Ycal,\beta)$ be stable maps over a single (geometric) point and $F \hookrightarrow F'$ a closed embedding, where the corresponding morphism $\psi\colon \Ycal \to \Ycal$ is the identity. Then $g\colon \Xcal\hookrightarrow\Xcal'$ is a closed embedding which identifies $\Xcal$ with a union of some irreducible components of $\Xcal'$. The condition $\phi_\ast[\Xcal']=\beta = \phi_\ast[\Xcal]$ implies that $\phi$ contracts the superschematic closure of $\Xcal'\setminus \Xcal$ to points.
\end{example}

{\subsubsection{Type 2}
In this case we fix a superscheme $\Sc$,  a pre-stable supercurve $f\colon \Xcal \to \Sc$  and a superscheme morphism $\phi\colon \Xcal \to \Ycal$, where $\Ycal$ is proper and smooth. We also fix a class $\beta\in A_1(\Ycal)$. Our category is now is the category $\Sf_\beta(\Sc)$ of Subsection \ref{sss:Noricats}.}
{
\begin{defin}\label{def:Noriclosed2} A Nori closed embedding (of type 2) is simply a closed embedding of $\beta$-good subsuperschemes of $\Sc$ (Definition \ref{def:betagood}).
\end{defin}
}

\subsection{SUSY Nori motives} The last intermediate step in the construction of Nori motives of a geometric category $\Cc$ consists in the introduction of the graphs of effective pairs \cite[Def.\,9.1.1]{HuM-St17}.

We describe SUSY Nori diagram of effective pairs \cite[Ch.\,9]{HuM-St17}, \cite{MaMar20} for the two types of Nori closed embeddings of Subsection \ref{sss:Noriclosed}.

\subsubsection{Type 1}\label{sss:type1} In this case, a SUSY Nori diagram has the following structure.

\begin{enumerate}
\item
 Vertices are given by pairs $(\mathbf{\Psi},i)$, where $\mathbf{\Psi}\colon F\to F'$
is a morphism of stable supermaps which is a closed embedding according to Definition \ref{def:Noriclosed1},
and $i$ is an integer number. We also write $(F',F,\mathbf{\Psi},i)$ for $(\mathbf{\Psi},i)$.
\item
Given vertices  $(F',F,\mathbf{\Psi},i)$ and $(\bar F',\bar F,\mathbf{\bar\Psi},i)$, a pair of closed embeddings $\mathbf{\Lambda}\colon F \hookrightarrow \bar F$, $\mathbf{\Lambda'}\colon F' \hookrightarrow \bar F'$ (in the sense of Definition \ref{def:Noriclosed1}) such that the diagram
$$\xymatrix{ 
F \ar@{^{(}->}[r]^{\mathbf{\Psi}}\ar@{^{(}->}[d]^{\mathbf{\Lambda}} & F' \ar@{^{(}->}[d] ^{\mathbf{\Lambda'}} \\
\bar F \ar@{^{(}->}[r]^{\mathbf{\bar\Psi}} & \bar F'
}
$$
commutes, produces an edge
$$
(\mathbf{\Lambda'},\mathbf{\Lambda})^\ast\colon (\bar F',\bar F,\mathbf{\bar\Psi},i)\to (F',F,\mathbf{\Psi},i)\,.
$$
\item
For every chain $F\xrightarrow{\mathbf{\Psi}} F' \xrightarrow{\mathbf{\Psi'}} F''$ of Nori closed embeddings there is an edge
$$
\partial\colon (F',F,\mathbf{\Psi},i)\to (F'',F',\mathbf{\Psi'},i+1)\,.
$$
\end{enumerate}

\subsubsection{Type 2} Let $(f\colon\Xcal\to\Sc, \{\Xcal_i\},\{\Zc_j\}, \bar\delta)$ be a pre-stable supercurve, $\phi\colon \Xcal \to \Ycal$ a superscheme morphism, where $\Ycal$ is proper and smooth, and $\beta\in A_1(\Ycal)$. We shorten these data as $F_{\Sc,f,\phi,\beta}$.
\begin{defin}\label{def:Noridiag2} The SUSY Nori diagram $D_{\mbox{\rm \tiny eff}}(F_{\Sc,f,\phi,\beta})$ of effective pairs for $F_{\Sc,f,\phi,\beta}$ is given by:
\begin{enumerate}
\item A vertex is a triple $(\Sc_1,\Sc_2,i)$ where $\Sc_2\hookrightarrow\Sc_1$ is closed embedding of $\beta$-good subsuperschemes of $\Sc$ and $i$ is a non-negative integer number.
\item There are two types of edges (besides the obvious identities):
\begin{enumerate}
\item Given two vertices $(\Sc_1,\Sc_2,i)$, $(\Sc'_1,\Sc'_2,i)$, an immersion $j\colon\Sc'_1\hookrightarrow \Sc_1$ of subsuperschemes of $\Sc$ that induces an immersion $j_{|\Sc'_2}\colon \Sc'_2 \hookrightarrow \Sc_2$, produces an edge
$$
h^\ast\colon(\Sc_1,\Sc_2,i)\to (\Sc'_1,\Sc'_2,i)\,.
$$
\item For every chain $\Sc_3\hookrightarrow\Sc_2\hookrightarrow\Sc_1$ of $\beta$-good subsuperschemes of $\Sc$, there is an edge
$$
\partial\colon (\Sc_2,\Sc_3,i) \to (\Sc_1,\Sc_2,i+1)\,.
$$
\end{enumerate}
\end{enumerate}
\end{defin}
 For the next step we need to have a representation $T$ of $D_{\mbox{\rm \tiny eff}}(F_{\Sc,f,\phi,\beta})$ into an appropriate abelian category in the sense of  \cite[Def. 7.1.4]{HuM-St17}. This could be the category of modules over a commutative ring $A$ or over a superring $\As$ \cite[2.1]{BrHR21}. Assuming that   such a representation is given, one can define:
\begin{defin}\label{def:effNori} The category of effective mixed SUSY Nori motives of $F_{\Sc,f,\phi,\beta}$ with respect to the representation $T$ is the diagram category $\M\M_{SUSY-Nori}^{\mbox{\rm \tiny eff}}=\Cc(D_{\mbox{\rm \tiny eff}}(F_{\Sc,f,\phi,\beta}),T)$ (see \cite[Def.\,7.1.10]{HuM-St17}).
\end{defin}


\subsubsection{Effective mixed SUSY Nori motives}\label{ss:effNori}

We now    build some   representations of SUSY Nori diagrams in an abelian category in the sense of \cite[Def. 7.1.4]{HuM-St17},  thus constructing concrete examples of effective SUSY mixed Nori motives.

\begin{example} [Topological effective Type 1  mixed SUSY Nori motives]\label{ex:effNori1}
 Let $F =((f:\Xcal\to\Sc, \{\Xcal_i\},\{\Zc_j\}, \bar\delta),\phi: \Xcal\to\Ycal, \beta)$ and 
$F'=((f':\Xcal'\to\Sc', \{\Xcal'_i\}, \{\Zc'_j\}, \bar\delta'),\phi': \Xcal'\to\Ycal',\beta')$ be stable supermaps, and let $\mathbf{\Psi}\colon F\to F'$ a closed embedding (Definition \ref{def:Noriclosed1}). Recall that $\mathbf{\Psi}$ is determined by closed immersions of superschemes $\xi\colon \Sc\hookrightarrow \Sc'$, $g\colon \Xcal \hookrightarrow  \Xcal'$ and $\psi\colon\Ycal\hookrightarrow \Ycal'$ such that $f'\circ g=\xi\circ f$, $\psi_\ast\beta=\beta'$, $\psi\circ\phi=\phi'$ and $g$ preserves the SUSY structure and the punctures. 

 We have an open immersion $\iota\colon V=X'/g(X)\hookrightarrow X'$. We define a representation $T$ in the category of abelian groups by setting
$$
T(F',F,\mathbf{\Psi},i)=\Hs^i(\bR \phi'_\ast(\iota_! \Z_{V})) 
$$
for the vertex $(F',F,\mathbf{\Psi},i)$.

Now, consider   vertices $(F',F,\mathbf{\Psi},i)$, $(\bar F',\bar F,\mathbf{\bar\Psi},i)$ and a pair of closed embeddings $\mathbf{\Lambda}\colon F \hookrightarrow \bar F$, $\mathbf{\Lambda'}\colon F' \hookrightarrow \bar F'$ such that the diagram
$$\xymatrix{ 
F \ar@{^{(}->}[r]^{\mathbf{\Psi}}\ar@{^{(}->}[d]^{\mathbf{\Lambda}} & F' \ar@{^{(}->}[d] ^{\mathbf{\Lambda'}} \\
\bar F \ar@{^{(}->}[r]^{\mathbf{\bar\Psi}} & \bar F'
}
$$
commutes, as in (3) of  \ref{sss:type1}.
As we have said, $\mathbf{\Psi}$ is determined by closed immersions $\xi$, $g$ and $\psi$; we denote respectively with bars, primes, and bars together with primes,  the corresponding associated morphisms to $\mathbf{\Lambda}$, $\mathbf{\Lambda'}$ and $\mathbf{\bar\Psi}$, respectively. Then, if $\bar\iota\colon \bar V=\bar X'/\bar g'(\bar X)\hookrightarrow \bar X'$ is the open immersion associated with $\bar g'$, we  have a commutative diagram
$$
\xymatrix{
0\ar[r]& \bar\iota_!  \Z_{\bar V} \ar[r] & \Z_{\bar X'} \ar[d]\ar[r] & \bar g'_\ast\Z_{\bar X}\ar[r]\ar[d] & 0 \\
0\ar[r] & g'_\ast \iota_!  \Z_{V} \ar[r] & g'_\ast \Z_{X'} \ar[r] & g'_\ast g_\ast \Z_{X} \ar[r] &0
}
$$
since $\bar g'\circ \bar g=g'\circ g$. Thus, we have a morphism $\bar\iota_!  \Z_{\bar V'} \to g'_\ast \iota_!  \Z_{V'}$ and then a morphism $\bR \bar\phi' \colon (\bar\iota_!  \Z_{\bar V}) \to \bR \bar\phi' (g'_\ast \iota_!  \Z_{V})=\psi'_\ast \bR \phi'_\ast (\iota_!  \Z_{V})$. Taking hypercohomology we get a morphism
$$
T(F',F,\mathbf{\Psi},i) \to T(\bar F',\bar F,\mathbf{\bar \Psi},i)\,.
$$
We have just proved that $T$ associates a morphism to any edge as in (2) of~\ref{sss:type1}. Regarding edges as in (3) of \ref{sss:type1}, consider a chain $F\xrightarrow{\mathbf{\Psi}} F' \xrightarrow{\mathbf{\Psi'}} F''$ of Nori closed embeddings. Denote with primes the morphisms corresponding to $\xi$, $g$ and $\psi$ for $\mathbf{\Psi'}$ and write $\iota\colon V=X'/g(X)\hookrightarrow X'$, $\iota'\colon X''/g'(X') \hookrightarrow X''$ and $\varpi\colon U=X''/(g'\circ g)(X)\hookrightarrow X''$. We have a commutative diagram
$$
\xymatrix@R=10pt{
&& & 0\ar[d] & \\
&& & g'_\ast\iota_!\Z_{V'} \ar[d] & \\
0\ar[r] & \iota'_! \Z_{V'} \ar[r]\ar[d] & \Z_{X''} \ar[r] \ar@{=}[d] & g'_\ast \Z_{X''} \ar[r]\ar[d] & 0 \\
0\ar[r] & \varpi_! \Z_U \ar[r] & \Z_{X''} \ar[r] & g'_\ast g_\ast \Z_X\ar[r]\ar[d] & 0 \\
&& & 0 &
}
$$
so that there is an exact sequence
$0 \to \iota'_! \Z_{V'} \to \varpi_! \Z_U \to g'_\ast\iota_!\Z_{V'} \to 0$. Thus, we have an exact triangle
$$
\bR \phi^{''}_\ast \iota'_! \Z_{V'} \to \bR \phi^{''}_\ast\varpi_! \Z_U \to \bR \phi^{''}_\ast g'_\ast\iota_!\Z_{V'}=\psi_\ast\bR \phi'_\ast \iota_!\Z_{V'} \to \bR \phi^{''}_\ast \iota'_! \Z_{V'}[1]\,,
$$
and then a morphism
\begin{multline*}
T(F',F,i)=\Hs^i(\bR \phi'_\ast \iota_!\Z_{V'})\\
=\Hs^i(\psi_\ast\bR \phi'_\ast \iota_!\Z_{V'}) \to \Hs^{i+1}(\bR \phi^{''}_\ast \iota'_! \Z_{V'}=T(F'',F',i+1)\,,
\end{multline*}
as required.
 \end{example}
 
\begin{example} [Topological effective Type 2   mixed SUSY Nori motives]\label{ex:effNori2}

 A   representation similar to that of the Example \ref{ex:effNori1} can be done in the case of Type 2 SUSY Nori diagrams.

We start with a pre-stable supercurve $(f\colon\Xcal\to\Sc, \{\Xcal_i\},\{\Zc_j\}, \bar\delta)$ and a superscheme morphism $\phi\colon \Xcal \to \Ycal$, where $\Ycal$ is proper and smooth, and $\beta\in A_1(\Ycal)$.
 Given a vertex $(\Sc_1,\Sc_2,i)$, where $\Sc_2\hookrightarrow\Sc_1$ is a closed embedding of $\beta$-good subsuperschemes of $\Sc$, we can consider the induced closed immersion $\delta_{12}\colon \Xcal_{\vert \Sc_1}\hookrightarrow \Xcal_{\vert \Sc_2}$. We then have an exact sequence of sheaves of abelian groups
$$
0 \to \iota_! \Z_{V_{12}}\to \Z_{\Xcal_{\vert \Sc_1}}\to \delta_{12,\ast}\Z_{\Xcal_{\vert \Sc_2}}\to 0
$$
where $\iota\colon V_{12}=\Xcal_{\vert \Sc_1}\setminus \Xcal_{\vert \Sc_2}\hookrightarrow \Xcal_{\vert \Sc_1}$ is the induced open immersion.

We now assign to the vertex $(\Sc_1,\Sc_2,i)$ the abelian group 
$$
T(\Sc_1,\Sc_2,i)= \Hs^i (\bR \phi_\ast (\iota_! \Z_{V_{12}}))\,.
$$
 One can see as in Example \ref{ex:effNori1} that this induces a representation
 $$
 T\colon D_{\mbox{\rm \tiny eff}}(F_{\Sc,f,\phi,\beta}) \to \text{$\Z$-mod}
 $$
 in the sense of \cite[Def. 7.1.4]{HuM-St17}.
\end{example}

 \begin{example} [Geometrical effective Type 2 SUSY mixed SUSY Nori motives.]\label{ex:effNorigeom2}
Take, as in Example \ref{ex:effNori2}, data $F_{\Sc,f,\phi,\beta}$, where $(f\colon\Xcal\to\Sc, \{\Xcal_i\},\{\Zc_j\}, \bar\delta)$ is a pre-stable supercurve,  $\phi\colon \Xcal \to \Ycal$ is a superscheme morphism to a proper and smooth superscheme $\Ycal$, and $\beta\in A_1(\Ycal)$.
 If $\Sc_1\hookrightarrow \Sc$ is a $\beta$-good superscheme, we consider the object of the derived category $D(\Ycal)$ given by
$$
\Tc_1=\bR \phi_{1\ast}(\omega_{f_1})=\bR \phi_{1\ast}(f_1^!\Oc_{\Sc_1}[-1])\,,
$$
where $\phi_1$ and $f_1$ are the restrictions of $\phi$ and $f$ to $\Xcal_1= \Xcal_{\Sc_1}$.
 
 Now, given a vertex $(\Sc_1,\Sc_2,i)$ where $\delta_{12}\colon \Sc_2\hookrightarrow\Sc_1$ is a closed embedding of $\beta$-good subsuperschemes of $\Sc$, applying Proposition \ref{prop:arbitrarybc} on duality base change, one has:
$$
\Tc_2[1]=\bR \phi_{2\ast}(f_2^!\Oc_{\Sc_2})\simeq \bR  \phi_{1\ast}(\delta_{12,\ast}(f_2^!\Oc_{\Sc_2}))\simeq \bR\phi_{1\ast}\delta_{12,\ast} (\bL\delta_{12}^\ast f_1^!\Oc_{\Sc_1})\,,
$$
where we have also denoted by $\delta_{12}$ the induced closed immersion $\Xcal_2\hookrightarrow \Xcal_1$. Then, the natural morphism $f_1^!\Oc_{\Sc_1} \to \delta_{12,\ast} (\bL\delta_{12}^\ast f_1^!\Oc_{\Sc_1})$ induces a morphism $\lambda_{12}\colon \Tc_1\to\Tc_2$ in $D(\Ycal)$.
There is an exact triangle
 $$
 \Tc_1\xrightarrow{\lambda_{12}}\Tc_2 \xrightarrow{\alpha_{12}}\Tc_{12} \xrightarrow{\beta_{12}} \Tc_1[1]\,,
 $$
 where $\Tc_{12}=\operatorname{Cone}(\lambda_{12})$.
 
 We now assign to the vertex  $(\Sc_1,\Sc_2,i)$ the abelian super group
 $$
 T(\Sc_1,\Sc_2,i) = \Hs^i(\Tc_{12})\,.
 $$

One can see straithforwardly that given  two vertices $(\Sc_1,\Sc_2,i)$, $(\Sc'_1,\Sc'_2,i)$ as in (2.a) of Definition \ref{def:Noridiag2}, one has a morphism $ T(\Sc_1,\Sc_2,i)\to T(\Sc'_1,\Sc'_2,i)$.
 
 Regarding (2.b) of Definition \ref{def:Noridiag2}, let us consider  closed immersions  $\delta_{23}\colon \Sc_3\hookrightarrow \Sc_2$, $\delta_{12}\colon \Sc_2\hookrightarrow \Sc_1$ 
 of $\beta$-good subsuperschemes of $\Sc$, and let $\delta_{13}\colon \Sc_3\hookrightarrow \Sc_1$ be the composition. We then have morphisms
 $$
 \Tc_{23}\xrightarrow{\beta_{23}}\Tc_2[1]\xrightarrow{\alpha_{12}[1]}\Tc_{12}[1]\,,
 $$
 and hence a supergroup morphism
 $$
 T(\Sc_2,\Sc_3,i) \to T(\Sc_1,\Sc_2,i+1) 
 $$
 as required. 
Then $T$ is a representation 
 $$
 T\colon D_{\mbox{\rm \tiny eff}}(F_{\Sc,f,\phi,\beta}) \to \text{Abelian supergroups}=\text{$\Z_2$-graded $\Z$-mod}
 \,.$$
\end{example}

\def\cprime{$'$}

\bigskip

\address{Departamento de Matem\'atica, Instituto de Ci\^encias Exatas,
Universidade Federal de Minas Gerais, Av.~Ant\^onio Carlos 6627, Pampulha,
Belo Horizonte, 31270-901 MG, Brazil;
IGAP (Institute for Geometry and Physics), Trieste, Italy \\
\email{\indent ubruzzo@ufmg.br}}

\address{Departamento de Matem\'aticas and IUFFYM (Instituto Universitario de F\'{\i}sica Fundamental y Matem\'a\-ticas), Universidad de Salamanca, Plaza de la Merced 1-4, 37008 Salamanca, Spain; 
Real Academia de Ciencias Exactas, F\'{i}sicas y Naturales, Spain\\
\email{\indent ruiperez@usal.es}}

\address{Max-Planck-Institut f\"ur Mathematik, 
	Vivatsgasse 7, 53111 Bonn, Germany\\

\end{document}